\documentclass[twoside]{article}
\usepackage[cp1251]{inputenc}
\usepackage[T2A]{fontenc}
\usepackage{array}
\usepackage{amssymb}
\usepackage{amsmath}
\usepackage{amsthm}
\usepackage{latexsym}
\usepackage{bm}
\usepackage{enumerate}
\usepackage[dvips]{graphicx}
\usepackage{epsf}
\usepackage{wrapfig}
\usepackage{euscript}
\usepackage[english,russian]{babel}
\usepackage{textcomp}
\newtheorem{theorem}{Theorem}
\newtheorem{lemma}{Lemma}
\newtheorem{definition}{Definition}

\newtheorem{example}{Example}
\newtheorem{corollary}{Corollary}
\oddsidemargin=0mm \evensidemargin=0mm \textwidth=132mm
\topmargin=0mm \headheight=10mm \headsep=5mm \textheight=200mm
\begin{document}
{\selectlanguage{english}
\binoppenalty = 10000 %
\relpenalty   = 10000 %

\pagestyle{headings} \makeatletter
\renewcommand{\@evenhead}{\raisebox{0pt}[\headheight][0pt]{\vbox{\hbox to\textwidth{\thepage\hfill \strut {\small Grigory. K. Olkhovikov}}\hrule}}}
\renewcommand{\@oddhead}{\raisebox{0pt}[\headheight][0pt]{\vbox{\hbox to\textwidth{{On generalized Van-Benthem-type characterizations}\hfill \strut\thepage}\hrule}}}
\makeatother

\title{On generalized Van-Benthem-type characterizations}
\author{Grigory K. Olkhovikov\\ Dept of Philosophy II, Ruhr-Universit\"{a}t Bochum\\Dept. of Philosophy, Ural Federal University
\\
tel.: +49-15123307070\\
email: grigory.olkhovikov@rub.de, grigory.olkhovikov@gmail.com}
\date{}
\maketitle

\begin{quote}
{\bf Abstract.} The paper continues the line of \cite{Ol13},
\cite{Ol14}, and \cite{Ol15}. This results in a model-theoretic
characterization of expressive powers of arbitrary finite sets of
guarded connectives of degree not exceeding $1$ and regular
connectives of degree $2$ over the language of bounded lattices.
\end{quote}

\begin{quote}
{\bf Keywords.} model theory, modal logic, intuitionistic logic,
propositional logic, asimulation, bisimulation, Van Benthem's
theorem.
\end{quote}

This paper is a further step in the line of our enquiries into the
expressive powers of intuitionistic logic and its extensions. This
line started in late 2011, when we began to think about possible
modifications of bisimulation relation in order to obtain the full
analogue of Van Benthem modal characterization theorem for
intuitionistic propositional logic. For the resulting
modification, which was published in \cite{Ol13}, we came up with
a term ``asimulation'', since one of the differences between
asimulations and bisimulations was that asimulations were not
symmetrical.

Later we modified and extended asimulations in order to capture
the expressive powers of first-order intuitionistic logic (in
\cite{Ol14}) and some variants of basic modal intuitionistic logic
(in \cite{Ol15}) viewed as fragments of classical first-order
logic. Some other authors were also working in this direction;
e.g. in \cite{Badia2015} this line of research is extended to
bi-intuitionistic propositional logic, although the author prefers
directed bisimulations to asimulations.

In this paper we publish a general algorithm allowing for an easy
computation of asimulation-like notions for a class of fragments
of classical first-order logic that can be naturally viewed as
induced by some kind of intensional propositonal logic via the
corresponding notion of standard translation. The group of
appropriate intensional logics includes all of the above mentioned
logics (except, for obvious reasons, the first-order
intuitionistic logic) but also many other formalisms. It is worth
noting that not all of these formalisms are actually extensions of
intuitionistic logic, in fact, even the classical modal
propositional logic which is the object of the original Van
Benthem modal characterization theorem\footnote{For its
formulation see, e.g. \cite[Ch.1, Th. 13]{Blackburn2006}.}, is
also in this group. Thus the generalized asimulations defined in
this paper have an equally good claim to be named generalized
bisimulations, and if we still continue to call them asimulations,
we do it mainly because for us these relations and their use are
inseparable from the above-mentioned earlier results on the
expressive power of intuitionistic logic.

The rest of this paper has the following layout. Section
\ref{S:Prel} fixes the main preliminaries in the way of notation
and definition. In Section \ref{S:bool} we give some simple facts
about Boolean functions and define the notion of a standard
fragment of correspondence language. In Section \ref{S:Asim} we do
the main technical work preparing the `easy' direction of our
generalization of Van Benthem modal characterization theorem and
define our central notion of (generalized) asimulation.  In
Section \ref{S:Sat} we do the technical work for the `hard'
direction which mainly revolves around the properties of
asimulations over $\omega$-saturated models. Section \ref{S:Main}
contains the proof of the result itself, and Section \ref{S:Con}
gives conclusions, discusses the limitations of the result
presented and prospects for future research.

\section{Preliminaries}\label{S:Prel}

We consider the correspondence language, which is a first-order
language without identity over the vocabulary $\Sigma =
\{\,R^2_1,\ldots R^2_n,\ldots, P_1^1,\ldots P_n^1,\ldots\,\}$. A
\emph{formula} is a formula of the correspondence language. A
model of the correspondence language is a classical first-order
model of signature $\Sigma$. We refer to correspondence formulas
with lower-case Greek letters $\theta$, $\tau$, $\varphi$, $\psi$,
and $\chi$, and to sets of correspondence formulas with upper-case
Greek letters $\Gamma$ and $\Delta$.

We will use items from the following list to denote individual
variables:

$$
x_1, y_1, z_1, w_1,\ldots, x_n, y_n, z_n, w_n,\ldots
$$

We will write $x, y, z, w$ as a shorthand for $x_1, y_1, z_1,
w_1$.

We denote the set of natural numbers by $\omega$.

If $\varphi$ is a correspondence formula, then we associate with
it the following vocabulary $\Sigma_\varphi \subseteq \Sigma$ such
that $\Sigma_\varphi = \{\,R_1,\ldots, R_n, \ldots\,\} \cup
\{\,P_i \mid P_i \text{ occurs in }\varphi\,\}$. More generally,
we refer with $\Theta$ to an arbitrary subset of $\Sigma$ such
that $\{\,R_1,\ldots, R_n, \ldots\,\} \subseteq \Theta$. If $\psi$
is a formula and every predicate letter occurring in $\psi$ is in
$\Theta$, then we call $\psi$ a $\Theta$-formula.

We refer to a sequence $o_1,\dots, o_n$ of any objects as
$\bar{o}_n$. We identify a sequence consisting of a single element
with this element. If all free variables of a formula $\varphi$
(formulas in $\Gamma$) occur in $\bar{x}_n$, we write
$\varphi(\bar{x}_n)$ ($\Gamma(\bar{x}_n)$).

We use the following notation for models of classical predicate
logic:
\[
M = \langle U, \iota\rangle, M_1 = \langle U_1, \iota_1\rangle,
M_2 = \langle U_2, \iota_2\rangle,\ldots , M' = \langle U',
\iota'\rangle, M'' = \langle U'', \iota''\rangle,\ldots,
\]
where the first element of a model is its domain and the second
element is its interpretation of predicate letters. If $k \in
\omega$ then we write $R^k_n$ as an abbreviation for
$\iota_k(R_n)$. If $a \in U$ then we say that $(M, a)$ is a
pointed model. Further, we say that $\varphi(x)$ is true at $(M,
a)$ and write $M, a \models \varphi(x)$ iff for any variable
assignment $\alpha$ in $M$ such that $\alpha(x) = a$ we have $M,
\alpha \models \varphi(x)$. It follows from this convention that
the truth of a formula $\varphi(x)$ at a pointed model is to some
extent independent from the choice of its only free variable.
Moreover, for $k \in \omega$ we will sometimes write $a \models_k
\varphi(x)$ instead of $M_k, a \models \varphi(x)$.

In what follows we will need a notion of \emph{$k$-ary guarded
$x$-connective} ($k$-ary $x$-g.c.) for a given variable $x$ in the
correspondence language. Such a connective is a formula
$\varphi(x)$ of a special form, which we define inductively as
follows:

1. $\mu = \psi(P_1(x),\ldots, P_k(x))$ is a $k$-ary guarded
$x$-connective of degree $0$ iff $\psi(P_1(x),\ldots, P_k(x))$ is
a Boolean combination of $P_1(x),\ldots, P_k(x)$, that is to say,
a formula built from $P_1(x),\ldots, P_k(x)$ using only $\wedge,
\vee$, and $\neg$. In this case $\mu$ is neither $\forall$-guarded
nor $\exists$-guarded.

2. If $\mu^-$ is a $k$-ary guarded $x_{m + 1}$-connective of
degree $n$, and $\mu^-$ is not $\forall$-guarded, then, for
arbitrary $S_1,\ldots, S_m \in \{\,R_1,\ldots, R_n, \ldots\,\}$,
formula

\begin{equation}\label{E:Aguard}\tag{\text{$\forall$-guard}}
\forall x_2\ldots x_{m + 1}((\bigwedge^{m}_{i = 1}(S_i(x_i,x_{i +
1})) \to \mu^-)
\end{equation}

is a $k$-ary $\forall$-guarded $x$-connective of degree $n + 1$,
provided that it is not equivalent to a $k$-ary guarded
$x$-connective of a smaller degree.

3. If $\mu^-$ is a $k$-ary guarded $x_{m + 1}$-connective of
degree $n$, and $\mu^-$ is not $\exists$-guarded, then, for
arbitrary $S_1,\ldots, S_m \in \{\,R_1,\ldots, R_n, \ldots\,\}$,
formula

\begin{equation}\label{E:Eguard}\tag{\text{$\exists$-guard}}
\exists x_2\ldots x_{m + 1}(\bigwedge^{m}_{i = 1}(S_i(x_i,x_{i +
1})) \wedge \mu^-)
\end{equation}

is a $k$-ary $\exists$-guarded $x$-connective of degree $n + 1$,
provided that it is not equivalent to a $k$-ary guarded
$x$-connective of a smaller degree.

Thus degree of a $k$-ary $x$-g.c. is just the number of quantifier
alternations in it. Degree of a $k$-ary $x$-g.c. $\mu$ we will
abbreviate as $\delta(\mu)$. The modality $\mu^-$ mentioned in the
above definition is called the immediate ancestor of $\mu$. If
$\delta(\mu) > 0$, then $\mu^-$ always exists, and, moreover, we
have $\delta(\mu^-) = \delta(\mu) - 1$. Taking the transitive
closure of immediate ancestry relation, we obtain that for every
$k$-ary $x$-g.c. $\mu$ there exist $\delta(\mu)$ ancestors, which
we will denote $\mu^0,\ldots, \mu^{\delta(\mu) - 1}$ respectively,
assuming that in this sequence the $x$-g.c.'s with smaller
superscripts are ancestors also of the $x$-g.c.'s with bigger
superscripts, so that $\mu^- = \mu^{\delta(\mu) - 1}$, $(\mu^-)^-
= \mu^{\delta(\mu) - 2}$, etc. In this sequence every $\mu^i$ is a
guarded connective of degree $i$, so that $\mu^0$ always defines a
Boolean function for the corresponding set of atoms. For a given
$\mu$, we will call $\mu^0$ the \emph{propositional core} of
$\mu$.

Since in this paper we are interested in the expressive powers of
guarded connectives, we will lump together different guarded
connectives which are equivalent as formulas in the correspondence
language, treating them as one and the same connective. In the
same spirit, we are not going to distinguish between Boolean
functions which are equivalent as formulas, and only differ from
one another due to presence of vacuous variables. Thus every
$m$-ary Boolean function under this angle of view \emph{is} at the
same time an $n$-ary Boolean function for $m \leq n$ for which $n
- m$ extra variables are treated as vacuous.

\begin{example}
{\em
We list some examples of $x$-g.c.'s:

$1$. Standard connectives $\bot$, $\top$, $\neg$, $\wedge$,
$\vee$, $\to$, $\leftrightarrow$ in their classical reading are
all, when applied to $P_1(x)$ and $P_2(x)$, examples of $x$-g.c.'s
of degree $0$ and of corresponding arity.

$2$. Examples of unary $\forall$-guarded $x$-g.c.'s are:

$$
\lambda_1 = \forall y(R_1(x,y) \to P_1(y));
$$
$$
\lambda_2 = \forall yz(R_1(x,y) \wedge R_2(y,z)) \to P_1(z));
$$
$$
\lambda_3 = \forall y(R_1(x,y) \to\exists z(R_3(y,z) \wedge
P_1(z))).
$$
The last of these g.c's has degree $2$, the others have degree
$1$.

$3$. The following formula is an example of unary
$\exists$-guarded $x$-g.c. of degree $1$:
$$
\lambda_4 = \exists y(R_1(x,y) \wedge P_1(y)).
$$

$4$. Finally, an example of binary $\forall$-guarded $x$-g.c. of
degree $1$:
$$
\lambda_5 = \forall y(R_1(x,y) \to (\neg P_1(y) \vee P_2(y))).
$$
}
\end{example}

In what follows we will frequently encounter the long conjunctions
similar to those in the definition of guarded connective above.
Therefore, we introduce for them special notation. If $k < l$,
$z_k,\ldots, z_{l + 1}$ variables, and $S_k,\ldots, S_m \in
\{\,R_1,\ldots, R_n, \ldots\,\}$, then we abbreviate
$\bigwedge^l_{i = k}(S_i(z_i,z_{i + 1}))$ by $\pi^l_kSz$.
Similarly, if $c_k,\ldots, c_{l + 1}$ is a sequence of elements of
$U_r$, then we abbreviate $\bigwedge^l_{i =
k}(\iota_r(S_i)(c_i,c_{i + 1}))$ by $\pi^l_kS^rc$. In this
notation, the above formulas \eqref{E:Aguard} and \eqref{E:Eguard}
will look as

$$
\forall x_2\ldots x_{m + 1}(\pi^{m}_1Sx \to \mu^-),
$$

and

$$
\exists x_2\ldots x_{m + 1}(\pi^{m}_1Sx \wedge \mu^-),
$$
respectively.

It is obvious, that for natural $r,s$ such that $r < s$, every
$r$-ary $x$-g.c. is an $s$-ary $x$-g.c.

A guarded $x$-connective ($x$-g.c.) is a $k$-ary guarded
$x$-connective for some $k \geq 0$.

If $\varphi_1(z),\ldots, \varphi_k(z)$ are formulas in the
correspondence language, each with a single free variable, and
$\mu$ is a $k$-ary $x$-g.c., then we call the \emph{application of
$\mu$ to $\varphi_1,\ldots, \varphi_k$} the result of replacing
every formula in $P_1(w),\ldots, P_k(w)$ for some variable $w$ in
$\mu$ by formulas $\varphi_1(w),\ldots, \varphi_k(w)$,
respectively, and we denote the resulting formula by
$\mu(\varphi_1,\ldots, \varphi_k)$.

If $x$ is a variable in the correspondence language, then we say
that the set $\mathcal{L}^\Theta_x(\mathbb{M})$ of formulas in
variable $x$ is a \emph{guarded $x$-fragment} of the
correspondence language iff $\mathbb{M}$ is a set of $x$-g.c's and
$\mathcal{L}^\Theta_x(\mathbb{M})$ is the least set of
$\Theta$-formulas, such that $P_n(x) \in
\mathcal{L}^\Theta_x(\mathbb{M})$ for every $P_n \in \Theta$, and
$\mathcal{L}^\Theta_x(\mathbb{M})$ is closed under applications of
$x$-g.c's from $\mathbb{M}$. If $\mathbb{M} = \{ \mu_1,\ldots,
\mu_s \}$ is a finite set of $x$-g.c.'s, then we write
$\mathcal{L}^\Theta_x(\mathbb{M})$ as $\mathcal{L}_x(\mu_1,\ldots,
\mu_s)$.

\begin{example}
{\em

We list some examples of guarded $x$-fragments using the notation
of the previous example:
\begin{enumerate}
\item $\mathcal{L}^\Sigma_x(\wedge, \vee, \bot, \top, \lambda_5)$
is the set of all standard $x$-translations of propositional
intuitionistic formulas.

\item $\mathcal{L}^\Sigma_x(\wedge, \vee, \bot, \top, \neg,
\lambda_1, \lambda_4)$ is the set of all standard $x$-translations
of propositional modal formulas.

\item $\mathcal{L}^\Sigma_x(\wedge, \vee, \bot, \top, \lambda_2,
\lambda_3, \lambda_5)$ is the set of all standard $x$-translations
of propositional modal intuitionistic formulas, in case we assume
for intuitionistic modal logic the type of Kripke semantics
defined by clauses ($\Box_2$) and ($\Diamond_2$) of \cite[Section
4]{AlShk06}.
\end{enumerate}
}
\end{example}

\section{Standard fragments and classification of Boolean
functions}\label{S:bool}

In this paper we are interested in characterizing the expressive
powers of guarded $x$-fragments $\mathcal{L}^\Theta_x(\mathbb{M})$
of the correspondence language, such that $\{ \wedge, \vee, \top,
\bot \} \subseteq \mathbb{M}$, by means of an invariance with
respect to a suitable class of binary relations. Therefore, we
need to define the respective type of invariance property:
\begin{definition}\label{D:invariance}
{\em Let $\alpha$ be a class of relations such that for any $A \in
\alpha$ there is a $\Theta$ and there are $\Theta$-models $M_1$
and $M_2$ such that the following condition holds:
\begin{align}
&A \subseteq (U_1 \times U_2) \cup (U_2\times
U_1)\label{E:c-typ}\tag{\text{type}}
\end{align}
Then a formula $\varphi(x)$ is said to be \emph{invariant with
respect to $\alpha$}, iff for every $A \in \alpha$ for the
corresponding $\Theta$-models $M_1$ and $M_2$, and for arbitrary
$a \in U_1$ and $b \in U_2$ it is true that:
\[
(a\mathrel{A}b \wedge a \models_1 \varphi(x)) \Rightarrow b
\models_2 \varphi(x).
\]
}
\end{definition}
The above definition defines formula invariance under rather
special conditions. However, these conditions will hold for all
the binary relations to be considered below, therefore this
definition suits our purposes.

If a formula is invariant w.r.t. a singleton $\{ A \}$, we simply
say that a formula is invariant w.r.t. $A$. Clearly, a formula is
invariant w.r.t. a class $\alpha$ iff this formula is invariant
w.r.t. every $A \in \alpha$. We say that a set $\Gamma(x)$ is
invariant w.r.t. $\alpha$ iff every formula in $\Gamma(x)$ is
invariant w.r.t. $\alpha$.

Our purpose in the present paper, therefore, is to give an
algorithm which, for a given guarded $x$-fragment
$\mathcal{L}^\Theta_x(\mathbb{M})$ of the correspondence language
would compute a definition of a class of binary relations such
that a formula $\psi(x)$ of the correspondence language is
equivalent to a formula in $\mathcal{L}^\Theta_x(\mathbb{M})$ iff
it is invariant w.r.t. this class. Members of the respective
classes of binary relations we will call \emph{asimulations}.

However, one can expect that not all guarded fragments are equally
amenable to such a treatment. Therefore, the rest of this section
is devoted to isolating some special well-behaved subsets of the
class of guarded connectives. The guarded fragments generated by
such well-behaved subsets we are going to designate as `standard'
ones and claim them as the proper scope of the general algorithm
mentioned in the previous paragraph.

The definition of a guarded connective suggests $2$ natural
rubrics for their classification, the one according to their
degree and the other according to the type of Boolean function
defined by their propositional core. We are now going to look
closer into the latter.

First let us mention the natural order on the doubleton set of
classical truth values: $0 < 1$. This order induces the natural
order on the $n$-tuples of truth values for which we have
$\bar{a}_n \leq \bar{b}_n$ iff for every $i$ between $1$ and $n$
we have $a_i \leq b_i$ as truth values. We now define the
following types of Boolean functions:

\begin{enumerate}
\item \emph{Monotone} functions. A Boolean $n$-ary function $f$ is
monotone iff for all $n$-tuples of truth values $\bar{a}_n$ and
$\bar{b}_n$ we have
$$
\bar{a}_n \leq \bar{b}_n \Rightarrow f(\bar{a}_n) \leq
f(\bar{b}_n).
$$

\item \emph{Anti-monotone} functions. A Boolean $n$-ary function
$f$ is anti-monotone iff for all $n$-tuples of truth values
$\bar{a}_n$ and $\bar{b}_n$ we have
$$
\bar{a}_n \leq \bar{b}_n \Rightarrow f(\bar{a}_n) \geq
f(\bar{b}_n).
$$

\item \emph{Rest} functions. A Boolean function is a rest function
iff it is neither monotone nor anti-monotone.

\item $TFT$-functions. A Boolean $n$-ary function $f$ is a
$TFT$-function iff there exist three $n$-tuples of truth values
$\bar{a}_n$, $\bar{b}_n$, and $\bar{c}_n$ such that (1) $\bar{a}_n
< \bar{b}_n < \bar{c}_n$, and (2) $f(\bar{a}_n) = f(\bar{c}_n) =
1$, whereas $f(\bar{b}_n) = 0$.

\item $FTF$-functions. A Boolean $n$-ary function $f$ is an
$FTF$-function iff there exist three $n$-tuples of truth values
$\bar{a}_n$, $\bar{b}_n$, and $\bar{c}_n$ such that (1) $\bar{a}_n
< \bar{b}_n < \bar{c}_n$, and (2) $f(\bar{a}_n) = f(\bar{c}_n) =
0$, whereas $f(\bar{b}_n) = 1$.

\end{enumerate}

Note that under this reading the class of monotone functions has a
non-empty intersection with the class of anti-monotone functions,
which consists of constant functions. Further, note that all
$TFT$-functions and $FTF$ functions are \emph{ex definitione} rest
functions. The rest functions which are \emph{not} $TFT$-functions
we will call $\forall$\emph{-special}. Similarly, the rest
functions which are \emph{not} $FTF$-functions we will call
$\exists$\emph{-special}.\footnote{A rest function can be neither
$\forall$- nor $\exists$-special. Take, for instance, $p_1
\leftrightarrow p_2 \leftrightarrow p_3$ and consider the
following series of tuples:
$$
(0,0,0) < (1,0,0) < (1,1,0) < (1,1,1).
$$
However, it is impossible for a rest function to be both
$\forall$-special and $\exists$-special. Indeed, if $f$ is a rest
function then take $\bar{a}_n$, $\bar{b}_n$, $\bar{c}_n$,
$\bar{d}_n$ such that $\bar{a}_n < \bar{b}_n$ and $\bar{c}_n <
\bar{d}_n$ for which we have
$$
f(\bar{a}_n) = f(\bar{d}_n) = 0; f(\bar{b}_n) = f(\bar{c}_n) = 1.
$$
Since $f(\bar{a}_n) \neq f(\bar{c}_n)$, we must have $\bar{a}_n
\neq \bar{c}_n$. Therefore, if $\bar{a}_n$ and $\bar{c}_n$ are
comparable, then we must have either $\bar{a}_n < \bar{c}_n$ or
$\bar{c}_n < \bar{a}_n$. In the former case the sequence
$(\bar{a}_n, \bar{c}_n, \bar{d}_n)$ shows that $f$ is an
$FTF$-function, whereas in the latter case the sequence
$(\bar{c}_n, \bar{a}_n, \bar{b}_n)$ shows that $f$ is a
$TFT$-function. Finally, if $\bar{a}_n$ and $\bar{c}_n$ are
incomparable, one has to consider $\bar{e}_n = min(\bar{a}_n,
\bar{c}_n)$. We must have then $\bar{e}_n < \bar{a}_n, \bar{c}_n$,
and, depending on the value of $f(\bar{e}_n)$, we get that either
the sequence $(\bar{e}_n, \bar{c}_n, \bar{d}_n)$ verifies that $f$
is an $FTF$-function or the sequence $(\bar{e}_n, \bar{a}_n,
\bar{b}_n)$ verifies that $f$ is a $TFT$-function.} Further,
Boolean functions (not necessarily rest functions) which are
\emph{not} $FTF$-functions we will call \emph{weakly}
$\exists$\emph{-special}, whereas Boolean functions (again, not
necessarily rest functions) which are \emph{not} $TFT$-functions
we will call \emph{weakly} $\forall$\emph{-special}. Thus, every
non-rest function is both weakly $\forall$-special and weakly
$\exists$-special.

Before we go any further, we need to introduce a more convenient
notation for subclasses of guarded connectives which emphasizes
both the structure of their quantifier prefix and the type of
their propositional core. Classes of $x$-g.c.'s will be denoted by
expressions of the form $\nu_x(Q_1 \ldots Q_k, f)$ where $f$ is a
Boolean function and $Q_1 \ldots Q_k$ is a possibly empty sequence
of alternating quantifiers from the set $\{ \forall, \exists \}$.
Thus, $\nu_x(\emptyset, f)$ denotes a class of all $x$-g.c.'s of
degree $0$ which define a Boolean function $f$ for their atomic
components. If the meaning of $\nu_x(Q_1 \ldots Q_k, f)$ is
already defined, and $Q \in \{ \forall, \exists \}$ is such that
$Q \neq Q_1$, we further define that  $\nu_x(QQ_1 \ldots Q_k, f)$
is the class of all $Q$-guarded $x$-g.c.'s $\mu$ for which we have
$\mu^- \in \nu_x(Q_1 \ldots Q_k, f)$.

One has to note that at least in case of constants this notation
can be misleading since we have for example
$$
\nu_x(\forall, \top) = \{ \top \}; \nu_x(\exists, \bot) = \{ \bot
\},
$$
and thus these two classes are not actually classes of g.c.'s of
degree $1$. And in general the classes $\nu_x(Q_1 \ldots
Q_k\exists, \bot)$ and $\nu_x(Q_1 \ldots Q_k\forall, \top)$ always
coincide with the classes $\nu_x(Q_1 \ldots Q_k, \bot)$ and
$\nu_x(Q_1 \ldots Q_k, \top)$, respectively. Therefore, we omit
classes of the forms $\nu_x(Q_1 \ldots Q_k\exists, \bot)$ and
$\nu_x(Q_1 \ldots Q_k\forall, \top)$ from our classification.

This phenomenon, however, does not seem to arise with the other
pieces of the introduced notation: for guarded connectives with
non-constant $f$ the length of quantifier prefix in $\nu_x(Q_1
\ldots Q_k, f)$ is precisely the same as the degree of the
elements of $\nu_x(Q_1 \ldots Q_k, f)$ and different such $\nu$'s
denote different and disjoint classes of guarded
connectives.\footnote{The only possible difficulty seems to
concern the vacuous arguments of Boolean functions. We avoid this
by lumping together Boolean functions defined by logical
equivalents.} This is certainly so for the classes of guarded
connectives of degree not exceeding $2$ which will mostly concern
us below.

With this $\nu$-notation, we can provide a concise description for
further important subclasses of guarded connectives. We define
that an $x$-g.c. $\mu$ is \emph{special} iff for some variable
$x$, $\mu$ is in the class $\nu_x(Q, f)$ and $f$ is $Q$-special.
Moreover, we define, that an $x$-g.c. $\mu$ is \emph{weakly
special} iff $\mu \in \nu_x(Q, f)$ and $f$ is weakly $Q$-special.

We now want to designate the following special classes of guarded
connectives:

\begin{enumerate}
\item \emph{Flat connectives} are guarded connectives of degree
less or equal to $1$.

\item \emph{Modalities} are guarded connectives with a
propositional core, defining a non-constant Boolean function which
is either monotone or anti-monotone.

\item \emph{Regular connectives} are guarded connectives with a
weakly special ancestor of degree $1$ and a non-constant
propositional core.

\end{enumerate}

It is easy to see that these three classes have pairwise non-empty
intersection. For example, the intersection of flat connectives
and regular connectives is the class of all weakly special
connectives, and given that propositional cores of modalities are
both weakly $\forall$-special and weakly $\exists$-special, we get
that every modality is regular.

Before we turn to the definition of a standard fragment, we
collect some facts about Boolean functions:

\begin{lemma}\label{L:Boolean}
Let $f(p_1,\ldots, p_n)$ be a Boolean function. Then:
\begin{enumerate}
\item If $f$ is non-constant monotone, then it is expressible as
$F(p_1,\ldots, p_n)$, where $F$ is a superposition of $\wedge$'s
and $\vee$'s.

\item If $f$ is non-constant monotone, then $f(p_1,\ldots, p_1)$
is just $p_1$.

\item If $f$ is non-constant anti-monotone, then it is expressible
as $\neg F(p_1,\ldots, p_n)$, where $F$ is a superposition of
$\wedge$'s and $\vee$'s.

\item If $f$ is non-constant anti-monotone, then $f(p_1,\ldots,
p_1)$ is just $\neg p_1$.

\item If $f$ is a $TFT$-function, then, for some $A_1,\ldots, A_n
\in \{ p_1 , p_2, p_1 \vee p_2, \bot, \top \}$, $f(A_1,\ldots,
A_n)$ is equivalent to $p_1 \to p_2$.

\item If $f$ is an $FTF$-function, then, for some $A_1,\ldots, A_n
\in \{ p_1 , p_2, p_1 \wedge p_2, \bot, \top \}$, $f(A_1,\ldots,
A_n)$ is equivalent to $p_1 \wedge \neg p_2$.

\item If $f$ is a rest function, then for some $B_1,\ldots, B_n,
C_1,\ldots, C_n \in \{ p_1, \bot, \top \}$ we have both
$f(B_1,\ldots, B_n)$ equivalent to $p_1$ and $f(C_1,\ldots, C_n)$
equivalent to $\neg p_1$.

\item If $f$ is a non-constant non-$FTF$ function, then $f$ admits
of representation in the following form:
$$
(p^1_1\wedge,\ldots, \wedge p^1_{i_1}) \vee, \ldots, \vee
(p^k_1\wedge,\ldots, \wedge p^k_{i_k}) \vee (\neg
q^1_1\wedge,\ldots, \wedge \neg q^1_{j_1}) \vee, \ldots, \vee
(\neg q^m_1\wedge,\ldots, \wedge \neg q^m_{j_m})
$$
where $m, k \geq 0$,  $m + k > 0$, $i_1,\ldots, i_k, j_1,\ldots,
j_m > 0$ and
$$
p^1_1,\ldots, p^1_{i_1}, \ldots, p^k_1,\ldots, p^k_{i_k},
q^1_1,\ldots, q^1_{j_1} \in \{ p_1,\ldots, p_n \}.
$$

\item If $f$ is a non-constant non-$TFT$ function, then $f$ admits
of representation in the following form:
$$
(p^1_1\vee,\ldots, \vee p^1_{i_1}) \wedge, \ldots, \wedge
(p^k_1\vee,\ldots, \vee p^k_{i_k}) \wedge (\neg q^1_1\vee,\ldots,
\vee \neg q^1_{j_1}) \wedge, \ldots, \wedge (\neg
q^m_1\vee,\ldots, \vee \neg q^m_{j_m})
$$
where $m, k \geq 0$,  $m + k > 0$, $i_1,\ldots, i_k, j_1,\ldots,
j_m > 0$ and
$$
p^1_1,\ldots, p^1_{i_1}, \ldots, p^k_1,\ldots, p^k_{i_k},
q^1_1,\ldots, q^1_{j_1} \in \{ p_1,\ldots, p_n \}.
$$
\end{enumerate}
\end{lemma}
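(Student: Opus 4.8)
The plan is to handle the nine items in four blocks, using the two standard dualities---negating the output and negating all inputs---to avoid repeating symmetric arguments.

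\emph{Items 1--4.} For item 1 I would use the monotone normal form. Non-constancy and monotonicity make the true set $T = f^{-1}(1)$ upward closed, so $T = \bigcup\{\,\uparrow\bar a \mid \bar a \text{ minimal in } T\,\}$; for each minimal true point $\bar a$ the pure-positive term $\bigwedge_{i : a_i = 1} p_i$ has true set exactly $\uparrow\bar a$, so the disjunction of these terms is the required superposition of $\wedge$ and $\vee$ (no minimal true point can be $\bar 0$, as that would force $f$ constant, so none of the terms is empty). Item 2 is immediate: monotonicity gives $f(\bar 0)\le f(\bar x)\le f(\bar 1)$, and non-constancy then forces $f(\bar 0)=0$ and $f(\bar 1)=1$, whence $f(p_1,\dots,p_1)$ equals $p_1$. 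Items 3 and 4 follow by applying items 1 and 2 to $\neg f$, which is non-constant monotone exactly when $f$ is non-constant anti-monotone.

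\emph{Items 5--7.} Here I would read the substitutions straight off the witnessing data. For a $TFT$-function fix $\bar a < \bar b < \bar c$ with $f(\bar a)=f(\bar c)=1$ and $f(\bar b)=0$; each coordinate triple $(a_i,b_i,c_i)$ is non-decreasing, hence of exactly one type among $(0,0,0)$, $(0,0,1)$, $(0,1,1)$, $(1,1,1)$. Substituting $\bot$, $p_2$, $p_1\vee p_2$, $\top$ respectively sends the argument tuples for $(p_1,p_2)=(0,0),(1,0),(0,1),(1,1)$ to $\bar a,\bar b,\bar c,\bar c$, so $f(A_1,\dots,A_n)$ agrees with $p_1\to p_2$ at all four assignments. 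Item 6 is the mirror image: for an $FTF$-chain the substitutions $\bot$, $p_1\wedge p_2$, $p_1$, $\top$ send those assignments to $\bar a,\bar b,\bar a,\bar c$ and realize $p_1\wedge\neg p_2$. For item 7, a rest function is neither monotone nor anti-monotone: failure of monotonicity gives $\bar a\le\bar b$ with $f(\bar a)=1$, $f(\bar b)=0$, and putting $p_1$ on the coordinates where $a_i<b_i$ and the constant $a_i$ elsewhere yields $f(C_1,\dots,C_n)=\neg p_1$; failure of anti-monotonicity supplies a pair producing $p_1$ the same way.

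\emph{Items 8 and 9.} These carry the structural content and are the step I expect to be the main obstacle. For item 8 set $T=f^{-1}(1)$; the key observation is that the purely combinatorial non-$FTF$ hypothesis (no true point strictly sandwiched between two false points) is \emph{equivalent} to the dichotomy: for every $\bar b\in T$, either $\downarrow\bar b\subseteq T$ or $\uparrow\bar b\subseteq T$. In the first case $\bigwedge_{i:b_i=0}\neg p_i$ is a pure-negative term with true set $\downarrow\bar b$; in the second $\bigwedge_{i:b_i=1}p_i$ is a pure-positive term with true set $\uparrow\bar b$. Choosing one such term per $\bar b\in T$ and taking their disjunction covers every true point while never stepping outside $T$, so the disjunction equals $f$; non-constancy is exactly what eliminates the degenerate empty conjunctions that would otherwise arise at $\bar 0$ and $\bar 1$. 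Item 9 is the exact mirror---run on $F=f^{-1}(0)$ with $\vee$ and $\wedge$ interchanged---and also follows formally from item 8 applied to the input-negated dual $g(\bar p)=\neg f(\neg\bar p)$, which is non-$FTF$ precisely when $f$ is non-$TFT$. The delicate points throughout items 8 and 9 are verifying this up-set/down-set dichotomy and controlling the empty-clause edge cases at the top and bottom of the cube, where non-constancy must be invoked.
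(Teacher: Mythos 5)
Your proposal is correct, and on parts 1--7 it is essentially the paper's route (the paper simply cites 1--4 as known facts; for 5--7 it, like you, reads explicit substitutions off a witnessing chain, though it first renumbers the variables into ``prefix'' form and then splits into two cases according to the value of $f$ at a fourth tuple, whereas your coordinate-wise classification of the triples $(a_i,b_i,c_i)$ into $(0,0,0),(0,0,1),(0,1,1),(1,1,1)$ gets the substitutions in one step). The genuine divergence is in part 8. The paper partitions the $T$-tuples into ``lower'' ones (below some $F$-tuple, above none) and ``upper'' ones (above some $F$-tuple, below none) and takes one clause per maximal lower and per minimal upper $T$-tuple; you instead extract from non-$FTF$-ness the per-point dichotomy that every $\bar b \in f^{-1}(1)$ has either its whole down-set or its whole up-set inside $f^{-1}(1)$, and take one pure-sign clause per true point. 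Your version produces more clauses (harmless, since the lemma bounds neither $k$ nor $m$ from above) but is actually more robust: a true tuple incomparable to every false tuple falls outside the paper's lower/upper classification and is missed by its minimal/maximal clauses (try $f=\neg p_1\vee p_2$, where $(0,1)$ is comparable to no $F$-tuple), while your per-point construction covers it. Your treatment of the empty-clause degeneracies at $\bar 0$ and $\bar 1$, and the reduction of parts 3, 4, 6 and 9 to their duals by negating outputs and inputs, are both sound.
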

\begin{proof} Parts $1$ through $4$ are all just known basic facts about
Boolean functions. We concentrate on the parts $5$ through $9$.

As for Part $5$, assume that $f$ is a $TFT$-function. Then
(renumbering $p$'s if necessary) for some $1 \leq k < l < m \leq
n$ we must have all of the following:
\begin{align}
(p_1 \wedge \ldots \wedge p_{k -1} \wedge \neg p_k \wedge \ldots
\wedge
\neg p_n) &\to f(p_1,\ldots, p_n);\label{E:bool1}\\
(p_1 \wedge \ldots \wedge p_l \wedge \neg p_{l +1} \wedge \ldots
\wedge
\neg p_n) &\to \neg f(p_1,\ldots, p_n);\label{E:bool2}\\
(p_1 \wedge \ldots \wedge p_m \wedge \neg p_{m +1} \wedge \ldots
\wedge \neg p_n) &\to f(p_1,\ldots, p_n).\label{E:bool3}
\end{align}

We have then one of the two cases: either
$$
(p_1 \wedge \ldots \wedge p_{k -1} \wedge \neg p_k \wedge \ldots
\wedge \neg p_l \wedge  p_{l +1} \wedge \ldots \wedge p_m \wedge
\neg p_{m +1} \wedge \ldots \wedge \neg p_n) \to f(p_1,\ldots,
p_1),
$$
or
$$
(p_1 \wedge \ldots \wedge p_{k -1} \wedge \neg p_k \wedge \ldots
\wedge \neg p_l \wedge  p_{l +1} \wedge \ldots \wedge p_m \wedge
\neg p_{m +1} \wedge \ldots \wedge \neg p_n) \to \neg
f(p_1,\ldots, p_1).
$$

In the first case we set as follows:
$$
A_1, \ldots, A_{k -1} := \top;
$$
$$
A_k, \ldots, A_l := p_1;
$$
$$
A_{l +1}, \ldots, A_m := p_2;
$$
$$
A_{m +1}, \ldots, A_n := \bot.
$$

In the second case the settings are as follows:
$$
A_1, \ldots, A_{k -1} := \top;
$$
$$
A_k, \ldots, A_l := p_1 \vee p_2;
$$
$$
A_{l +1}, \ldots, A_m := p_1;
$$
$$
A_{m +1}, \ldots, A_n := \bot.
$$

One can straightforwardly verify then, that in each of the two
cases the respective settings for $A_1,\ldots, A_n$ give us
$$
f(A_1,\ldots, A_n) \Leftrightarrow p_1 \to p_2.
$$

Part $6$ is just a dual of Part $5$.

As for Part $7$, assume that $f$ is a rest function, that is, $f$
is neither monotone nor anti-monotone. This means that (renaming
$p$'s if necessary) there exist $1 \leq k < l \leq n$ and also $1
\leq k' < l' \leq n$ for which all of the following holds:
\begin{align}
(p_1 \wedge \ldots \wedge p_{k -1} \wedge \neg p_k \wedge \ldots
\wedge
\neg p_n) &\to f(p_1,\ldots, p_n);\label{E:bool4}\\
(p_1 \wedge \ldots \wedge p_l \wedge \neg p_{l +1} \wedge \ldots
\wedge
\neg p_n) &\to \neg f(p_1,\ldots, p_1);\label{E:bool5}\\
(p_1 \wedge \ldots \wedge p_{k' -1} \wedge \neg p_{k'} \wedge
\ldots \wedge \neg p_n) &\to \neg f(p_1,\ldots,
p_1).\label{E:bool6}\\
(p_1 \wedge \ldots \wedge p_{l'} \wedge \neg p_{l' +1} \wedge
\ldots \wedge \neg p_n) &\to f(p_1,\ldots, p_1);\label{E:bool7}
\end{align}
We get then the equivalencies required by Lemma setting as
follows:
$$
B_1, \ldots, B_{k'-1} := \top;
$$
$$
B_{k'}, \ldots, B_{l'} := p_1;
$$
$$
B_{l' +1}, \ldots, B_n := \bot;
$$
$$
C_1, \ldots, C_{k -1} := \top;
$$
$$
C_k, \ldots, C_l := p_1;
$$
$$
C_{l +1}, \ldots, C_n := \bot.
$$

As for Part $8$, assume that $f$ is a non-constant non-$FTF$
Boolean function. Then consider the $n$-tuples of values of
$\bar{p}_n$ for which $f$ is true ($T$-tuples for $f$). Since $f$
is non-$FTF$, then no such tuple can be both below and above some
tuple for which $n$ is false (some $F$-tuple for $f$) in terms of
our induced order among tuples of Boolean values. Therefore we
divide all the $T$-tuples for $f$ into two subsets: (1) lower
$T$-tuples, which are below some $F$-tuple but not above any
$F$-tuple; and (2) upper $T$-tuples, which are above some
$F$-tuple but not below any $F$-tuple. We now set $k$ to be the
number of minimal upper $T$-tuples for $f$ and and $m$ to be the
number of maximal lower $T$-tuples for $f$. Since $f$ is
non-constant, we will obviously have both some $T$-tuples and some
$F$-tuples for $f$, therefore we get that $m + k > 0$, even though
$m$ or $k$ can turn out to be $0$. We now set for $g \leq k$ that
$p^g_1,\ldots, p^g_{i_g}$ is the set of all variables from
$\bar{p}_n$ which are true in the $g$-th minimal upper $T$-tuple,
while for $h \leq m$ we set that $q^h_1,\ldots, q^h_{j_h}$ is the
set of all variables from $\bar{p}_n$ which are false in the
$h$-th maximal lower $T$-tuple.

One straightforwardly verifies that these settings give us the
desired representation for $f$.

Part $9$ is just a dual of Part $8$.
\end{proof}

Now, a guarded $x$-fragment $\mathcal{L}^\Theta_x(\mathbb{M})$ we
call a \emph{standard $x$-fragment} iff $\mathbb{M}$ is a finite
set of flat connectives plus some regular connectives of degree
$2$. The guarded connectives used to generate standard fragments
of the correspondence language we will also call standard
connectives.

It is easy to see, that every $x$-g.c. listed in Example $1$ above
except for $\lambda_3$ is a binary flat connective, and that every
$x$-g.c. from the same Example except for $\lambda_5$ is a
modality. Therefore, given the degrees of these $x$-g.c.'s, every
guarded $x$-fragment of the correspondence language listed in
Example $3$ is a standard $x$-fragment.

\section{Asimulations}\label{S:Asim}

In order to define asimulations we first need to define some
special classes of tuples of binary relations. So let $M_1, M_2$
be $\Theta$-models. We then denote the set of binary relations
satisfying condition \eqref{E:c-typ} for the given $M_1, M_2$ by
$W(M_1, M_2)$. Further, for a set $\Gamma(x)$ of $\Theta$-formulas
we define $Rel(\Gamma(x), M_1, M_2)$ to be the set of all $A \in
W(M_1, M_2)$, such that $\Gamma(x)$ is invariant w.r.t. $A$.

First, we need to handle the propositional cores of guarded
connectives. We bring Boolean functions into correspondence with
the above defined operations on sets of the form $\beta \subseteq
W(M_1, M_2)$ in the following way:
\begin{align*}
 &[f(p_1,\ldots, p_n)](\beta) = \begin{cases}
W(M_1, M_2) & \mbox{if $f$ is constant};\\
\beta & \mbox{if $f$ is non-constant monotone};\\
\beta^{-1} & \mbox{if $f$ is non-constant anti-monotone};\\
\beta\cap\beta^{-1} & \mbox{otherwise.}
\end{cases}
\end{align*}
In the above definition, we assume that:
$$
\beta^{-1} = \{ R^{-1} \mid R \in \beta \},
$$

and that:
$$
\beta \cap \beta^{-1} = \{ R \cap R^{-1} \mid R \in \beta \}.
$$

 Now we can define operations of the form $[\mu](\beta)$,
 where $\mu$ is a binary $x$-g.c. and $\beta \subseteq
W(M_1, M_2)$. These operations are defined for subsets of $W(M_1,
M_2)$, and return the subsets of $W^{n + 1}(M_1, M_2)$, where $n =
\delta(\mu)$. We define the operations by induction on the degree
of $x$-g.c. $\mu$.

\emph{Basis}. If $\delta(\mu) = 0$, then we stipulate that

$$
[\mu](\beta) = [f](\beta),
$$

where  $f$ is the binary Boolean function defined by
 $\psi(P_1(x),\ldots, P_n(x))$ for $P_1(x),\ldots, P_n(x)$.

\emph{Induction step}. If $\delta(\mu) = n + 1$ and $\mu$ is
$\forall$-guarded, then we distinguish between two cases:

\emph{Case 1}. If $\mu$ is not special, then we have $ \mu =
\forall x_2\ldots x_{m + 1}(\pi^{m}_1Sx \to \mu^-)$ in the
assumptions of \eqref{E:Aguard}. We define $[\mu](\beta)$ as the
set of tuples of the form $\langle A_1,\ldots, A_{\delta(\mu) +
1}\rangle$, such that $\langle A_1,\ldots, A_{\delta(\mu)}\rangle
\in [\mu^-](\beta)$ and $A_{\delta(\mu) + 1}$ satisfies the
following condition for all natural $r,t$ such that $\{ r,t \} =
\{ 1,2 \}$:
\begin{align}
&(\forall a_1 \in U_r)(\forall \bar{b}_{m+1} \in
U_t)(a_1\mathrel{A_{\delta(\mu) + 1}}b_1 \wedge \pi^m_1S^tb) \Rightarrow\notag\\
&\qquad\qquad\qquad \Rightarrow\exists a_2\ldots a_{m+1} \in
U_r(\pi^m_1S^ra \wedge
a_{m+1}\mathrel{A_{\delta(\mu)}}b_{m+1})\label{E:cond-A}\tag{\text{back}}
\end{align}

\emph{Case 2}. If $\mu$ is special, then we define $[\mu](\beta)$
as the set of couples of the form $\langle B, A\rangle$, such that
$B \in \beta$ and $A \in W(M_1,M_2)$ satisfies the following
condition for all natural $r,t$ such that $\{ r,t \} = \{ 1,2 \}$:
\begin{align}
&(\forall a_1 \in U_r)(\forall \bar{b}_{m+1} \in
U_t)(a_1\mathrel{A}b_1 \wedge \pi^m_1S^tb) \Rightarrow\notag\\
&\qquad\quad \Rightarrow (\exists a_2\ldots a_{m+1} \in
U_r(\pi^m_1S^ra \wedge a_{m+1}\mathrel{B}b_{m+1})
\wedge\notag\\
&\qquad\qquad\quad\wedge\exists c_2\ldots c_{m+1} \in
U_r(\iota_r(S_1)(a_1, c_2) \wedge \pi^m_2S^rc \wedge
c_{m+1}\mathrel{B^{-1}}b_{m+1}))
\label{E:cond-Aspec}\tag{\text{s-back}}
\end{align}

Finally, if $\delta(\mu) = n + 1$ and $\mu$ is $\exists$-guarded,
we again have two cases.

\emph{Case 3}. If $\mu$ is not special, then we have $\mu =
\exists x_2\ldots x_{m + 1}(\pi^{m}_1Sx \wedge \mu^-)$ in the
assumptions of \eqref{E:Eguard}. We define $[\mu](\beta)$ as the
set of tuples of the form $\langle A_1,\ldots, A_{\delta(\mu) +
1}\rangle$, such that $\langle A_1,\ldots, A_{\delta(\mu)}\rangle
\in [\mu^-](\beta)$ and $A_{\delta(\mu) + 1}$ satisfies the
following condition for all natural $r,t$ such that $\{ r,t \} =
\{ 1,2 \}$:
\begin{align}
&(\forall \bar{a}_{m+1} \in U_r)(\forall b_1 \in
U_t)(a_1\mathrel{A_{\delta(\mu) + 1}}b_1 \wedge \pi^m_1S^ra) \Rightarrow\notag\\
&\qquad\qquad\qquad \Rightarrow\exists b_2\ldots b_{m+1} \in
U_t(\pi^m_1S^tb \wedge
a_{m+1}\mathrel{A_{\delta(\mu)}}b_{m+1})\label{E:cond-E}\tag{\text{forth}}
\end{align}

\emph{Case 4}. If $\mu$ is special, then we define $[\mu](\beta)$
as the set of couples of the form $\langle B, A\rangle$, such that
$B \in \beta$ and $A \in W(M_1,M_2)$ satisfies the following
condition for all natural $r,t$ such that $\{ r,t \} = \{ 1,2 \}$:
\begin{align}
&(\forall \bar{a}_{m+1} \in U_r)(\forall b_1 \in
U_t)(a_1\mathrel{A}b_1 \wedge \pi^m_1S^ra) \Rightarrow\notag\\
&\qquad\quad \Rightarrow (\exists b_2\ldots b_{m+1} \in
U_t(\pi^m_1S^tb \wedge a_{m+1}\mathrel{B}b_{m+1})
\wedge\notag\\
&\qquad\qquad\quad\wedge\exists c_2\ldots c_{m+1} \in
U_t(\iota_t(S_1)(b_1, c_2) \wedge \pi^m_2S^tc \wedge
a_{m+1}\mathrel{B^{-1}}c_{m+1}))
\label{E:cond-Espec}\tag{\text{s-forth}}
\end{align}

We now prove an important lemma about the defined operations:

\begin{lemma}\label{L:mu-op}
Let $\varphi(x)$ be logically equivalent to $\mu(\psi_1,\ldots,
\psi_n)$, where $\mu$ is an arbitrary $x$-g.c. Then, if $\beta
\subseteq Rel(\{ \psi_1,\ldots, \psi_n \}, M_1, M_2)$ for some
models $M_1$ and $M_2$, then $\varphi(x)$ is invariant w.r.t. to
the set
$$
\{ A_{\delta(\mu) + 1} \mid \exists A_1,\ldots,
A_{\delta(\mu)}(\langle A_1,\ldots, A_{\delta(\mu) + 1}\rangle \in
[\mu](\beta)) \}.
$$
\end{lemma}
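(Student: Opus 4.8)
The plan is to prove this by induction on $\delta(\mu)$, matching the inductive structure of the definition of the operations $[\mu](\beta)$, and splitting into the same four cases ($\forall$-guarded non-special, $\forall$-guarded special, $\exists$-guarded non-special, $\exists$-guarded special) plus the degree-$0$ basis. The overall strategy rests on the observation that invariance of $\varphi(x)$ w.r.t. a relation $A$ is by Definition~\ref{D:invariance} a condition that, for pointed models related by $A$, truth of $\varphi$ transfers in one direction; and the operations $[\mu](\beta)$ were precisely engineered so that the quantifier structure of $\mu$ matches the back-and-forth conditions \eqref{E:cond-A}, \eqref{E:cond-Aspec}, \eqref{E:cond-E}, \eqref{E:cond-Espec} that make the transfer go through. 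So what I want to show, in each case, is that if $A_{\delta(\mu)+1}$ is the last entry of some tuple in $[\mu](\beta)$, then for $a\mathrel{A_{\delta(\mu)+1}}b$ with $a\models_1\varphi(x)$ we can derive $b\models_2\varphi(x)$, using the semantics of the guard and the invariance hypothesis on the predecessor data.

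\smallskip

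First I would handle the \emph{basis}, $\delta(\mu)=0$: here $\mu$ defines a Boolean function $f$, and $[\mu](\beta)=[f](\beta)$. I would unfold $[f](\beta)$ by the four cases of its definition (constant, monotone, anti-monotone, rest) and, using Lemma~\ref{L:Boolean}(1)--(4) to express $f$ via $\wedge,\vee,\neg$, check that invariance of the $\psi_i$ w.r.t. a relation $R\in\beta$ lifts to invariance of the monotone Boolean combination w.r.t. $R$, of the anti-monotone combination w.r.t. $R^{-1}$, and of a rest function w.r.t. $R\cap R^{-1}$ (where both directions of invariance are available). The key point is that $\wedge$ and $\vee$ preserve one-directional invariance, negation flips the direction (hence $R^{-1}$), and an arbitrary Boolean combination needs invariance in both directions at once (hence $R\cap R^{-1}$, whose converse is itself).

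\smallskip

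For the \emph{induction step} I take a tuple $\langle A_1,\dots,A_{\delta(\mu)+1}\rangle\in[\mu](\beta)$; by definition $\langle A_1,\dots,A_{\delta(\mu)}\rangle\in[\mu^-](\beta)$, so by the induction hypothesis $\mu^-(\psi_1,\dots,\psi_n)$ is invariant w.r.t. the projected set containing $A_{\delta(\mu)}$. Now suppose $a_1\mathrel{A_{\delta(\mu)+1}}b_1$, say with $a_1\in U_1$, $b_1\in U_2$, and $a_1\models_1\mu(\psi_1,\dots,\psi_n)$. In the $\forall$-guarded non-special Case~1, $\mu(\psi_1,\dots,\psi_n)$ reads $\forall x_2\dots x_{m+1}(\pi^m_1Sx\to\mu^-(\dots))$; to show it holds at $b_1$ I take any $\bar b_{m+1}$ in $U_2$ with $\pi^m_1S^2b$, invoke the \eqref{E:cond-A} (``back'') condition to produce a matching $\bar a_{m+1}$ in $U_1$ with $\pi^m_1S^1a$ and $a_{m+1}\mathrel{A_{\delta(\mu)}}b_{m+1}$, apply the universal truth of $\mu$ at $a_1$ to get $a_{m+1}\models_1\mu^-(\dots)$, and finally transfer along $A_{\delta(\mu)}$ by the induction hypothesis to conclude $b_{m+1}\models_2\mu^-(\dots)$. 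The $\exists$-guarded Case~3 is the mirror image using \eqref{E:cond-E} (``forth''). I must also run each argument with the roles of $r,t$ swapped, i.e. for $a_1\in U_2$, $b_1\in U_1$, which is why the conditions are stated ``for all $r,t$ with $\{r,t\}=\{1,2\}$''.

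\smallskip

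\textbf{The main obstacle is the special cases} (Cases 2 and 4), and this is where the argument genuinely departs from the routine. Here $\mu\in\nu_x(Q,f)$ with $f$ being $Q$-special, so by Lemma~\ref{L:Boolean}(7) the propositional core can realize \emph{both} $p_1$ and $\neg p_1$ by substitution of atoms, meaning $\mu^-(\psi_1,\dots,\psi_n)$ must be handled as if invariance were needed in both directions. The definition of $[\mu](\beta)$ in these cases uses a single relation $B\in\beta$ together with its inverse $B^{-1}$, and the conditions \eqref{E:cond-Aspec}/\eqref{E:cond-Espec} demand \emph{two} witnessing chains along the guard --- one matching via $B$ and one via $B^{-1}$. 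The delicate step is to show that the conjunction of these two back(forth)-style clauses suffices to transfer the truth of the specially-guarded formula; I would argue that, since $f$ is $Q$-special, its representation from Lemma~\ref{L:Boolean}(8) or (9) forces the relevant subformula to behave monotonically in the direction serviced by $B$ and anti-monotonically in the direction serviced by $B^{-1}$, so that the two witnessing chains together cover exactly the propositional structure that needs transferring. Getting the bookkeeping right --- which chain feeds which literal of the core, and why the ``$\exists/\forall$-special'' (not merely ``weakly special'') hypothesis is exactly what licenses dropping one of the two TFT/FTF obstructions --- is the crux of the whole lemma, and I expect the bulk of the proof's length to live here.
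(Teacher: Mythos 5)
Your proposal is correct and follows the paper's proof almost exactly: the same induction on $\delta(\mu)$, the same four-way basis split according to $[f](\beta)$, and the same use of \eqref{E:cond-A}/\eqref{E:cond-E} in the non-special step, with the role-swap over $\{r,t\}=\{1,2\}$ handled as you describe. The one place where you genuinely diverge is the special cases, and there your route differs from (but is no harder than) the paper's. The paper does \emph{not} invoke the normal forms of Lemma~\ref{L:Boolean}(8)--(9) here at all. Instead it argues directly: in the $\forall$-special case both witnessing chains start from $a_1$, so both endpoints $a_{m+1}$ and $c_{m+1}$ satisfy the propositional core; writing $\bar{\alpha}_n$, $\bar{\gamma}_n$, $\bar{\eta}_n$ for the truth-value tuples of $\psi_1,\ldots,\psi_n$ at $a_{m+1}$, $c_{m+1}$, $b_{m+1}$, the relations $a_{m+1}\mathrel{B}b_{m+1}$ and $c_{m+1}\mathrel{B^{-1}}b_{m+1}$ give the sandwich $\bar{\alpha}_n \leq \bar{\eta}_n \leq \bar{\gamma}_n$ with $f$ true at both ends, and the bare definition of ``not a $TFT$-function'' then forces $f(\bar{\eta}_n)=1$; dually in the $\exists$-special case non-$FTF$-ness forces $f$ to hold at one of the two endpoints, either of which witnesses the existential guard. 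Your alternative---push the monotone components of the representation from Lemma~\ref{L:Boolean}(8)/(9) along the $B$-witness and the anti-monotone components along the $B^{-1}$-witness---does close once the bookkeeping is done (every conjunct of the CNF of a non-$TFT$ core is either monotone or anti-monotone, and each kind is inherited at $b_{m+1}$ from one of the two endpoints), but it is strictly more machinery than needed; the sandwich-plus-definition argument is shorter and is the one the paper uses. One small correction to your framing: the special cases are not part of the inductive recursion at all, since $[\mu](\beta)$ there consists of couples $\langle B,A\rangle$ with $B\in\beta$ directly, so one uses $B \in Rel(\{\psi_1,\ldots,\psi_n\},M_1,M_2)$ rather than the induction hypothesis on $\mu^-$.
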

\begin{proof}
We argue by induction on $\delta(\mu)$.

\emph{Basis}. Assume $\delta(\mu) = 0$. Then  $\varphi(x)$ is
logically equivalent to $\psi(\psi_1(x),\ldots, \psi_n(x))$, where
$\psi$ induces some Boolean $f$ on $\psi_1(x),\ldots, \psi_n(x)$.
Therefore, we need to show that $\varphi(x)$ is invariant w.r.t.
every relation in $[\mu](\beta)$. We then have to distinguish
between the following $4$ cases:

\emph{Case 1}. $f$ is constant. Then $[\mu](\beta) = W(M_1,M_2)$
and $\varphi(x)$ is either equivalent to $\top$ or to $\bot$.
Obviously, $\varphi(x)$ is invariant w.r.t. any relation in
$W(M_1,M_2)$

\emph{Case 2}. $f$ is non-constant monotone. Then $[\mu](\beta) =
\beta$. Since $\psi_1,\ldots, \psi_n$ are invariant w.r.t.
$\beta$, it is obvious that $\varphi(x)$ is invariant w.r.t.
$[\mu](\beta) = \beta$ as well.

\emph{Case 3}. $f$ is non-constant anti-monotone. Then
$[\mu](\beta) = \beta^{-1}$. Since, by the assumption of the
lemma, $\psi_1,\ldots, \psi_n$ are invariant w.r.t. to $\beta$,
then, by contraposition, $\neg\psi_1,\ldots, \neg\psi_n$ are
invariant w.r.t. every inverse of a relation from $\beta$.
Therefore, $\varphi(x)$ is invariant w.r.t. $[\mu](\beta) =
\beta^{-1}$.

\emph{Case 4}. $f$ is a rest function, that is to say, neither
monotone nor anti-monotone. Then $[\mu](\beta) =
\beta\cap\beta^{-1}$. If $A \in \beta\cap\beta^{-1}$, then by
assumption of the lemma and contraposition we have both that
$\psi_1,\ldots, \psi_n$ are invariant w.r.t. $A$ and
$\neg\psi_1,\ldots, \neg\psi_n$ are invariant w.r.t. $A$.
Therefore every Boolean combination of $\psi_1,\ldots, \psi_n$ is
also invariant w.r.t. $A$. Since $A$ was chosen arbitrarily, this
means that $\varphi(x)$ is invariant w.r.t. $[\mu](\beta)$.

\emph{Induction step}. Assume $\delta(\mu) = k + 1$. We then have
to distinguish between the following $4$ cases:

\emph{Case 1}. If $\mu$ is $\forall$-guarded and not special, then
$\mu(\psi_1,\ldots, \psi_n)$ and thus $\varphi(x)$ is equivalent
to a formula $\forall x_2\ldots x_{m + 1}(\pi^{m}_1Sx \to \mu^-)$
under the assumptions of \eqref{E:Aguard}. Assume that $\langle
A_1,\ldots, A_{\delta(\mu) + 1}\rangle$ is in $[\mu](\beta)$. Then
we also have that $\langle A_1,\ldots, A_{\delta(\mu)}\rangle$ is
in $[\mu^-](\beta)$.

Now, assume that $\{ r,t \} = \{ 1,2 \}$, $a_1 \in U_r$, $b_1 \in
U_t$, and $a_1\mathrel{A_{\delta(\mu) + 1}}b_1$. Moreover, assume
that

\begin{equation}\label{E:1}
a_1 \models_r \mu(\psi_1,\ldots, \psi_n).
\end{equation}

Then let $b_2\ldots b_{m+1} \in U_t$ be such that

\begin{equation}\label{E:2}
   \pi^m_1S^tb.
\end{equation}
Then, by condition \eqref{E:cond-A} there exist $a_2\ldots a_{m+1}
\in U_r$ such that:
\begin{equation}\label{E:3}
    \pi^m_1S^ra,
\end{equation}
and
\begin{equation}\label{E:4}
a_{m+1}\mathrel{A_{\delta(\mu)}}b_{m+1}.
\end{equation}
By \eqref{E:1} and \eqref{E:3} we know that
\begin{equation}\label{E:5}
a_{m+1} \models_r \mu^-(\psi_1,\ldots, \psi_n).
\end{equation}
By \eqref{E:4}, the fact that $\langle A_1,\ldots,
A_{\delta(\mu)}\rangle \in [\mu^-](\beta)$ and induction
hypothesis, we have
\begin{equation}\label{E:6}
b_{m+1} \models_t \mu^-(\psi_1,\ldots, \psi_n).
\end{equation}
Since  $b_2\ldots b_{m+1}$ were chosen arbitrarily under the
condition \eqref{E:2}, we get that $b_1 \models_t
\mu(\psi_1,\ldots, \psi_n)$, and thus we are done.

\emph{Case 2}. Assume that $\mu$ is $\forall$-guarded and special.
Then we have $\mu \in \nu_x(\forall, f)$, and $f$, being
$\forall$-special, is not a $TFT$-function. Also,
$\mu(\psi_1,\ldots, \psi_n)$ is equivalent to

$$
\forall x_2\ldots x_{m + 1}(\pi^{m}_1Sx \to \psi(\psi_1(x_{m + 1})
,\ldots, \psi_n(x_{m + 1}))),
$$
where $\psi$ induces $f$ for $\psi_1(x_{m + 1}) ,\ldots,
\psi_n(x_{m + 1})$.

Let $\langle B,A\rangle \in [\mu](\beta)$ and assume that $\{ r,t
\} = \{ 1,2 \}$, $a_1 \in U_r$, $b_1 \in U_t$, and
$a_1\mathrel{A}b_1$. Moreover, assume that

\begin{equation}\label{E:1a}
a_1 \models_r \mu(\psi_1,\ldots, \psi_n).
\end{equation}

Then let $b_2\ldots b_{m+1} \in U_t$ be such that

\begin{equation}\label{E:2a}
   \pi^m_1S^tb.
\end{equation}
Then, by condition \eqref{E:cond-Aspec} there exist $a_2\ldots
a_{m+1} \in U_r$ such that:
\begin{equation}\label{E:3a}
    \pi^m_1S^ra,
\end{equation}
and
\begin{equation}\label{E:4a}
a_{m+1}\mathrel{B}b_{m+1}.
\end{equation}
Moreover, by the same condition there exist $c_2\ldots c_{m+1} \in
U_r$ such that:
\begin{equation}\label{E:3aa}
    \iota_r(S_1)(a_1, c_2) \wedge \pi^m_2S^rc
\end{equation}
and
\begin{equation}\label{E:4aa}
c_{m+1}\mathrel{B^{-1}}b_{m+1}.
\end{equation}

 By \eqref{E:1a} and \eqref{E:3a} we
know that
\begin{equation}\label{E:5a}
a_{m+1} \models_r \psi(\psi_1(x_{m + 1}) ,\ldots, \psi_n(x_{m +
1})),
\end{equation}
that is to say, that the $n$-tuple $\bar{\alpha}_n$ of Boolean
values induced by $\psi_1(x_{m + 1}) ,\ldots, \psi_n(x_{m + 1})$
on $(M_r, a_{m+1})$, verifies function $f$.

Further, by \eqref{E:1a} and \eqref{E:3aa} we know that
\begin{equation}\label{E:5aa}
c_{m+1} \models_r \psi(\psi_1(x_{m + 1}) ,\ldots, \psi_n(x_{m +
1})),
\end{equation}
that is to say, that the $n$-tuple $\bar{\gamma}_n$ of Boolean
values induced by $\psi_1(x_{m + 1}) ,\ldots, \psi_n(x_{m + 1})$
on $(M_r, c_{m+1})$, verifies function $f$.

Now, let $\bar{\eta}_n$ be the $n$-tuple of Boolean values induced
by $\psi_1(x_{m + 1}) ,\ldots, \psi_n(x_{m + 1})$ on $(M_t,
b_{m+1})$. By \eqref{E:4a}, \eqref{E:4aa}, and the fact that $B
\in \beta \subseteq Rel(\{ \psi_1,\ldots, \psi_n \}, M_1, M_2)$,
we know that

$$
\bar{\alpha}_n \leq \bar{\eta}_n \leq \bar{\gamma}_n.
$$

Therefore, by \eqref{E:5a}, \eqref{E:5aa}, and the fact that $f$
is not a $TFT$-function, we must have that:
\begin{equation}\label{E:6a}
b_{m+1} \models_t \psi(\psi_1(x_{m + 1}) ,\ldots, \psi_n(x_{m +
1})).
\end{equation}

Since  $b_2\ldots b_{m+1}$ were chosen arbitrarily under the
condition \eqref{E:2a}, we get that

\noindent$b_1 \models_t \mu(\psi_1,\ldots, \psi_n)$, and thus we
are done.

\emph{Case 3}. If $\mu$ is $\exists$-guarded and not special, then
$\mu(\psi_1,\ldots, \psi_n)$ and thus $\varphi(x)$ is equivalent
to a formula $\exists x_2\ldots x_{m + 1}(\pi^{m}_1Sx \wedge
\mu^-)$ under the assumptions of \eqref{E:Eguard}. Assume that
$\langle A_1,\ldots, A_{\delta(\mu) + 1}\rangle$ is in
$[\mu](\beta)$. Then we also have that $\langle A_1,\ldots,
A_{\delta(\mu)}\rangle$ is in $[\mu^-](\beta)$.

Now, assume that $\{ r,t \} = \{ 1,2 \}$, $a_1 \in U_r$, $b_1 \in
U_t$, and $a_1\mathrel{A_{\delta(\mu) + 1}}b_1$. Moreover, assume
that

\begin{equation}\label{E:1c}
a_1 \models_r \mu(\psi_1,\ldots, \psi_n).
\end{equation}

Then let $a_2\ldots a_{m+1} \in U_r$ be such that

\begin{equation}\label{E:2c}
   \pi^m_1S^ra,
\end{equation}
and
\begin{equation}\label{E:3c}
a_{m+1} \models_r \mu^-(\psi_1,\ldots, \psi_n).
\end{equation}
Then, by condition \eqref{E:cond-E}, there exist $b_2\ldots
b_{m+1} \in U_t$ such that
\begin{equation}\label{E:4c}
    \pi^m_1S^tb,
\end{equation}
and
\begin{equation}\label{E:5c}
a_{m+1}\mathrel{A_{\delta(\mu)}}b_{m+1}.
\end{equation}

By \eqref{E:5c}, the fact that $\langle A_1,\ldots,
A_{\delta(\mu)}\rangle \in [\mu^-](\beta)$ and induction
hypothesis, we have
\begin{equation}\label{E:6c}
b_{m+1} \models_t \mu^-(\psi_1,\ldots, \psi_n).
\end{equation}
Therefore, by \eqref{E:4c} and \eqref{E:6c}, we get that $b_1
\models_t \mu(\psi_1,\ldots, \psi_n)$, and thus we are done.

\emph{Case 4}. Assume that $\mu$ is $\exists$-guarded and special.
Then we have $\mu \in \nu_x(\exists, f)$, and $f$, being
$\exists$-special, is not an $FTF$-function. Also,
$\mu(\psi_1,\ldots, \psi_n)$ is equivalent to

$$
\exists x_2\ldots x_{m + 1}(\pi^{m}_1Sx \wedge \psi(\psi_1(x_{m +
1}) ,\ldots, \psi_n(x_{m + 1}))),
$$
where $\psi$ induces $f$ for $\psi_1(x_{m + 1}) ,\ldots,
\psi_n(x_{m + 1})$.

Let $\langle B,A\rangle \in [\mu](\beta)$ and assume that $\{ r,t
\} = \{ 1,2 \}$, $a_1 \in U_r$, $b_1 \in U_t$, and
$a_1\mathrel{A}b_1$. Moreover, assume that

\begin{equation}\label{E:1d}
a_1 \models_r \mu(\psi_1,\ldots, \psi_n).
\end{equation}

Then one can choose $a_2\ldots a_{m+1} \in U_r$ such that

\begin{equation}\label{E:2d}
   \pi^m_1S^ra,
\end{equation}
and
\begin{equation}\label{E:3d}
  a_{m+1} \models_r \psi(\psi_1(x_{m + 1}) ,\ldots, \psi_n(x_{m +
1})).
\end{equation}
Therefore, we know that the $n$-tuple $\bar{\eta}_n$ of Boolean
values induced by $\psi_1(x_{m + 1}) ,\ldots, \psi_n(x_{m + 1})$
on $(M_r, a_{m+1})$, verifies function $f$.

But then, by condition \eqref{E:cond-Espec} there exist $b_2\ldots
b_{m+1} \in U_t$ such that:
\begin{equation}\label{E:4d}
    \pi^m_1S^tb,
\end{equation}
and
\begin{equation}\label{E:5d}
a_{m+1}\mathrel{B}b_{m+1}.
\end{equation}
Moreover, by the same condition there exist $c_2\ldots c_{m+1} \in
U_t$ such that:
\begin{equation}\label{E:4dd}
    \iota_t(S_1)(b_1, c_2) \wedge \pi^m_2S^tc
\end{equation}
and
\begin{equation}\label{E:5dd}
a_{m+1}\mathrel{B^{-1}}c_{m+1}.
\end{equation}
Now, let $\bar{\alpha}_n$ and $\bar{\beta}_n$ be the $n$-tuples of
Boolean values induced by $\psi_1(x_{m + 1}) ,\ldots, \psi_n(x_{m
+ 1})$ on $(M_t, b_{m+1})$ and $(M_t, c_{m+1})$ respectively. By
\eqref{E:5d}, \eqref{E:5dd}, and the fact that

\noindent$B \in \beta \subseteq Rel(\{ \psi_1,\ldots, \psi_n \},
M_1, M_2)$, we know that

$$
\bar{\beta}_n \leq \bar{\eta}_n \leq \bar{\alpha}_n.
$$
Therefore, since $f$ is not an $FTF$-function, we know that at
least one of the tuples $\bar{\alpha}_n$, $\bar{\beta}_n$ verifies
$f$. Whence we have either

\begin{equation}\label{E:6d}
b_{m+1} \models_t \psi(\psi_1(x_{m + 1}) ,\ldots, \psi_n(x_{m +
1})),
\end{equation}
or

\begin{equation}\label{E:7d}
c_{m+1} \models_t \psi(\psi_1(x_{m + 1}) ,\ldots, \psi_n(x_{m +
1})).
\end{equation}
In both cases, using either \eqref{E:4d} or \eqref{E:4dd}, we get
that $b_1 \models_t \mu(\psi_1,\ldots, \psi_n)$, and thus we are
done.
\end{proof}

We are now ready to define asimulations, the central notion of
this paper.

\begin{definition}\label{D:asim}
Let $\mathcal{L}^\Theta_x(\mu_1,\ldots, \mu_s)$ be a guarded
$x$-fragment of the correspondence language and let $M_1, M_2$ be
$\Theta$-models. A non-empty relation

\noindent$A \in Rel(\mathcal{L}^\Theta_x(\emptyset),M_1, M_2)$ is
an \emph{$(\mathcal{L}^\Theta_x(\mathbb{M}), M_1,
M_2)$-asimulation} iff for every $i$ such that $1 \leq i \leq s$
there exist $A_1,\ldots, A_{\delta(\mu_i)}$ such that
$$
\langle A_1,\ldots, A_{\delta(\mu_i)}, A\rangle \in [\mu](\{ A
\}).
$$
\end{definition}

The fact that $A$ is an $(\mathcal{L}^\Theta_x(\mathbb{M}), M_1,
M_2)$-asimulation we will abbreviate by

\noindent$A \in A\sigma(\mathcal{L}^\Theta_x(\mathbb{M}), M_1,
M_2)$.

\begin{example}
{\em

In the notation of the two examples given in the previous section,
we get as a result of above definition, that for any two given
models $M_1$ and $M_2$:

\begin{enumerate}
\item The set $A\sigma(\mathcal{L}^\Sigma_x(\wedge, \vee, \bot,
\top, \lambda_5), M_1, M_2)$ is the set of all asimulations
between $M_1$ and $M_2$ as defined in \cite{Ol13}.

\item The set $A\sigma(\mathcal{L}^\Sigma_x(\wedge, \vee, \bot,
\top, \neg, \lambda_1, \lambda_4), M_1, M_2)$ is the set of all
bisimulations between $M_1$ and $M_2$.

\item The set $A\sigma(\mathcal{L}^\Sigma_x(\wedge, \vee, \bot,
\top, \lambda_2, \lambda_3, \lambda_5), M_1, M_2)$ is the set of
all relations $A$ for which there exists a relation $B$ such that
$\langle A, B\rangle$ is a $(2,2)$-modal asimulation between $M_1$
and $M_2$ as defined in \cite[Definition 5]{Ol15}.
\end{enumerate}
}
\end{example}

With the help of Definition \ref{D:asim}, we obtain one part of
our characterization as a corollary of Lemma \ref{L:mu-op}:

\begin{corollary}\label{L:cor}
Let $\mathcal{L}^\Theta_x(\mu_1,\ldots, \mu_s)$ be a guarded
$x$-fragment of the correspondence language and let $M_1, M_2$ be
$\Theta$-models. If $A \in
A\sigma(\mathcal{L}^\Theta_x(\mathbb{M}), M_1, M_2)$ and
$\varphi(x)$ is logically equivalent to a formula in
$\mathcal{L}^\Theta_x(\mathbb{M})$, then $\varphi(x)$ is invariant
w.r.t. $A$.
\end{corollary}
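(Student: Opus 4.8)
The plan is to derive this corollary directly from Lemma \ref{L:mu-op} by an induction on the construction of the formula in $\mathcal{L}^\Theta_x(\mathbb{M})$ to which $\varphi(x)$ is equivalent. Since invariance is preserved under logical equivalence (it is defined purely in terms of truth at pointed models), it suffices to show that \emph{every} formula $\chi(x) \in \mathcal{L}^\Theta_x(\mathbb{M})$ is invariant with respect to the given asimulation $A$; the claim for $\varphi(x)$ then follows because $\varphi(x)$ is equivalent to some such $\chi(x)$. Recall that $\mathcal{L}^\Theta_x(\mathbb{M})$ is, by definition, the least set of $\Theta$-formulas containing $P_n(x)$ for every $P_n \in \Theta$ and closed under applications of the connectives in $\mathbb{M} = \{\mu_1,\ldots,\mu_s\}$. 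So the induction is on this generating process.

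For the \emph{base case}, I would handle the atomic formulas $P_n(x)$. Since $A \in A\sigma(\mathcal{L}^\Theta_x(\mathbb{M}), M_1, M_2)$ requires in particular that $A \in Rel(\mathcal{L}^\Theta_x(\emptyset), M_1, M_2)$, and $\mathcal{L}^\Theta_x(\emptyset)$ is exactly the set of atoms $\{P_n(x) \mid P_n \in \Theta\}$, the relation $A$ makes $\mathcal{L}^\Theta_x(\emptyset)$ invariant by the definition of $Rel$. Hence each atom $P_n(x)$ is invariant w.r.t. $A$, which settles the base case. For the \emph{induction step}, suppose $\chi(x)$ is obtained as an application $\mu_i(\psi_1,\ldots,\psi_{n})$ of one of the generating connectives $\mu_i$, where $\psi_1,\ldots,\psi_{n} \in \mathcal{L}^\Theta_x(\mathbb{M})$ are formulas for which the induction hypothesis already gives invariance w.r.t. $A$. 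I want to apply Lemma \ref{L:mu-op} with $\mu := \mu_i$ and $\beta := \{A\}$. To invoke the lemma I must verify its hypothesis, namely $\beta \subseteq Rel(\{\psi_1,\ldots,\psi_{n}\}, M_1, M_2)$; but $\beta = \{A\}$, and the induction hypothesis says precisely that each $\psi_j$ is invariant w.r.t.\ $A$, i.e.\ $A \in Rel(\{\psi_1,\ldots,\psi_{n}\}, M_1, M_2)$, so the hypothesis holds. The definition of asimulation (Definition \ref{D:asim}) then furnishes relations $A_1,\ldots,A_{\delta(\mu_i)}$ with $\langle A_1,\ldots,A_{\delta(\mu_i)}, A\rangle \in [\mu_i](\{A\})$, which means $A$ lies in the set
$$
\{ A_{\delta(\mu_i)+1} \mid \exists A_1,\ldots,A_{\delta(\mu_i)}\,(\langle A_1,\ldots,A_{\delta(\mu_i)+1}\rangle \in [\mu_i](\{A\})) \}.
$$
Lemma \ref{L:mu-op} therefore yields that $\chi(x) = \mu_i(\psi_1,\ldots,\psi_{n})$ is invariant w.r.t. $A$, completing the induction step.

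I do not expect a genuine obstacle here, since Lemma \ref{L:mu-op} has already absorbed all the real work (the four case analyses over the quantifier type and the $\forall$/$\exists$-special behaviour of the propositional core). The one point requiring care is bookkeeping on the \emph{arity}: Definition \ref{D:asim} quantifies over each $\mu_i$ with a fixed arity, while a formula in the fragment may be built by applying $\mu_i$ to some list $\psi_1,\ldots,\psi_n$; I would note that, since every $r$-ary $x$-g.c.\ is also an $s$-ary one for $r < s$ (as observed in Section \ref{S:Prel}), the list $\psi_1,\ldots,\psi_n$ matches $\mu_i$ viewed at the appropriate arity, and the extra arguments, if any, are vacuous and do not affect invariance. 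With this remark the induction goes through cleanly, and the corollary is established.
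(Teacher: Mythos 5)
Your proposal is correct and follows essentially the same route as the paper's own proof: an induction on the construction of the equivalent formula in $\mathcal{L}^\Theta_x(\mathbb{M})$, with the base case settled by $A \in Rel(\mathcal{L}^\Theta_x(\emptyset),M_1,M_2)$ and the induction step by combining Definition \ref{D:asim} with Lemma \ref{L:mu-op} applied to $\beta := \{A\}$. Your extra remark on arity bookkeeping is a harmless refinement of what the paper already handles by its convention of identifying $r$-ary and $s$-ary guarded connectives.
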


\begin{proof}
Assume that $\varphi(x)$ is equivalent to a formula in $\psi(x)
\in \mathcal{L}^\Theta_x(\mathbb{M})$. We argue by induction on
the construction of $\psi(x)$. If, for some $P_n \in \Theta$,
$\psi(x)$ is $P_n(x)$, then $\psi(x)$ (and therefore $\varphi(x)$)
is invariant w.r.t. $A$, since $A \in
Rel(\mathcal{L}^\Theta_x(\emptyset),M_1, M_2)$.

If, for some $i$, such that $1 \leq i \leq s$, $\psi(x)$ is of the
form $\mu_i(\chi_1,\ldots, \chi_n)$, then there exist $A_1,\ldots,
A_{\delta(\mu_i)}$ such that
$$
\langle A_1,\ldots, A_{\delta(\mu_i)}, A\rangle \in [\mu](\{ A
\}).
$$
By induction hypothesis, we know that $A \in Rel(\{ \chi_1,\ldots,
\chi_n \}, M_1, M_2)$. Therefore, setting $\beta := \{ A \}$ in
Lemma \ref{L:mu-op}, we get that $\varphi(x)$, being equivalent to
$\mu_i(\chi_1, ,\ldots, \chi_n)$, is invariant w.r.t. $A$.
\end{proof}

We note that Corollary \ref{L:cor} applies to arbitrary guarded
fragments rather than to just standard ones and is therefore much
stronger than the `easy', left-to-right direction of our main
result, Theorem \ref{L:final}.

\section{Asimulations over saturated models}\label{S:Sat}

To proceed, we need to introduce some further notions and results
from classical model theory. For a $\Theta$-model $M$ and
$\bar{a}_n \in U$, let $M/\bar{a}_n$ be the extension of $M$ with
$\bar{a}_n$ as new individual constants interpreted as themselves.
It is easy to see that there is a simple relation between the
truth of a formula at a sequence of elements of a $\Theta$-model
and the truth of its substitution instance in an extension of the
above-mentioned kind; namely, for any $\Theta$-model $M$, any
$\Theta$-formula $\varphi(\bar{y}_n,\bar{w}_m)$ and any
$\bar{a}_n,\bar{b}_m \in U$ it is true that:

\[
(M/\bar{a}_n), \bar{b}_m \models \varphi(\bar{a}_n,\bar{w}_m)
\Leftrightarrow M, \bar{a}_n, \bar{b}_m \models
\varphi(\bar{y}_n,\bar{w}_m).
\]

We will call a theory of $M$ (and write $Th(M)$) the set of all
first-order sentences true at $M$. We will call an $n$-type of $M$
a set of formulas $\Gamma(\bar{w}_n)$ consistent with $Th(M)$.

\begin{definition}
Let $M$ be a $\Theta$-model. $M$ is \emph{$\omega$-saturated} iff
for all $k \in \mathbb{N}$ and for all $\bar{a}_n \in U$, every
$k$-type $\Gamma(\bar{w}_k)$ of $M/\bar{a}_n$ is satisfiable in
$M/\bar{a}_n$.
\end{definition}

Definition of $\omega$-saturation normally requires satisfiability
of $1$-types only. However, our modification is equivalent to the
more familiar version: see e.g. \cite[Lemma 4.31, p. 73]{Doets96}.

It is known that every model can be elementarily extended to an
$\omega$-saturated model; in other words, the following lemma
holds:

\begin{lemma}\label{L:ext}
Let $M$ be a $\Theta$-model. Then there is an $\omega$-saturated
extension $M'$ of $M$ such that for all $\bar{a}_n \in U$ and
every $\Theta$-formula $\varphi(\bar{w}_n)$:
\[
M, \bar{a}_n \models \varphi(\bar{w}_n) \Leftrightarrow M',
\bar{a}_n \models \varphi(\bar{w}_n).
\]
\end{lemma}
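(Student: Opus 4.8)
This is the classical theorem that every structure possesses an $\omega$-saturated elementary extension; observe that the displayed biconditional says exactly that $M'$ is an \emph{elementary} extension of $M$. The plan is to realize all types in a single elementary step, and then to iterate this along a chain of length $\omega$, taking a union at the top and invoking the elementary chain theorem.

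The main technical step is the following one-step construction, which I would prove by compactness: for every $\Theta$-model $N$ with domain $V$ there is an elementary extension $N^+ \succeq N$ such that, for every finite tuple $\bar a \in V$ and every $k$, each $k$-type of $N/\bar a$ is realized in $N^+$. To build $N^+$ I would pass to the elementary diagram language (a fresh constant $\underline c$ for each $c \in V$) and, for each such type $\Gamma(\bar w_k)$ over each finite $\bar a$, adjoin a fresh tuple of witness constants $\bar d_\Gamma$ together with the sentences obtained from $\Gamma$ by substituting $\bar d_\Gamma$ for $\bar w_k$ and the diagram constants for $\bar a$. The resulting theory $T$ (the elementary diagram together with all these witnessed type-instances) is finitely satisfiable: a finite fragment mentions only finitely many types, and for a single type $\Gamma$ over $\bar a$, any finite $\Gamma_0 \subseteq \Gamma$ is already realized in $N$ itself — since $\Gamma$ is consistent with the \emph{complete} theory $Th(N/\bar a)$, the sentence $\exists \bar w_k \bigwedge \Gamma_0$ lies in $Th(N/\bar a)$ and hence holds in $N$. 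Interpreting the finitely many (distinct) witness tuples by such realizations turns $N$ into a model of that finite fragment, so by compactness $T$ has a model, whose $\Theta$-reduct is the desired $N^+ \succeq N$.

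With the one-step construction in hand I would set $N_0 = M$, $N_{k+1} = (N_k)^+$, and $M' = \bigcup_{k < \omega} N_k$. By Tarski's elementary chain theorem each $N_k \preceq M'$; in particular $M \preceq M'$, which is precisely the displayed equivalence for parameters from $U$. For $\omega$-saturation, given a finite tuple $\bar a_n \in M'$ and a $k$-type $\Gamma(\bar w_k)$ of $M'/\bar a_n$, note that $\bar a_n$ is finite, so all of its entries already appear in some $N_j$; since $N_j \preceq M'$ we have $Th(N_j/\bar a_n) = Th(M'/\bar a_n)$, so $\Gamma$ is a $k$-type of $N_j/\bar a_n$ and is therefore realized in $N_{j+1} \preceq M'$, hence in $M'$.

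The one genuinely delicate point is the finite-satisfiability verification inside the compactness argument, where completeness of $Th(N/\bar a)$ is exactly what lets each finite fragment of a type be realized in $N$ itself; everything afterwards — the elementary chain theorem and the boundedness of a \emph{finite} parameter tuple inside a countable union — is routine, and in particular it is this finiteness that makes a chain of length $\omega$ sufficient. Alternatively, the lemma may simply be cited from a standard model-theory text, in the same spirit as the reference already given for the equivalence of the two formulations of $\omega$-saturation.
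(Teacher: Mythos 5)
Your argument is correct, but it is genuinely more than the paper does: the paper offers no proof of Lemma~\ref{L:ext} at all, simply remarking that it is ``a trivial corollary'' of a standard result in Chang--Keisler (Lemma 5.1.14 there), which is essentially the statement that every model has a suitably saturated elementary extension. What you have written out is precisely the standard proof of that cited result: the one-step compactness construction over the elementary diagram with witness constants for each type (where, as you rightly isolate, completeness of $Th(N/\bar a)$ is what turns consistency of a type into realizability of each of its finite fragments in $N$ itself), followed by an $\omega$-chain and Tarski's elementary chain theorem, with the finiteness of parameter tuples being exactly what lets a chain of length $\omega$ suffice for $\omega$-saturation. The only cosmetic mismatch is that your $N^{+}$ is obtained as a model of the expanded theory into which $N$ elementarily embeds, so one must (as usual) identify $N$ with its image to speak of a literal extension; this is routine and does not affect correctness. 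In short: the paper buys brevity by citation, your version buys self-containedness; either is acceptable here, and your closing remark that one could instead just cite a standard text is in fact what the paper does.
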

The latter lemma is a trivial corollary of e.g. \cite[Lemma
5.1.14, p. 216]{ChK73}.

In what follows, some types will be of special interest to us. If
$\Gamma(x)$ is a set of formulas, $M$ is a model and $a \in U$,
then we can define two further sets of formulas on the basis of
$\Gamma$:
$$
Tr(\Gamma(x), M, a) = \{ \psi(x) \in \Gamma \mid M, a \models
\psi(x) \},
$$
and
$$
Fa(\Gamma(x), M, a) = \{ \psi(x) \in \Gamma \mid M, a \not\models
\psi(x) \}.
$$

Saturated models are convenient since they allow to define
asimulations over them in a rather straightforward way. But before
we approach this feature of saturated models, we need to collect
some technical facts about modalities of degree $1$:

\begin{lemma}\label{L:modalities}
Let $\mu$ be an $x$-modality of degree $1$ and let
$\mathcal{L}^\Theta_x(\mathbb{M})$ be a guarded $x$-fragment of
the correspondence language, such that $\mathbb{M} \supseteq \{
\wedge, \vee, \top, \bot \}$. Let $\psi_1,\ldots, \psi_u$ be
arbitrary formulas in the correspondence language. Then:
\begin{enumerate}
\item $\{ \mu(\psi_1,\ldots, \psi_n)\mid \psi_1,\ldots, \psi_n \in
\mathcal{L}^\Theta_x(\mathbb{M}) \} = \{ \mu(\psi_1,\ldots,
\psi_1)\mid \psi_1 \in \mathcal{L}^\Theta_x(\mathbb{M}) \}$.

\item If $\mu$ is $\forall$-guarded and has a monotone
propositional core, then
$$
\models  \bigwedge^u_{i = 1}\mu(\psi_i,\ldots, \psi_i)
\leftrightarrow \mu(\bigwedge^u_{i = 1}\psi_i,\ldots,
\bigwedge^u_{i = 1}\psi_i).
$$

\item If $\mu$ is $\forall$-guarded and has an anti-monotone
propositional core, then
$$
\models  \bigwedge^u_{i = 1}\mu(\psi_i,\ldots, \psi_i)
\leftrightarrow \mu(\bigvee^u_{i = 1}\psi_i,\ldots, \bigvee^u_{i =
1}\psi_i).
$$

\item If $\mu$ is $\exists$-guarded and has a monotone
propositional core, then
$$
\models  \bigvee^u_{i = 1}\mu(\psi_i,\ldots, \psi_i)
\leftrightarrow \mu(\bigvee^u_{i = 1}\psi_i,\ldots, \bigvee^u_{i =
1}\psi_i).
$$

\item If $\mu$ is $\exists$-guarded and has an anti-monotone
propositional core, then
$$
\models  \bigvee^u_{i = 1}\mu(\psi_i,\ldots, \psi_i)
\leftrightarrow \mu(\bigwedge^u_{i = 1}\psi_i,\ldots,
\bigwedge^u_{i = 1}\psi_i).
$$
\end{enumerate}
\end{lemma}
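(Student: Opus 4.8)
The plan is to reduce every degree-$1$ modality to one of four canonical ``box/diamond'' shapes and then to carry out routine first-order manipulations. Since $\mu$ is a modality, its propositional core $\mu^0$ defines a non-constant Boolean function $f$ which is either monotone or anti-monotone, and since $\delta(\mu)=1$, $\mu$ is either $\forall$-guarded or $\exists$-guarded. Thus for any single formula $\chi$, the application $\mu(\chi,\ldots,\chi)$ is of the form $\forall x_2\ldots x_{m+1}(\pi^m_1Sx\to\mu^0(\chi,\ldots,\chi))$ or $\exists x_2\ldots x_{m+1}(\pi^m_1Sx\wedge\mu^0(\chi,\ldots,\chi))$. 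By Lemma~\ref{L:Boolean}, parts $2$ and $4$, we have $\mu^0(\chi,\ldots,\chi)\equiv\chi$ when $f$ is monotone and $\mu^0(\chi,\ldots,\chi)\equiv\neg\chi$ when $f$ is anti-monotone. Hence $\mu(\chi,\ldots,\chi)$ is logically equivalent to exactly one of $\forall x_2\ldots x_{m+1}(\pi^m_1Sx\to\chi(x_{m+1}))$, $\forall x_2\ldots x_{m+1}(\pi^m_1Sx\to\neg\chi(x_{m+1}))$, $\exists x_2\ldots x_{m+1}(\pi^m_1Sx\wedge\chi(x_{m+1}))$, or $\exists x_2\ldots x_{m+1}(\pi^m_1Sx\wedge\neg\chi(x_{m+1}))$, according to the guard and the polarity of $f$. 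This normal form is the engine behind all five parts.

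For Part $1$ the inclusion $\supseteq$ is immediate by taking all arguments equal. For the converse, fix $\psi_1,\ldots,\psi_n\in\mathcal{L}^\Theta_x(\mathbb{M})$. By Lemma~\ref{L:Boolean}, parts $1$ and $3$, write $f=F(p_1,\ldots,p_n)$ if $f$ is monotone and $f=\neg F(p_1,\ldots,p_n)$ if $f$ is anti-monotone, where $F$ is a superposition of $\wedge$'s and $\vee$'s, and put $\chi:=F(\psi_1,\ldots,\psi_n)$. Because $\{\wedge,\vee,\top,\bot\}\subseteq\mathbb{M}$ and the fragment is closed under applications of its connectives, $\chi\in\mathcal{L}^\Theta_x(\mathbb{M})$. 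A direct comparison of $\mu(\psi_1,\ldots,\psi_n)$ with the normal form for $\mu(\chi,\ldots,\chi)$ obtained above then shows the two are logically equivalent: the guard and quantifier prefix coincide, and the scopes agree since $\mu^0(\psi_1,\ldots,\psi_n)\equiv\chi$ in the monotone case and $\equiv\neg\chi$ in the anti-monotone case. The one delicate point is exactly this anti-monotone case: as $\neg$ need not belong to $\mathbb{M}$, one cannot place $f(\psi_1,\ldots,\psi_n)$ itself into the fragment, and it is essential to take $\chi$ to be the positive part $F(\psi_1,\ldots,\psi_n)$ and let the connective $\mu$ itself supply the outer negation through its core.

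Parts $2$--$5$ now follow by rewriting each $\mu(\psi_i,\ldots,\psi_i)$ in the appropriate normal form and applying elementary quantifier distributions. For Part $2$ ($\forall$-guarded, monotone) each conjunct becomes $\forall x_2\ldots x_{m+1}(\pi^m_1Sx\to\psi_i(x_{m+1}))$, and using that $\forall$ distributes over $\wedge$ together with $\pi^m_1Sx\to\bigwedge_i\psi_i\equiv\bigwedge_i(\pi^m_1Sx\to\psi_i)$ the conjunction collapses to $\forall x_2\ldots x_{m+1}(\pi^m_1Sx\to\bigwedge^u_{i=1}\psi_i(x_{m+1}))$, which is the normal form of the right-hand side. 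Part $4$ ($\exists$-guarded, monotone) is dual, using that $\exists$ distributes over $\vee$ and $\pi^m_1Sx\wedge\bigvee_i\psi_i\equiv\bigvee_i(\pi^m_1Sx\wedge\psi_i)$. Parts $3$ and $5$ are the anti-monotone analogues: after normalization each $\mu(\psi_i,\ldots,\psi_i)$ carries an inner $\neg\psi_i$, and one additionally invokes De~Morgan, namely $\bigwedge_i\neg\psi_i\equiv\neg\bigvee_i\psi_i$ for Part~$3$ and $\bigvee_i\neg\psi_i\equiv\neg\bigwedge_i\psi_i$ for Part~$5$, so that the inner disjunction (resp.\ conjunction) of the $\psi_i$ is exactly the argument fed to $\mu$ on the right-hand side. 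The main obstacle throughout is confined to Part~$1$ and the polarity bookkeeping just described; once the normal form is in hand, Parts~$2$--$5$ are routine first-order equivalences.
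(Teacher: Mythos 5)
Your proposal is correct and follows essentially the same route as the paper: both reduce $\mu$ via Lemma \ref{L:Boolean} (parts 1--4) to a normal form with scope $\chi$ or $\neg\chi$ where $\chi = F(\psi_1,\ldots,\psi_n)$ is built from $\wedge$'s and $\vee$'s (hence stays in the fragment), and then derive Parts 2--5 by distributing the quantifier over the connective and applying De Morgan in the anti-monotone cases. Your explicit remark that the outer negation must be supplied by the core rather than by a $\neg$ in $\mathbb{M}$ is exactly the point the paper's Case 2 of Part 1 handles implicitly.
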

\begin{proof} (Part $1$) Right-to-left inclusion is obvious. In
the other direction, let $\mu$ be an $x$-modality of degree $1$
and let  $\psi_1,\ldots, \psi_n \in
\mathcal{L}^\Theta_x(\mathbb{M})$. Then $\mu^-$ may define either
a monotone or an anti-monotone Boolean function for $P_1, \ldots,
P_n$. So we have $2$ cases to consider:

\emph{Case 1}. If  $\mu^-$ defines a non-constant monotone Boolean
function for $P_1, \ldots, P_n$, then by Lemma \ref{L:Boolean}.2
we have
$$
\models \mu^-(\psi,\ldots, \psi) \leftrightarrow \psi
$$
for every formula $\psi$ in the correspondence language. On the
other hand, by Lemma \ref{L:Boolean}.1, there is a superposition
$F$ of $\wedge$'s and $\vee$'s such that
$$
\models \mu^-(\psi_1,\ldots, \psi_n) \leftrightarrow
F(\psi_1,\ldots, \psi_n).
$$
Adding the two equivalencies together, we get that

$$
\models \mu^-(\psi_1,\ldots, \psi_n) \leftrightarrow
\mu^-(F(\psi_1,\ldots, \psi_n),\ldots, F(\psi_1,\ldots, \psi_n)),
$$
and therefore, that:
$$
\models \mu(\psi_1,\ldots, \psi_n) \leftrightarrow
\mu(F(\psi_1,\ldots, \psi_n),\ldots, F(\psi_1,\ldots, \psi_n)).
$$
Note, further, that since $\wedge, \vee \in \mathbb{M}$, we also
have $F(\psi_1,\ldots, \psi_n) \in
\mathcal{L}^\Theta_x(\mathbb{M})$, and, therefore:
$$
\mu(F(\psi_1,\ldots, \psi_n),\ldots, F(\psi_1,\ldots, \psi_n)) \in
\{ \mu(\psi,\ldots, \psi)\mid \psi \in
\mathcal{L}^\Theta_x(\mathbb{M}) \}.
$$
Since we
do not distinguish between equivalent formulas, the latter
equivalence proves the left-to-right inclusion.

\emph{Case 2}. If  $\mu^-$ defines an anti-monotone Boolean
function for $P_1, \ldots, P_n$, then by Lemma \ref{L:Boolean}.4
we have
$$
\models \mu^-(\psi,\ldots, \psi) \leftrightarrow \neg\psi
$$
for every formula $\psi$ in the correspondence language. On the
other hand, by Lemma \ref{L:Boolean}.3, there is a superposition
$F$ of $\wedge$'s and $\vee$'s such that
$$
\models \mu^-(\psi_1,\ldots, \psi_n) \leftrightarrow \neg
F(\psi_1,\ldots, \psi_n).
$$
Adding the two equivalencies together, we get that

$$
\models \mu^-(\psi_1,\ldots, \psi_n) \leftrightarrow
\mu^-(F(\psi_1,\ldots, \psi_n),\ldots, F(\psi_1,\ldots, \psi_n)),
$$
and therefore, that:
$$
\models \mu(\psi_1,\ldots, \psi_n) \leftrightarrow
\mu(F(\psi_1,\ldots, \psi_n),\ldots, F(\psi_1,\ldots, \psi_n)).
$$
Note, further, that since $\wedge, \vee \in \mathbb{M}$, we also
have $F(\psi_1,\ldots, \psi_n) \in
\mathcal{L}^\Theta_x(\mathbb{M})$. Since we do not distinguish
between equivalent formulas, then, reasoning as in the previous
case, the latter equivalence proves the left-to-right inclusion.

(Part $2$) We build a chain of logical equivalents connecting both
parts of the biconditional in the statement of this part of the
Lemma. Assume that $\mu$ is $\forall$-guarded, and that $\mu^-$,
its propositional core, defines a monotone Boolean function. Then
$\mu$ has a form
$$
\forall x_2\ldots x_{m + 1}(\pi^{m}_1Sx \to \mu^-)
$$
in the assumptions of \ref{E:Aguard}. By Lemma \ref{L:Boolean}.2
we have
$$
\models \mu^-(\psi_i,\ldots, \psi_i) \leftrightarrow \psi_i
$$
for every $1 \leq i \leq u$; therefore the formula $\bigwedge^u_{i
= 1}\mu(\psi_i,\ldots, \psi_i)$ is logically equivalent to
$$
\bigwedge^u_{i = 1}(\forall x_2\ldots x_{m + 1}(\pi^{m}_1Sx \to
\psi_i)).
$$
Using the distributivity of universal quantifier over conjunction
we get then the following chain of logical equivalents for the
latter formula:
$$
\forall x_2\ldots x_{m + 1}\bigwedge^u_{i = 1}(\pi^{m}_1Sx \to
\psi_i),
$$
$$
\forall x_2\ldots x_{m + 1}(\pi^{m}_1Sx \to \bigwedge^u_{i =
1}\psi_i).
$$
Again, using monotonicity of the function defined by $\mu^-$ and
Lemma \ref{L:Boolean}.2, we proceed in this chain of equivalences
as follows:
$$
\forall x_2\ldots x_{m + 1}(\pi^{m}_1Sx \to \mu^-(\bigwedge^u_{i =
1}\psi_i, \ldots, \bigwedge^u_{i = 1}\psi_i)),
$$
$$
\mu(\bigwedge^u_{i = 1}\psi_i,\ldots, \bigwedge^u_{i = 1}\psi_i),
$$
and thus we are done.

(Part $3$) Again we proceed by building an appropriate chain of
logical equivalents. Assume that $\mu$ is $\forall$-guarded, and
that $\mu^-$, its propositional core, defines an anti-monotone
Boolean function. Then $\mu$ has a form
$$
\forall x_2\ldots x_{m + 1}(\pi^{m}_1Sx \to \mu^-)
$$
in the conditions of \ref{E:Aguard}. By Lemma \ref{L:Boolean}.4 we
have
$$
\models \mu^-(\psi_i,\ldots, \psi_i) \leftrightarrow \neg\psi_i
$$
for every $1 \leq i \leq u$; therefore the formula $\bigwedge^u_{i
= 1}\mu(\psi_i,\ldots, \psi_i)$ is logically equivalent to
$$
\bigwedge^u_{i = 1}(\forall x_2\ldots x_{m + 1}(\pi^{m}_1Sx \to
\neg\psi_i)).
$$
Using the distributivity of universal quantifier over conjunction
we get then the following chain of logical equivalents for the
latter formula:
$$
\forall x_2\ldots x_{m + 1}\bigwedge^u_{i = 1}(\pi^{m}_1Sx \to
\neg\psi_i),
$$
$$
\forall x_2\ldots x_{m + 1}(\pi^{m}_1Sx \to \bigwedge^u_{i =
1}\neg\psi_i),
$$
$$
\forall x_2\ldots x_{m + 1}(\pi^{m}_1Sx \to \neg\bigvee^u_{i =
1}\psi_i),
$$
Again, using anti-monotonicity of the function defined by $\mu^-$
and Lemma \ref{L:Boolean}.4 we proceed in this chain of
equivalences as follows:
$$
\forall x_2\ldots x_{m + 1}(\pi^{m}_1Sx \to \mu^-(\bigvee^u_{i =
1}\psi_i, \ldots, \bigvee^u_{i = 1}\psi_i)),
$$
$$
\mu(\bigvee^u_{i = 1}\psi_i,\ldots, \bigvee^u_{i = 1}\psi_i),
$$
and thus we are done.

Parts $4$ and $5$ of the Lemma are dual to the parts $2$ and $3$
and can be proven by a similar method.
\end{proof}
We turn now to the key Lemma about asimulations over saturated
models:

\begin{lemma}\label{L:mu-sat}
Let $\mathcal{L}^\Theta_x(\mathbb{M})$ be a standard $x$-fragment
of the correspondence language, such that $\mathbb{M} = \{
\mu_1,\ldots, \mu_s \} \supseteq \{ \wedge, \vee, \top, \bot \}$,
let $M_1, M_2$ be $\omega$-saturated $\Theta$-models, and let $A
\in W(M_1, M_2)$ be such that
$$
A = \bigcup Rel(\mathcal{L}^\Theta_x(\mathbb{M}),M_1, M_2).
$$

Further, assume that $\mu$ is a standard $x$-g.c. (not necessarily
in $\mathbb{M}$) such that $\delta(\mu) \geq 1$, $\mu^0,\ldots,
\mu^{\delta(\mu) - 1}$ is the set of ancestors of $\mu$, and for
$1 \leq i \leq \delta(\mu)$ define $A_i \in W(M_1, M_2)$, such
that
$$
A_i = \bigcup Rel(\{ \mu^{i-1}(\psi_1,\ldots, \psi_n)\mid
\psi_1,\ldots, \psi_n \in \mathcal{L}^\Theta_x(\mathbb{M}) \},M_1,
M_2).
$$
Then for any $B \in W(M_1, M_2)$, such that
$$
B \in Rel(\{ \mu(\psi_1,\ldots, \psi_n)\mid \psi_1,\ldots, \psi_n
\in \mathcal{L}^\Theta_x(\mathbb{M}) \},M_1, M_2),
$$
and for some $C \in \{ A \} \cup [\mu^0](\{ A \})$, the tuple
$\langle C,A_2,\ldots, A_{\delta(\mu)}, B\rangle$ is in $[\mu](\{
A \})$.
\end{lemma}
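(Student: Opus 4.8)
The plan is to induct on the shape of the standard connective $\mu$: since $\delta(\mu)\ge 1$, it is either a flat connective of degree $1$ or a regular connective of degree $2$, and in each case I would unfold the definition of $[\mu](\{A\})$ and exhibit the required tuple. Membership $\langle C,A_2,\ldots,A_{\delta(\mu)},B\rangle\in[\mu](\{A\})$ splits into two obligations: placing the initial segment $\langle C,A_2,\ldots,A_{\delta(\mu)}\rangle$ into $[\mu^-](\{A\})$ (which is where the freedom in the choice $C\in\{A\}\cup[\mu^0](\{A\})$ is spent), and checking that the output relation $B$ satisfies the back/forth clause attached to $\mu$ (one of (back), (forth), (s-back), (s-forth)), with the last relation of the inner tuple — that is $A_{\delta(\mu)}$, which equals $C$ when $\delta(\mu)=1$ and $A_2$ when $\delta(\mu)=2$ — playing the role of the endpoint relation. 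The first obligation is either immediate from the choice of $C$ (when $\mu^-$ is a modality, so that $[\mu^0](\{A\})$ is a singleton $\{A\}$ or $\{A^{-1}\}$) or is itself an instance of the second obligation one degree lower (when $\mu^-$ is special, so that $A_2$ must be made to satisfy (s-back)/(s-forth)).

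Everything thus reduces to one uniform claim: a canonical relation of the form $\bigcup Rel(\Gamma,M_1,M_2)$ satisfies the back/forth clause of $\mu$ relative to the appropriate endpoint relation. Here I exploit that, invariance being preserved under unions, each of $A$ and the $A_i$ is the \emph{largest} relation in its $Rel$, so that $c\,A\,d$ holds precisely when every fragment formula true at $c$ is true at $d$, and $c\,A_i\,d$ records inclusion of the satisfied $\mu^{i-1}$-applications; $B$ meanwhile is some fixed relation under which every $\mu$-application is invariant. The verification is then the Hennessy--Milner argument over $\omega$-saturated models: to produce, say, the witnessing successor chain demanded by (forth), I assemble the type consisting of the guard constraints together with all relevant formulas (fragment formulas, or $\mu^{i-1}$-applications) true at the given endpoint, realize it in the other model using $\omega$-saturation, and read off from the realizer an element related by the endpoint relation — the maximality of $A$ (resp. $A_i$) being exactly what turns ``satisfies all those formulas'' into ``is related''.

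The engine of this argument is finite satisfiability, and this is where the distributivity and normal-form lemmas enter. Closure of the fragment under $\wedge,\vee,\top,\bot$ lets me conjoin or disjoin any finite portion of the type into a single fragment formula $\chi$; Lemma~\ref{L:modalities} then collapses a finite conjunction (dually, disjunction) of modal applications into one application of the form $\mu(\chi,\ldots,\chi)$, and for a special ancestor the positive--negative normal form of Lemma~\ref{L:Boolean} (parts $8$ and $9$) does the same job clause by clause, so that the combined ancestor-application can be rewritten, via closure under $\wedge,\vee$, as a genuine $\mu$-application. That single application is invariant with respect to $B$ by hypothesis, so transferring its truth across $B$ yields exactly the finite witness needed, and $\omega$-saturation upgrades finite witnesses to a realized type.

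The delicate part — and the reason the result is confined to standard connectives — is the special case, where the propositional core is a rest function and the clause to verify is (s-back)/(s-forth), demanding not one but two chains issuing from the same point, with their endpoints tied to the single target by $A$ and by $A^{-1}$ respectively. I expect this to be the crux: I would split the core into its monotone (positive-clause) and anti-monotone (negative-clause) parts via Lemma~\ref{L:Boolean} (parts $6$ and $9$ for the $\forall$-special case, using that such a core is $FTF$ but non-$TFT$) and run two coordinated type-realizations, one built from the formulas true at the target and matched along $A$, the other from the formulas false at it and matched along $A^{-1}$; the non-$TFT$ (resp. non-$FTF$) structure is precisely what guarantees that the single $\forall$-guarded (resp. $\exists$-guarded) truth of the transferred $\mu$-application is strong enough to feed both realizations at once. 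Checking that the two constructions can be carried out compatibly, and that the reassembly of ancestor-applications into one transferable application survives the simultaneous presence of positive and negative clauses, is the main obstacle I would budget for.
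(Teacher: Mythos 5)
Your plan follows the paper's proof essentially step for step: the same case split (flat degree-$1$ connectives sorted by the type of their propositional core, then regular degree-$2$ connectives handled by invoking the claim for $\mu^-$ to place $\langle C,A_2\rangle$ in $[\mu^-](\{A\})$ and then checking the back/forth clause for $B$ against $A_2$), the same saturation-plus-compactness engine in which finite portions of the relevant type are collapsed into a single fragment formula via closure under $\wedge,\vee,\top,\bot$, with Lemma~\ref{L:modalities} doing the collapsing for modal ancestors and Lemma~\ref{L:Boolean}.8--9 together with the $TFT$/$FTF$ substitutions of Lemma~\ref{L:Boolean}.5--6 doing it for rest ancestors, and maximality of $A$ and $A_i$ converting ``satisfies all those formulas'' into ``is related''. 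The one obstacle you budget for largely dissolves in the paper's treatment: the two chains demanded by \eqref{E:cond-Aspec}/\eqref{E:cond-Espec} are produced by two entirely independent compactness-and-saturation arguments sharing only the starting point (one built from $Tr$ and matched along $A$, one from $Fa$ and matched along $A^{-1}$, each transferred through its own substitution instance of $\mu$ supplied by Lemma~\ref{L:Boolean}.7 rather than by parts $6$ and $9$), so no compatibility between the two realizations ever needs to be checked.
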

\begin{proof}

We need to distinguish between $2$ sets of cases corresponding to
the two kinds of standard connectives, that is to say, arbitrary
guarded connectives of degree $1$ and regular connectives of
degree $2$ respectively. To reduce the length of formulas below,
we introduce the following abbreviations for arbitrary variable
$y$, natural $j \in \{ 1,2 \}$, and $a \in U_j$:
$$
Tr^j_y(a):= Tr(\mathcal{L}^\Theta_y(\mathbb{M}), M_j, a),
$$
and
$$
Fa^j_y(a):= Fa(\mathcal{L}^\Theta_y(\mathbb{M}), M_j, a).
$$
Further, we set that:
$$
\neg Fa^j_y(a) := \{ \neg\psi(x) \mid \psi(x) \in Fa^j_y(a) \}.
$$

\emph{Case 1}. Let $\mu$ be a flat connective. Then one of the
following cases holds:

\emph{Case 1.1}. Assume that $\mu$ has a constant propositional
core. Then we have $\mu \in \nu_x(\forall, \bot) \cup
\nu_x(\exists, \top)$ and also that $\{ A \} \cup [\mu^0](\{ A \})
= W(M_1,M_2)$. We will show that for the binary relation
$$
C = \bigcup W(M_1,M_2) = (U_1\times U_2) \cup (U_2\times U_1) \in
W(M_1,M_2)
$$
and for every $B$, satisfying the lemma hypothesis, we have
$\langle C, B\rangle \in [\mu](\{ A \})$.

Thus, assume that $\mu \in \nu_x(\forall, \bot)$, so that $\mu$
has the form $\forall x_2\ldots x_{m + 1}(\pi^{m}_1Sx \to \bot)$
in the assumptions of \eqref{E:Aguard}. Assume that $\{ r,t \} =
\{ 1,2 \}$ and that we have $a_1 \in U_r$, $\bar{b}_{m+1} \in
U_t$, and $a_1\mathrel{B}b_1$. Moreover, assume that
$\pi^m_1S^tb$. Then we have $b_1 \not\models_t \mu$ and hence, by
$a_1\mathrel{B}b_1$, that $a_1 \not\models_r \mu$. But the latter
means that the formula $\pi^m_1Sx$ is satisfiable at $(M_r, a_1)$
by some $a_2,\ldots, a_{m+1}  \in U_r$. So for any such
$a_2,\ldots, a_{m+1}$ we will have both $\pi^m_1S^ra$ and,
moreover, $\langle a_{m+1},b_{m+1}\rangle \in C$, therefore,
condition \eqref{E:cond-A} for $\langle A, B\rangle$ is
satisfied,and $\langle C, B\rangle \in [\mu](\{ A \})$.

The case $\mu \in \nu_x(\exists, \top)$ is similar.

\emph{Case 1.2}. Assume that $\mu \in \nu_x(\forall, f)$, where
$f$ is non-constant monotone Boolean function. Then we have $\{ A
\} \cup [\mu^0](\{ A \}) = \{ A \}$, and we need to show that for
every $B$, satisfying the lemma hypothesis, we have $\langle A,
B\rangle \in [\mu](\{ A \})$, that is to say, that $\langle A,
B\rangle$ satisfies condition \eqref{E:cond-A}. From our
assumption that

$$
B \in Rel(\{ \mu(\psi_1,\ldots, \psi_n)\mid \psi_1,\ldots, \psi_n
\in \mathcal{L}^\Theta_x(\mathbb{M}) \},M_1, M_2)
$$
we infer that:

\begin{equation}\label{E:refl-1}
B \in Rel(\{ \mu(\psi,\ldots, \psi)\mid \psi \in
\mathcal{L}^\Theta_x(\mathbb{M}) \},M_1, M_2)
\end{equation}

Since $\mu$ is $\forall$-guarded, we can assume that $\mu$ has a
form
$$
\forall x_2\ldots x_{m + 1}(\pi^{m}_1Sx \to \xi(P_1(x_{m +
1}),\ldots, P_n(x_{m + 1}))),
$$
in the assumptions of \eqref{E:Aguard}, where $\xi$ defines $f$
for $P_1(x_{m + 1}),\ldots, P_n(x_{m + 1})$. By Lemma
\ref{L:Boolean}.2 we get, for any $\psi \in
\mathcal{L}^\Theta_x(\mathbb{M})$, that:
$$
\forall x_2\ldots x_{m + 1}(\pi^{m}_1Sx \to \xi(\psi(x_{m +
1}),\ldots, \psi(x_{m + 1})))
$$
is logically equivalent to
$$
\forall x_2\ldots x_{m + 1}(\pi^{m}_1Sx \to \psi(x_{m + 1})).
$$
Therefore, from \eqref{E:refl-1} we can infer that

\begin{equation}\label{E:refl-2}
B \in Rel(\{ \mu'(\psi)\mid \psi \in
\mathcal{L}^\Theta_x(\mathbb{M}) \},M_1, M_2),
\end{equation}
where $\mu'$ is the following $x$-g.c.:

$$
\forall x_2\ldots x_{m + 1}(\pi^{m}_1Sx \to P_1(x_{m + 1})).
$$

We proceed now to verification of condition \eqref{E:cond-A}.
Assume that $\{ r,t \} = \{ 1,2 \}$ and that we have $a_1 \in
U_r$, $\bar{b}_{m+1} \in U_t$, and $a_1\mathrel{B}b_1$. Moreover,
assume that

$$
   \pi^m_1S^tb.
$$
Then consider the set
$Fa(\mathcal{L}^\Theta_{x_{m+1}}(\mathbb{M}), M_t, b_{m+1})$, that
is to say, $Fa^t_{x_{m+1}}(b_{m+1})$. Since $\bot \in
\mathcal{L}^\Theta_{x_{m+1}}(\mathbb{M})$, this set is non-empty,
and since $\vee \in \mathbb{M}$, then for every finite $\Delta
\subseteq Fa^t_{x_{m+1}}(b_{m+1})$, we have $\vee\Delta \in
Fa^t_{x_{m+1}}(b_{m+1})$. But then we have $b_1 \not\models_t
\mu'(\vee\Delta)$ and hence, by $a_1\mathrel{B}b_1$ and
\eqref{E:refl-2}, that $a_1 \not\models_r \mu'(\vee\Delta)$. But
the latter means that every finite subset of the set
$$
\{ S_1(a_1, x_2), \pi^m_2Sx \} \cup \neg Fa^t_{x_{m+1}}(b_{m+1})
$$
is satisfiable at  $M_r/a_1$. Therefore, by compactness of
first-order logic, this set is consistent with $Th(M_r/a_1)$ and,
by $\omega$-saturation of both $M_1$ and $M_2$, it must be
satisfied in $M_r/a_1$ by some $a_2,\ldots, a_{m+1}  \in U_r$. So
for any such $a_2,\ldots, a_{m+1}$ we will have both $\pi^m_1S^ra$
and, moreover
$$
a_{m+ 1} \models_r \neg Fa^t_{x_{m+1}}(b_{m+1}).
$$

Thus, by independence of truth at a sequence of elements from the
choice of free variables in a formula, we will also have
$$
a_{m+1} \models_r \neg Fa^t_{x}(b_{m+1}).
$$
This means that
$$
\{ \langle a_{m+1},b_{m+1}\rangle \} \in
Rel(\mathcal{L}^\Theta_x(\mathbb{M}),M_1, M_2),
$$
therefore, by definition of $A$ we get that
$a_{m+1}\mathrel{A}b_{m+1}$, and thus condition \eqref{E:cond-A}
for $\langle A, B\rangle$ is satisfied. Whence we conclude that
$\langle A, B\rangle \in [\mu](\{ A \})$.

\emph{Case 1.3}. Assume that $\mu \in \nu_x(\exists, f)$, where
$f$ is non-constant monotone Boolean function. Then we have $\{ A
\} \cup [\mu^0](\{ A \}) = \{ A \}$, and we need to show that for
every $B$, satisfying the lemma hypothesis, we have $\langle A,
B\rangle \in [\mu](\{ A \})$, that is to say, that $\langle A,
B\rangle$ satisfies condition \eqref{E:cond-E}. Arguing as in the
previous case, we infer \eqref{E:refl-1}.

Since $\mu$ is $\exists$-guarded, we can assume that $\mu$ has a
form
$$
\exists x_2\ldots x_{m + 1}(\pi^{m}_1Sx \wedge \xi(P_1(x_{m +
1}),\ldots, P_n(x_{m + 1}))),
$$
in the assumptions of \eqref{E:Eguard}, where $\xi$ defines $f$
for $P_1(x_{m + 1}),\ldots, P_n(x_{m + 1})$. By Lemma
\ref{L:Boolean}.2 we get, for any $\psi \in
\mathcal{L}^\Theta_x(\mathbb{M})$, that:
$$
\exists x_2\ldots x_{m + 1}(\pi^{m}_1Sx \wedge \xi(\psi(x_{m +
1}),\ldots, \psi(x_{m + 1})))
$$
is logically equivalent to
$$
\exists x_2\ldots x_{m + 1}(\pi^{m}_1Sx \wedge \psi(x_{m + 1})).
$$
Therefore, from \eqref{E:refl-1} we can infer that

\begin{equation}\label{E:refl-3}
B \in Rel(\{ \mu'(\psi)\mid \psi \in
\mathcal{L}^\Theta_x(\mathbb{M}) \},M_1, M_2),
\end{equation}
where $\mu'$ is the following $x$-g.c.:

$$
\exists x_2\ldots x_{m + 1}(\pi^{m}_1Sx \wedge P_1(x_{m + 1})).
$$

We proceed now to verification of condition \eqref{E:cond-E}.
Assume that $\{ r,t \} = \{ 1,2 \}$ and that we have
$\bar{a}_{m+1} \in U_r$, $b_1 \in U_t$, and $a_1\mathrel{B}b_1$.
Moreover, assume that $\pi^m_1S^ra$. Then consider the set
$Tr(\mathcal{L}^\Theta_{x_{m+1}}(\mathbb{M}), M_r, a_{m+1})$, that
is to say, $Tr^r_{x_{m+1}}(a_{m+1})$. Since $\top \in
\mathcal{L}^\Theta_{x_{m+1}}(\mathbb{M})$, this set is non-empty,
and since $\wedge \in \mathbb{M}$, then for every finite $\Gamma
\subseteq Tr^r_{x_{m+1}}(a_{m+1})$, we have $\wedge\Gamma \in
Tr^r_{x_{m+1}}(a_{m+1})$. But then we have $a_1 \models_r
\mu'(\wedge\Gamma)$ and hence, by $a_1\mathrel{B}b_1$ and
\eqref{E:refl-3}, that $b_1 \models_t \mu'(\wedge\Gamma)$. But the
latter means that every finite subset of the set
$$
\{ S_1(b_1, x_2), \pi^m_2Sx \} \cup Tr^r_{x_{m+1}}(a_{m+1})
$$
is satisfiable at  $M_t/b_1$. Therefore, by compactness of
first-order logic, this set is consistent with $Th(M_t/b_1)$ and,
by $\omega$-saturation of both $M_1$ and $M_2$, it must be
satisfied in $M_t/b_1$ by some $b_2,\ldots, b_{m+1}  \in U_t$. So
for any such $b_2,\ldots, b_{m+1}$ we will have both $\pi^m_1S^tb$
and, moreover
$$
b_{m+ 1} \models_t Tr^r_{x_{m+1}}(a_{m+1}).
$$

Thus, by independence of truth at a sequence of elements from the
choice of free variables in a formula, we will also have
$$
b_{m+1} \models_t Tr^r_{x}(a_{m+1})
$$
This means that
$$
\{ \langle a_{m+1},b_{m+1}\rangle \} \in
Rel(\mathcal{L}^\Theta_x(\mathbb{M}),M_1, M_2),
$$
therefore, by definition of $A$ we get that
$a_{m+1}\mathrel{A}b_{m+1}$, and thus condition \eqref{E:cond-E}
for $\langle A, B\rangle$ is satisfied. Whence we conclude that
$\langle A, B\rangle \in [\mu](\{ A \})$.

\emph{Case 1.4}. Assume that $\mu \in \nu_x(\forall, f)$, where
$f$ is non-constant anti-monotone Boolean function. Then we have
$\{ A \} \cup [\mu^0](\{ A \}) = \{ A, A^{-1} \}$. We will show
that for every $B$, satisfying the lemma hypothesis, we have
$\langle A^{-1}, B\rangle \in [\mu](\{ A \})$, that is to say,
that $\langle A^{-1}, B\rangle$ satisfies condition
\eqref{E:cond-A}. Arguing as in the previous case, we infer
\eqref{E:refl-1}.

Since $\mu$ is $\forall$-guarded, we can assume that $\mu$ has a
form
$$
\forall x_2\ldots x_{m + 1}(\pi^{m}_1Sx \to \xi(P_1(x_{m +
1}),\ldots, P_n(x_{m + 1}))),
$$
in the assumptions of \eqref{E:Aguard}, where $\xi$ defines $f$
for $P_1(x_{m + 1}),\ldots, P_n(x_{m + 1})$. By Lemma
\ref{L:Boolean}.4 we get, for any $\psi \in
\mathcal{L}^\Theta_x(\mathbb{M})$, that:
$$
\forall x_2\ldots x_{m + 1}(\pi^{m}_1Sx \to \xi(\psi(x_{m +
1}),\ldots, \psi(x_{m + 1})))
$$
is logically equivalent to
$$
\forall x_2\ldots x_{m + 1}(\pi^{m}_1Sx \to \neg\psi(x_{m + 1})).
$$
Therefore, from \eqref{E:refl-1} we can infer that

\begin{equation}\label{E:refl-4}
B \in Rel(\{ \mu'(\psi)\mid \psi \in
\mathcal{L}^\Theta_x(\mathbb{M}) \},M_1, M_2),
\end{equation}
where $\mu'$ is the following $x$-g.c.:

$$
\forall x_2\ldots x_{m + 1}(\pi^{m}_1Sx \to \neg P_1(x_{m + 1})).
$$

We proceed now to verification of condition \eqref{E:cond-A}.
Assume that $\{ r,t \} = \{ 1,2 \}$ and that we have $a_1 \in
U_r$, $\bar{b}_{m+1} \in U_t$, and $a_1\mathrel{B}b_1$. Moreover,
assume that $\pi^m_1S^tb$. Then consider the set
$Tr^t_{x_{m+1}}(b_{m+1})$. This set is non-empty, and since
$\wedge \in \mathbb{M}$, then for every finite $\Gamma \subseteq
Tr^t_{x_{m+1}}(b_{m+1})$, we have $\wedge\Gamma \in
Tr^t_{x_{m+1}}(b_{m+1})$. But then we have $b_1 \not\models_t
\mu'(\wedge\Gamma)$ and hence, by $a_1\mathrel{B}b_1$ and
\eqref{E:refl-4}, that $a_1 \not\models_r \mu'(\wedge\Gamma)$. But
the latter means that every finite subset of the set
$$
\{ S_1(a_1, x_2), \pi^m_2Sx \} \cup Tr^t_{x_{m+1}}(b_{m+1})
$$
is satisfiable at  $M_r/a_1$. Therefore, by compactness of
first-order logic, this set is consistent with $Th(M_r/a_1)$ and,
by $\omega$-saturation of both $M_1$ and $M_2$, it must be
satisfied in $M_r/a_1$ by some $a_2,\ldots, a_{m+1}  \in U_r$. So
for any such $a_2,\ldots, a_{m+1}$ we will have both $\pi^m_1S^ra$
and, moreover
$$
a_{m+ 1} \models_r Tr^t_{x_{m+1}}(b_{m+1}).
$$
Thus, by independence of truth at a sequence of elements from the
choice of free variables in a formula, we will also have
$$
a_{m+1} \models_r Tr^t_{x}(b_{m+1}).
$$
This means that
$$
\{ \langle b_{m+1},a_{m+1}\rangle \} \in
Rel(\mathcal{L}^\Theta_x(\mathbb{M}),M_1, M_2),
$$
therefore, by definition of $A$ we get that
$b_{m+1}\mathrel{A}a_{m+1}$. Hence we get
$a_{m+1}\mathrel{A^{-1}}b_{m+1}$ and thus condition
\eqref{E:cond-A} for $\langle A^{-1}, B\rangle$ is satisfied.
Whence we conclude that

\noindent$\langle A^{-1}, B\rangle \in [\mu](\{ A \})$.

\emph{Case 1.5}. Assume that $\mu \in \nu_x(\exists, f)$, where
$f$ is non-constant anti-monotone Boolean function. Then we have
$\{ A \} \cup [\mu^0](\{ A \}) = \{ A, A^{-1} \}$. We will show
that for every $B$, satisfying the lemma hypothesis, we have
$\langle A^{-1}, B\rangle \in [\mu](\{ A \})$, that is to say,
that $\langle A^{-1}, B\rangle$ satisfies condition
\eqref{E:cond-E}. Arguing as in the previous case, we infer
\eqref{E:refl-1}.

Since $\mu$ is $\exists$-guarded, we can assume that $\mu$ has a
form
$$
\exists x_2\ldots x_{m + 1}(\pi^{m}_1Sx \wedge \xi(P_1(x_{m +
1}),\ldots, P_n(x_{m + 1}))),
$$
in the assumptions of \eqref{E:Eguard}, where $\xi$ defines $f$
for $P_1(x_{m + 1}),\ldots, P_n(x_{m + 1})$. By Lemma
\ref{L:Boolean}.4 we get, for any $\psi \in
\mathcal{L}^\Theta_x(\mathbb{M})$, that:
$$
\exists x_2\ldots x_{m + 1}(\pi^{m}_1Sx \wedge \xi(\psi(x_{m +
1}),\ldots, \psi(x_{m + 1})))
$$
is logically equivalent to
$$
\exists x_2\ldots x_{m + 1}(\pi^{m}_1Sx \wedge \neg\psi(x_{m +
1})).
$$
Therefore, from \eqref{E:refl-1} we can infer that

\begin{equation}\label{E:refl-5}
B \in Rel(\{ \mu'(\psi)\mid \psi \in
\mathcal{L}^\Theta_x(\mathbb{M}) \},M_1, M_2),
\end{equation}
where $\mu'$ is the following $x$-g.c.:

$$
\exists x_2\ldots x_{m + 1}(\pi^{m}_1Sx \wedge \neg P_1(x_{m +
1})).
$$

We proceed now to verification of condition \eqref{E:cond-E}.
Assume that $\{ r,t \} = \{ 1,2 \}$ and that we have
$\bar{a}_{m+1} \in U_r$, $b_1 \in U_t$, and $a_1\mathrel{B}b_1$.
Moreover, assume that $\pi^m_1S^ra$. Then consider the set
$Fa^r_{x}(a_{m+1})$. This set is non-empty, and since $\vee \in
\mathbb{M}$, then for every finite $\Delta \subseteq
Fa^r_{x_{m+1}}(a_{m+1})$, we have $ \vee\Delta \in
Fa^r_{x_{m+1}}(a_{m+1})$. But then we have $a_1 \models_r
\mu'(\vee\Delta)$ and hence, by $a_1\mathrel{B}b_1$ and
\eqref{E:refl-5}, that $b_1 \models_t \mu'(\vee\Delta)$. But the
latter means that every finite subset of the set
$$
\{ S_1(b_1, x_2), \pi^m_2Sx \} \cup \neg Fa^r_{x_{m+1}}(a_{m+1})
$$
is satisfiable at  $M_t/b_1$. Therefore, by compactness of
first-order logic, this set is consistent with $Th(M_t/b_1)$ and,
by $\omega$-saturation of both $M_1$ and $M_2$, it must be
satisfied in $M_t/b_1$ by some $b_2,\ldots, b_{m+1}  \in U_t$. So
for any such $b_2,\ldots, b_{m+1}$ we will have both $\pi^m_1S^tb$
and, moreover
$$
b_{m+ 1} \models_t  \neg Fa^r_{x_{m+1}}(a_{m+1}).
$$

Thus, by independence of truth at a sequence of elements from the
choice of free variables in a formula, we will also have
$$
b_{m+1} \models_t  \neg Fa^r_{x}(a_{m+1}).
$$
This means that
$$
\{ \langle b_{m+1},a_{m+1}\rangle \} \in
Rel(\mathcal{L}^\Theta_x(\mathbb{M}),M_1, M_2),
$$
therefore, by definition of $A$ we get that
$b_{m+1}\mathrel{A}a_{m+1}$. Hence we have
$a_{m+1}\mathrel{A^{-1}}b_{m+1}$, and thus condition
\eqref{E:cond-E} for $\langle A^{-1}, B\rangle$ is satisfied.
Whence we conclude that $\langle A^{-1}, B\rangle \in [\mu](\{ A
\})$.

\emph{Case 1.6}. Let $\mu \in \nu_x(\forall, f)$, where $f$ is a
rest Boolean $TFT$-function. Then we have $\{ A \} \cup [\mu^0](\{
A \}) = \{ A, A \cap A^{-1} \}$. We will show that for every $B$,
satisfying the lemma hypothesis, we have $\langle A \cap A^{-1},
B\rangle \in [\mu](\{ A \})$, that is to say, that $\langle A \cap
A^{-1}, B\rangle$ satisfies condition \eqref{E:cond-A}.

Since $\mu$ is $\forall$-guarded, we can assume that $\mu$ has the
form
$$
\forall x_2\ldots x_{m + 1}(\pi^{m}_1Sx \to \xi(P_1(x_{m +
1}),\ldots, P_n(x_{m + 1}))),
$$
in the assumptions of \eqref{E:Aguard}, where $\xi$ defines $f$
for $P_1(x_{m + 1}),\ldots, P_n(x_{m + 1})$. By Lemma
\ref{L:Boolean}.5 we get that for arbitrary $\psi_1,\psi_2 \in
\mathcal{L}^\Theta_x(\mathbb{M})$ there exist
$\tau_1,\ldots,\tau_n \in \{ \psi_1, \psi_2, \psi_1 \wedge \psi_2,
\top, \bot \}$ such that the formula

$$
\forall x_2\ldots x_{m + 1}(\pi^{m}_1Sx \to \xi(\tau_1(x_{m +
1}),\ldots, \tau_n(x_{m + 1})))
$$
is logically equivalent to
$$
\forall x_2\ldots x_{m + 1}(\pi^{m}_1Sx \to (\neg\psi_1(x_{m +
1})\vee \psi_2(x_{m + 1}))).
$$
Therefore, by our assumptions that $\mathbb{M} \supseteq \{
\wedge, \vee, \top, \bot \}$ and that
$$
B \in Rel(\{ \mu(\psi_1,\ldots, \psi_n)\mid \psi_1,\ldots, \psi_n
\in \mathcal{L}^\Theta_x(\mathbb{M}) \},M_1, M_2)
$$
we infer that:
\begin{equation}\label{E:refl-6}
B \in Rel(\{ \mu'(\psi_1,\psi_2)\mid \psi_1, \psi_2 \in
\mathcal{L}^\Theta_x(\mathbb{M}) \},M_1, M_2),
\end{equation}

where $\mu'$ is the following $x$-g.c.:

$$
\forall x_2\ldots x_{m + 1}(\pi^{m}_1Sx \to (\neg P_1(x_{m + 1})
\vee P_2(x_{m + 1}))),
$$

We proceed now to verification of condition \eqref{E:cond-A}.
Assume that $\{ r,t \} = \{ 1,2 \}$ and that we have $a_1 \in
U_r$, $\bar{b}_{m+1} \in U_t$, and $a_1\mathrel{B}b_1$. Moreover,
assume that $\pi^m_1S^tb$. Then consider the sets
$Tr^t_{x_{m+1}}(b_{m+1})$ and $Fa^t_{x_{m+1}}(b_{m+1})$. These
sets are both non-empty, and since $\wedge, \vee \in \mathbb{M}$,
then for every finite $\Gamma \subseteq Tr^t_{x_{m+1}}(b_{m+1})$
and every finite $\Delta \subseteq Fa^t_{x_{m+1}}(b_{m+1})$, we
have
$$
\wedge\Gamma \in Tr^t_{x_{m+1}}(b_{m+1}), \vee \Delta\in
Fa^t_{x_{m+1}}(b_{m+1}).
$$
But then we have $b_1 \not\models_t \mu'(\wedge\Gamma, \vee
\Delta)$ and hence, by $a_1\mathrel{B}b_1$ and \eqref{E:refl-6},
that $a_1 \not\models_r \mu'(\wedge\Gamma, \vee \Delta)$. But the
latter means that every finite subset of the set
$$
\{ S_1(a_1, x_2), \pi^m_2Sx \} \cup Tr^t_{x_{m+1}}(b_{m+1}) \cup
\neg Fa^t_{x_{m+1}}(b_{m+1})
$$
is satisfiable at  $M_r/a_1$. Therefore, by compactness of
first-order logic, this set is consistent with $Th(M_r/a_1)$ and,
by $\omega$-saturation of both $M_1$ and $M_2$, it must be
satisfied in $M_r/a_1$ by some $a_2,\ldots, a_{m+1}  \in U_r$. So
for any such $a_2,\ldots, a_{m+1}$ we will have both $\pi^m_1S^ra$
and, moreover
$$
a_{m+ 1} \models_r Tr^t_{x_{m+1}}(b_{m+1}) \cup \neg
Fa^t_{x_{m+1}}(b_{m+1}).
$$

Thus, by independence of truth at a sequence of elements from the
choice of free variables in a formula, we will also have
$$
a_{m+1} \models_r Tr^t_{x}(b_{m+1}) \cup \neg Fa^t_x(b_{m+1}).
$$
This means that
$$
\{ \langle b_{m+1},a_{m+1}\rangle, \langle a_{m+1},b_{m+1}\rangle
\} \in Rel(\mathcal{L}^\Theta_x(\mathbb{M}),M_1, M_2),
$$
therefore, by definition of $A$ we get that
$a_{m+1}\mathrel{A}b_{m+1}$ and $b_{m+1}\mathrel{A}a_{m+1}$. Hence
we get $a_{m+1}\mathrel{A \cap A^{-1}}b_{m+1}$ and thus condition
\eqref{E:cond-A} for $\langle A \cap A^{-1}, B\rangle$ is
satisfied. Whence we conclude that $\langle A \cap A^{-1},
B\rangle \in [\mu](\{ A \})$.

\emph{Case 1.7}. Let $\mu \in \nu_x(\exists, f)$, where $f$ is a
rest Boolean $FTF$-function. Then we have $\{ A \} \cup [\mu^0](\{
A \}) = \{ A, A \cap A^{-1} \}$. We will show that for every $B$
satisfying the lemma hypothesis, we have $\langle A \cap A^{-1},
B\rangle \in [\mu](\{ A \})$, that is to say, that $\langle A \cap
A^{-1}, B\rangle$ satisfies condition \eqref{E:cond-E}.

Since $\mu$ is $\exists$-guarded, we can assume that $\mu$ has the
form
$$
\exists x_2\ldots x_{m + 1}(\pi^{m}_1Sx \wedge \xi(P_1(x_{m +
1}),\ldots, P_n(x_{m + 1}))),
$$
in the assumptions of \eqref{E:Eguard}, where $\xi$ defines $f$
for $P_1(x_{m + 1}),\ldots, P_n(x_{m + 1})$. By Lemma
\ref{L:Boolean}.6 we get that for arbitrary $\psi_1,\psi_2 \in
\mathcal{L}^\Theta_x(\mathbb{M})$ there exist
$\tau_1,\ldots,\tau_n \in \{ \psi_1, \psi_2, \psi_1 \wedge \psi_2,
\top, \bot \}$ such that the formula

$$
\exists x_2\ldots x_{m + 1}(\pi^{m}_1Sx \wedge \xi(\tau_1(x_{m +
1}),\ldots, \tau_n(x_{m + 1})))
$$
is logically equivalent to
$$
\exists x_2\ldots x_{m + 1}(\pi^{m}_1Sx \wedge (\psi_1(x_{m +
1})\wedge \neg\psi_2(x_{m + 1}))).
$$
Therefore, by our assumptions that $\mathbb{M} \supseteq \{
\wedge, \vee, \top, \bot \}$ and that
$$
B \in Rel(\{ \mu(\psi_1,\ldots, \psi_n)\mid \psi_1,\ldots, \psi_n
\in \mathcal{L}^\Theta_x(\mathbb{M}) \},M_1, M_2)
$$
we infer that:
\begin{equation}\label{E:refl-7}
B \in Rel(\{ \mu'(\psi_1,\psi_2)\mid \psi_1, \psi_2 \in
\mathcal{L}^\Theta_x(\mathbb{M}) \},M_1, M_2),
\end{equation}

where $\mu'$ is the following $x$-g.c.:

$$
\exists x_2\ldots x_{m + 1}(\pi^{m}_1Sx \wedge (P_1(x_{m + 1})
\wedge \neg P_2(x_{m + 1}))),
$$

We proceed now to verification of condition \eqref{E:cond-E}.
Assume that $\{ r,t \} = \{ 1,2 \}$ and that we have
$\bar{a}_{m+1} \in U_r$, $b_1 \in U_t$, and $a_1\mathrel{B}b_1$.
Moreover, assume that $\pi^m_1S^ra$. Then consider the sets
$Tr^r_{x_{m+1}}(a_{m+1})$ and $Fa^r_{x_{m+1}}(a_{m+1})$. These
sets are non-empty, and since $\wedge,\vee \in \mathbb{M}$, then
for every finite $\Gamma \subseteq Tr^r_{x_{m+1}}(a_{m+1})$ and
every finite $\Delta \subseteq Fa^r_{x_{m+1}}(a_{m+1})$, we have
$$
\wedge\Gamma \in Tr^r_{x_{m+1}}(a_{m+1}), \vee\Delta \in
Fa^r_{x_{m+1}}(a_{m+1}).
$$
But then we have $a_1 \models_r \mu'(\wedge\Gamma,\vee\Delta)$ and
hence, by $a_1\mathrel{B}b_1$ and \eqref{E:refl-7}, that $b_1
\models_t \mu'(\wedge\Gamma,\vee\Delta)$. But the latter means
that every finite subset of the set
$$
\{ S_1(b_1, x_2), \pi^m_2Sx \} \cup Tr^r_{x_{m+1}}(a_{m+1})\cup
\neg Fa^r_{x_{m+1}}(a_{m+1})
$$
is satisfiable at  $M_t/b_1$. Therefore, by compactness of
first-order logic, this set is consistent with $Th(M_t/b_1)$ and,
by $\omega$-saturation of both $M_1$ and $M_2$, it must be
satisfied in $M_t/b_1$ by some $b_2,\ldots, b_{m+1}  \in U_t$. So
for any such $b_2,\ldots, b_{m+1}$ we will have both $\pi^m_1S^tb$
and, moreover
$$
b_{m+ 1} \models_t Tr^r_{x_{m+1}}(a_{m+1})\cup \neg
Fa^r_{x_{m+1}}(a_{m+1}).
$$

Thus, by independence of truth at a sequence of elements from the
choice of free variables in a formula, we will also have
$$
b_{m+1} \models_t Tr^r_{x}(a_{m+1})\cup \neg Fa^r_{x}(a_{m+1}).
$$
This means that
$$
\{ \langle b_{m+1},a_{m+1}\rangle, \langle a_{m+1},b_{m+1}\rangle
\} \in Rel(\mathcal{L}^\Theta_x(\mathbb{M}),M_1, M_2),
$$
therefore, by definition of $A$ we get that
$a_{m+1}\mathrel{A}b_{m+1}$ and $b_{m+1}\mathrel{A}a_{m+1}$. Hence
we get $a_{m+1}\mathrel{A \cap A^{-1}}b_{m+1}$ and thus condition
\eqref{E:cond-E} for $\langle A \cap A^{-1}, B\rangle$ is
satisfied. Whence we conclude that $\langle A \cap A^{-1},
B\rangle \in [\mu](\{ A \})$.

\emph{Case 1.8}. Let $\mu \in \nu_x(\forall, f)$, where $f$ is a
rest Boolean non-$TFT$ function. Then $\mu$ is a special guarded
connective and we have $\{ A \} \cup [\mu^0](\{ A \}) = \{ A, A
\cap A^{-1} \}$. We will show that for every $B$, satisfying the
lemma hypothesis, we have $\langle A, B\rangle \in [\mu](\{ A
\})$, that is to say, that $\langle A, B\rangle$ satisfies
condition \eqref{E:cond-Aspec}.

Since $\mu$ is $\forall$-guarded, we can assume that $\mu$ has the
form
$$
\forall x_2\ldots x_{m + 1}(\pi^{m}_1Sx \to \xi(P_1(x_{m +
1}),\ldots, P_n(x_{m + 1}))),
$$
in the assumptions of \eqref{E:Aguard}, where $\xi$ defines $f$
for $P_1(x_{m + 1}),\ldots, P_n(x_{m + 1})$. By Lemma
\ref{L:Boolean}.7 we get that for arbitrary $\psi_1,\psi_2 \in
\mathcal{L}^\Theta_x(\mathbb{M})$ there exist

$$
\tau_1,\ldots,\tau_n, \theta_1,\ldots, \theta_n \in \{ \psi_1,
\top, \bot \}
$$

such that the formula
$$
\forall x_2\ldots x_{m + 1}(\pi^{m}_1Sx \to \xi(\tau_1(x_{m +
1}),\ldots, \tau_n(x_{m + 1})))
$$
is logically equivalent to
$$
\forall x_2\ldots x_{m + 1}(\pi^{m}_1Sx \to \psi_1(x_{m + 1})),
$$
whereas the formula
$$
\forall x_2\ldots x_{m + 1}(\pi^{m}_1Sx \to \xi(\theta_1(x_{m +
1}),\ldots, \theta_n(x_{m + 1})))
$$
is logically equivalent to
$$
\forall x_2\ldots x_{m + 1}(\pi^{m}_1Sx \to \neg\psi_1(x_{m +
1})).
$$

Therefore, by our assumptions that $\mathbb{M} \supseteq \{
\wedge, \vee, \top, \bot \}$ and that
$$
B \in Rel(\{ \mu(\psi_1,\ldots, \psi_n)\mid \psi_1,\ldots, \psi_n
\in \mathcal{L}^\Theta_x(\mathbb{M}) \},M_1, M_2)
$$
we infer that:
\begin{equation}\label{E:refl-8}
B \in Rel(\{ \mu'(\psi)\mid \psi \in
\mathcal{L}^\Theta_x(\mathbb{M}) \},M_1, M_2) \cap Rel(\{
\mu''(\psi)\mid \psi \in \mathcal{L}^\Theta_x(\mathbb{M}) \},M_1,
M_2),
\end{equation}

where $\mu'$ and $\mu''$ are defined as follows:

$$
\mu' = \forall x_2\ldots x_{m + 1}(\pi^{m}_1Sx \to P_1(x_{m +
1})),
$$
$$
\mu'' = \forall x_2\ldots x_{m + 1}(\pi^{m}_1Sx \to \neg P_1(x_{m
+ 1})).
$$

We proceed now to verification of condition \eqref{E:cond-Aspec}.
Assume that $\{ r,t \} = \{ 1,2 \}$ and that we have $a_1 \in
U_r$, $\bar{b}_{m+1} \in U_t$, and $a_1\mathrel{B}b_1$. Moreover,
assume that $\pi^m_1S^tb$. Then, first, consider the non-empty set
$Fa^t_{x_{m+1}}(b_{m+1})$. Since $\vee \in \mathbb{M}$, then for
every finite $\Delta \subseteq Fa^t_{x_{m+1}}(b_{m+1})$, we have
$\vee \Delta \in Fa^t_{x_{m+1}}(b_{m+1})$. But then we have $b_1
\not\models_t \mu'(\vee\Delta)$ and hence, by $a_1\mathrel{B}b_1$
and \eqref{E:refl-8}, that $a_1 \not\models_r \mu'(\vee\Delta)$.
But the latter means that every finite subset of the set
$$
\{ S_1(a_1, x_2), \pi^m_2Sx \} \cup \neg Fa^t_{x_{m+1}}(b_{m+1})
$$
is satisfiable at  $M_r/a_1$. Therefore, by compactness of
first-order logic, this set is consistent with $Th(M_r/a_1)$ and,
by $\omega$-saturation of both $M_1$ and $M_2$, it must be
satisfied in $M_r/a_1$ by some $a_2,\ldots, a_{m+1}  \in U_r$. So
for any such $a_2,\ldots, a_{m+1}$ we will have both $\pi^m_1S^ra$
and, moreover
$$
a_{m+ 1} \models_r \neg Fa^t_{x_{m+1}}(b_{m+1}).
$$

Thus, by independence of truth at a sequence of elements from the
choice of free variables in a formula, we will also have
$$
a_{m+1} \models_r \neg Fa^t_{x}(b_{m+1}).
$$
This means that $\{ \langle a_{m+1},b_{m+1}\rangle \} \in
Rel(\mathcal{L}^\Theta_x(\mathbb{M}),M_1, M_2)$, so that, by
definition of $A$, we get that $a_{m+1}\mathrel{A}b_{m+1}$.

Now, second, consider the non-empty set $Tr^t_{x_{m+1}}(b_{m+1})$.
Since $\wedge \in \mathbb{M}$, then for every finite $\Gamma
\subseteq Tr^t_{x_{m+1}}(b_{m+1})$ we have $\wedge \Gamma \in
Tr^t_{x_{m+1}}(b_{m+1})$. But then we have $b_1 \not\models_t
\mu''(\wedge\Gamma)$ and hence, by $a_1\mathrel{B}b_1$ and
\eqref{E:refl-8}, that $a_1 \not\models_r \mu''(\wedge\Gamma)$.
But the latter means that every finite subset of the set
$$
\{ S_1(a_1, x_2), \pi^m_2Sx \} \cup Tr^t_{x_{m+1}}(b_{m+1})
$$
is satisfiable at  $M_r/a_1$. Therefore, by compactness of
first-order logic, this set is consistent with $Th(M_r/a_1)$ and,
by $\omega$-saturation of both $M_1$ and $M_2$, it must be
satisfied in $M_r/a_1$ by some $c_2,\ldots, c_{m+1}  \in U_r$. So
for any such $c_2,\ldots, c_{m+1}$ we will have both $S^r_1(a_1,
c_2) \wedge \pi^m_2S^rc$ and, moreover
$$
c_{m+ 1} \models_r Tr^t_{x_{m+1}}(b_{m+1}).
$$

Thus, by independence of truth at a sequence of elements from the
choice of free variables in a formula, we will also have
$$
c_{m+1} \models_r Tr^t_{x}(b_{m+1}).
$$
This means that
$$
\{ \langle b_{m+1},c_{m+1}\rangle \} \in
Rel(\mathcal{L}^\Theta_x(\mathbb{M}),M_1, M_2),
$$
therefore, by definition of $A$ we get that
$b_{m+1}\mathrel{A}c_{m+1}$ and hence
$c_{m+1}\mathrel{A^{-1}}b_{m+1}$.Thus condition
\eqref{E:cond-Aspec} for $\langle A, B\rangle$ is satisfied and we
conclude that $\langle A, B\rangle \in [\mu](\{ A \})$.

\emph{Case 1.9}. Let $\mu \in \nu_x(\exists, f)$, where $f$ is a
rest Boolean non-$FTF$ function. Then $\mu$ is a special guarded
connective and we have $\{ A \} \cup [\mu^0](\{ A \}) = \{ A, A
\cap A^{-1} \}$. We will show that for every $B$, satisfying the
lemma hypothesis, we have $\langle A, B\rangle \in [\mu](\{ A
\})$, that is to say, that $\langle A, B\rangle$ satisfies
condition \eqref{E:cond-Espec}.

Since $\mu$ is $\forall$-guarded, we can assume that $\mu$ has the
form
$$
\exists x_2\ldots x_{m + 1}(\pi^{m}_1Sx \wedge \xi(P_1(x_{m +
1}),\ldots, P_n(x_{m + 1}))),
$$
in the assumptions of \eqref{E:Eguard}, where $\xi$ defines $f$
for $P_1(x_{m + 1}),\ldots, P_n(x_{m + 1})$. By Lemma
\ref{L:Boolean}.7 we get that for arbitrary $\psi_1,\psi_2 \in
\mathcal{L}^\Theta_x(\mathbb{M})$ there exist

$$
\tau_1,\ldots,\tau_n, \theta_1,\ldots, \theta_n \in \{ \psi_1,
\top, \bot \}
$$

such that the formula
$$
\exists x_2\ldots x_{m + 1}(\pi^{m}_1Sx \wedge \xi(\tau_1(x_{m +
1}),\ldots, \tau_n(x_{m + 1})))
$$
is logically equivalent to
$$
\exists x_2\ldots x_{m + 1}(\pi^{m}_1Sx \wedge \psi_1(x_{m + 1})),
$$
whereas the formula
$$
\exists x_2\ldots x_{m + 1}(\pi^{m}_1Sx \wedge \xi(\theta_1(x_{m +
1}),\ldots, \theta_n(x_{m + 1})))
$$
is logically equivalent to
$$
\exists x_2\ldots x_{m + 1}(\pi^{m}_1Sx \wedge \neg\psi_1(x_{m +
1})).
$$

Therefore, by our assumptions that $\mathbb{M} \supseteq \{
\wedge, \vee, \top, \bot \}$ and that
$$
B \in Rel(\{ \mu(\psi_1,\ldots, \psi_n)\mid \psi_1,\ldots, \psi_n
\in \mathcal{L}^\Theta_x(\mathbb{M}) \},M_1, M_2)
$$
we infer that:
\begin{equation}\label{E:refl-9}
B \in Rel(\{ \mu'(\psi)\mid \psi \in
\mathcal{L}^\Theta_x(\mathbb{M}) \},M_1, M_2) \cap Rel(\{
\mu''(\psi)\mid \psi \in \mathcal{L}^\Theta_x(\mathbb{M}) \},M_1,
M_2),
\end{equation}

where $\mu'$ and $\mu''$ are defined as follows:

$$
\mu' = \exists x_2\ldots x_{m + 1}(\pi^{m}_1Sx \wedge P_1(x_{m +
1})),
$$
$$
\mu'' = \exists x_2\ldots x_{m + 1}(\pi^{m}_1Sx \wedge \neg
P_1(x_{m + 1})).
$$

We proceed now to verification of condition \eqref{E:cond-Espec}.
Assume that $\{ r,t \} = \{ 1,2 \}$ and that we have
$\bar{a}_{m+1} \in U_r$, $b_1 \in U_t$, and $a_1\mathrel{B}b_1$.
Moreover, assume that $\pi^m_1S^ra$. Then, first, consider the
non-empty set $Tr^r_{x_{m+1}}(a_{m+1})$. Since $\wedge \in
\mathbb{M}$, then for every finite $\Gamma \subseteq
Tr^r_{x_{m+1}}(a_{m+1})$, we have $\wedge\Gamma \in
Tr^r_{x_{m+1}}(a_{m+1})$. But then we have $a_1 \models_r
\mu'(\wedge\Gamma)$ and hence, by $a_1\mathrel{B}b_1$ and
\eqref{E:refl-9}, that $b_1 \models_t \mu'(\wedge\Gamma)$. But the
latter means that every finite subset of the set
$$
\{ S_1(b_1, x_2), \pi^m_2Sx \} \cup Tr^r_{x_{m+1}}(a_{m+1})
$$
is satisfiable at  $M_t/b_1$. Therefore, by compactness of
first-order logic, this set is consistent with $Th(M_t/b_1)$ and,
by $\omega$-saturation of both $M_1$ and $M_2$, it must be
satisfied in $M_t/b_1$ by some $b_2,\ldots, b_{m+1}  \in U_t$. So
for any such $b_2,\ldots, b_{m+1}$ we will have both $\pi^m_1S^tb$
and, moreover
$$
b_{m+ 1} \models_t Tr^r_{x_{m+1}}(a_{m+1}).
$$

Thus, by independence of truth at a sequence of elements from the
choice of free variables in a formula, we will also have
$$
b_{m+1} \models_t Tr^r_{x}(a_{m+1}).
$$
This means that $\{ \langle a_{m+1},b_{m+1}\rangle \} \in
Rel(\mathcal{L}^\Theta_x(\mathbb{M}),M_1, M_2)$, therefore, by
definition of $A$ we get that $a_{m+1}\mathrel{A}b_{m+1}$.

Now, second, consider the non-empty set $Fa^r_{x_{m+1}}(a_{m+1})$.
Since $\vee \in \mathbb{M}$, then for every finite $\Delta
\subseteq Fa^r_{x_{m+1}}(a_{m+1})$ we have $\vee\Delta \in
Fa^r_{x_{m+1}}(a_{m+1})$. But then we have $a_1 \models_r
\mu''(\vee\Delta)$ and hence, by $a_1\mathrel{B}b_1$ and
\eqref{E:refl-9}, that $b_1 \models_t \mu''(\vee\Delta)$. But the
latter means that every finite subset of the set
$$
\{ S_1(b_1, x_2), \pi^m_2Sx \} \cup \neg Fa^r_{x_{m+1}}(a_{m+1})
$$
is satisfiable at  $M_t/b_1$. Therefore, by compactness of
first-order logic, this set is consistent with $Th(M_t/b_1)$ and,
by $\omega$-saturation of both $M_1$ and $M_2$, it must be
satisfied in $M_t/b_1$ by some $c_2,\ldots, c_{m+1}  \in U_t$. So
for any such $c_2,\ldots, c_{m+1}$ we will have both $S^t_1(b_1,
c_2) \wedge \pi^m_2S^tc$ and, moreover
$$
c_{m+ 1} \models_t \neg Fa^r_{x_{m+1}}(a_{m+1}).
$$

Thus, by independence of truth at a sequence of elements from the
choice of free variables in a formula, we will also have
$$
c_{m+1} \models_t \neg Fa^r_{x}(a_{m+1}).
$$
This means that $\{ \langle c_{m+1},a_{m+1}\rangle \} \in
Rel(\mathcal{L}^\Theta_x(\mathbb{M}),M_1, M_2)$, therefore, by
definition of $A$ we get that $c_{m+1}\mathrel{A}a_{m+1}$ and
hence $a_{m+1}\mathrel{A^{-1}}c_{m+1}$. Thus condition
\eqref{E:cond-Espec} for $\langle A, B\rangle$ is satisfied.
Whence we conclude that $\langle A, B\rangle \in [\mu](\{ A \})$.

\emph{Case 2}. Now, assume that $\delta(\mu) = 2$ and $\mu$ is a
regular guarded connective. Then we can assume that Lemma is
already proved for $\mu^-$. We have to distinguish between the
following cases:

\emph{Case 2.1}. $\mu \in \nu_x(\forall\exists, f)$, where $f$ is
a non-constant Boolean monotone function.

Then we can assume that $\mu$ has a form $\forall x_2\ldots x_{m +
1}(\pi^{m}_1Sx \to \mu^-)$ in the assumptions of \eqref{E:Aguard}
and that $\mu^- \in \nu_x(\exists, f)$.

Consider $A_2$ as defined in lemma. We have of course

\begin{equation}\label{E:A21}
A_2 \in Rel(\{ \mu^-(\psi_1,\ldots, \psi_n)\mid \psi_1,\ldots,
\psi_n \in \mathcal{L}^\Theta_x(\mathbb{M}) \},M_1, M_2).
\end{equation}

Therefore, by induction hypothesis, for some $C \in \{ A \} \cup
[\mu^0](\{ A \})$ the couple $\langle C,A_2\rangle$ is in
$[\mu^-](\{ A \})$.

Now assume that
$$
B \in Rel(\{ \mu(\psi_1,\ldots, \psi_n)\mid \psi_1,\ldots, \psi_n
\in \mathcal{L}^\Theta_x(\mathbb{M}) \},M_1, M_2).
$$
To show that $\langle C,A_2, B\rangle$ is in $[\mu](\{ A \})$, we
only need to verify condition \eqref{E:cond-A}.

So assume that $\{ r,t \} = \{ 1,2 \}$ and that we have $a_1 \in
U_r$, $\bar{b}_{m+1} \in U_t$, and $a_1\mathrel{B}b_1$. Moreover,
assume that $\pi^m_1S^tb$. Then consider the set
$$
\mathbb{F} = Fa(\{ \mu^-(\psi,\ldots,\psi)\mid \psi \in
\mathcal{L}^\Theta_{x_{m+1}}(\mathbb{M}) \}, M_t, b_{m+1}).
$$
This set is non-empty, since we have $\bot \in
\mathcal{L}^\Theta_{x_{m+1}}(\mathbb{M})$, and, further:
$$
\bot \Leftrightarrow \mu^-(\bot,\ldots,\bot) \in \mathbb{F}.
$$
Now, take an arbitrary finite subset

\begin{equation}\label{E:set1}
\{ \mu^-(\psi_1,\ldots, \psi_1),\ldots, \mu^-(\psi_u,\ldots,
\psi_u) \} \subseteq \mathbb{F}.
\end{equation}
Note that since we have $\psi_1,\ldots, \psi_u \in
\mathcal{L}^\Theta_x(\mathbb{M})$ and $\vee \in \mathbb{M}$, we
also get that $\bigvee^u_{i = 1}\psi_i \in
\mathcal{L}^\Theta_x(\mathbb{M})$.

 We have then
$$
b_{m+1} \not\models_t \bigvee^u_{i = 1}\mu^-(\psi_i,\ldots,
\psi_i),
$$
whence by Lemma \ref{L:modalities}.4 we get that
$$
b_{m+1} \not\models_t \mu^-(\bigvee^u_{i = 1}\psi_i,\ldots,
\bigvee^u_{i = 1}\psi_i),
$$
and further, that
$$
b_1 \not\models_t \mu(\bigvee^u_{i = 1}\psi_i,\ldots, \bigvee^u_{i
= 1}\psi_i).
$$
Therefore, by $a_1\mathrel{B}b_1$ and the fact that $\bigvee^u_{i
= 1}\psi_i \in \mathcal{L}^\Theta_x(\mathbb{M})$, we infer that
$$a_1 \not\models_r \mu(\bigvee^u_{i = 1}\psi_i,\ldots,
\bigvee^u_{i = 1}\psi_i),
$$
thus obtaining that there must be $a_2,\ldots, a_{m + 1} \in U_r$,
such that we have $\pi^m_1S^ra$ and, moreover:
$$a_{m + 1} \not\models_r \mu^-(\bigvee^u_{i = 1}\psi_i,\ldots,
\bigvee^u_{i = 1}\psi_i).
$$
Whence, again by Lemma \ref{L:modalities}.4, we get that
$$
a_{m+1} \not\models_r \bigvee^u_{i = 1}\mu^-(\psi_i,\ldots,
\psi_i),
$$
This, in turn, means that the set of formulas
$$
\{ S_1(a_1, x_2), \pi^m_2Sx \} \cup \{ \neg\mu^-(\psi_1,\ldots,
\psi_1),\ldots, \neg\mu^-(\psi_u,\ldots, \psi_u) \}
$$
is satisfiable at $M_r/a_1$. But since the set in \eqref{E:set1}
was chosen as an arbitrary subset of $\mathbb{F}$, we have that
every finite subset of the set
$$
\{ S_1(a_1, x_2), \pi^m_2Sx \} \cup \{ \neg\psi(x_{m+1}) \mid \psi
\in \mathbb{F} \}
$$
is satisfiable at  $M_r/a_1$. Therefore, by compactness of
first-order logic, this set is consistent with $Th(M_r/a_1)$ and,
by $\omega$-saturation of both $M_1$ and $M_2$, it must be
satisfied in $M_r/a_1$ by some $a_2,\ldots, a_{m+1}  \in U_r$. So
for any such $a_2,\ldots, a_{m+1}$ we will have both $\pi^m_1S^ra$
and, moreover
$$
a_{m+ 1} \models_r \{ \neg\psi(x_{m+1}) \mid \psi \in \mathbb{F}
\}.
$$

Thus, by independence of truth at a sequence of elements from the
choice of free variables in a formula, we will also have
$$
a_{m+1} \models_r \{ \neg\psi(x) \mid \psi \in \mathbb{F} \}.
$$
This means that
$$
\{ \langle a_{m+1},b_{m+1}\rangle \} \in Rel(\{
\mu^-(\psi,\ldots,\psi)\mid \psi \in
\mathcal{L}^\Theta_x(\mathbb{M}) \},M_1, M_2),
$$
and therefore, by Lemma \ref{L:modalities}.1 we get that
$$
\{ \langle a_{m+1},b_{m+1}\rangle \} \in Rel(\{
\mu^-(\psi_1,\ldots,\psi_n)\mid \psi_1,\ldots,\psi_n \in
\mathcal{L}^\Theta_x(\mathbb{M}) \},M_1, M_2).
$$
Whence by definition of $A_2$ we get that
$a_{m+1}\mathrel{A_2}b_{m+1}$, and thus that condition
\eqref{E:cond-A} for $\langle C,A_2, B\rangle$ is satisfied. So we
conclude that $\langle C,A_2, B\rangle \in [\mu](\{ A \})$.

\emph{Case 2.2}. $\mu \in \nu_x(\forall\exists, f)$, where $f$ is
a non-constant Boolean anti-monotone function. This case is
similar to the previous, the difference being that instead of
Lemma \ref{L:modalities}.4 one has to apply \ref{L:modalities}.5.

\emph{Case 2.3}. $\mu \in \nu_x(\exists\forall, f)$, where $f$ is
a non-constant Boolean monotone function.

Then we can assume that $\mu$ has a form $\exists x_2\ldots x_{m +
1}(\pi^{m}_1Sx \wedge \mu^-)$ in the assumptions of
\eqref{E:Eguard} and that $\mu^- \in \nu_x(\forall, f)$.

Consider $A_2$ as defined in lemma. We have of course

\begin{equation}\label{E:A23}
A_2 \in Rel(\{ \mu^-(\psi_1,\ldots, \psi_n)\mid \psi_1,\ldots,
\psi_n \in \mathcal{L}^\Theta_x(\mathbb{M}) \},M_1, M_2).
\end{equation}

Therefore, by induction hypothesis, for some $C \in \{ A \} \cup
[\mu^0](\{ A \})$ the couple $\langle C,A_2\rangle$ is in
$[\mu^-](\{ A \})$.

Now assume that
$$
B \in Rel(\{ \mu(\psi_1,\ldots, \psi_n)\mid \psi_1,\ldots, \psi_n
\in \mathcal{L}^\Theta_x(\mathbb{M}) \},M_1, M_2).
$$
To show that $\langle C,A_2, B\rangle$ is in $[\mu](\{ A \})$, we
only need to verify condition \eqref{E:cond-E}.

So assume that $\{ r,t \} = \{ 1,2 \}$ and that we have
$\bar{a}_{m+1} \in U_r$, $b_1 \in U_t$, and $a_1\mathrel{B}b_1$.
Moreover, assume that $\pi^m_1S^ra$. Then consider the set
$$
\mathbb{T} = Tr(\{ \mu^-(\psi,\ldots,\psi)\mid \psi \in
\mathcal{L}^\Theta_{x_{m+1}}(\mathbb{M}) \}, M_r, a_{m+1}).
$$
This set is non-empty, since we have $\top \in
\mathcal{L}^\Theta_{x_{m+1}}(\mathbb{M})$, and, further:
$$
\top \Leftrightarrow \mu^-(\top,\ldots,\top) \in \mathbb{T}.
$$
Now, take an arbitrary finite subset

\begin{equation}\label{E:set3}
\{ \mu^-(\psi_1,\ldots, \psi_1),\ldots, \mu^-(\psi_u,\ldots,
\psi_u) \} \subseteq \mathbb{T}.
\end{equation}
Note that since we have $\psi_1,\ldots, \psi_u \in
\mathcal{L}^\Theta_x(\mathbb{M})$ and $\wedge \in \mathbb{M}$, we
also get that $\bigwedge^u_{i = 1}\psi_i \in
\mathcal{L}^\Theta_x(\mathbb{M})$.

 We have then
$$
a_{m+1} \models_r \bigwedge^u_{i = 1}\mu^-(\psi_i,\ldots, \psi_i),
$$
whence by Lemma \ref{L:modalities}.2 we get that
$$
a_{m+1} \models_r \mu^-(\bigwedge^u_{i = 1}\psi_i,\ldots,
\bigwedge^u_{i = 1}\psi_i),
$$
and further, that
$$
a_1 \models_r \mu(\bigwedge^u_{i = 1}\psi_i,\ldots, \bigwedge^u_{i
= 1}\psi_i).
$$
Therefore, by $a_1\mathrel{B}b_1$ and the fact that
$\bigwedge^u_{i = 1}\psi_i \in \mathcal{L}^\Theta_x(\mathbb{M})$,
we infer that
$$b_1 \models_t \mu(\bigwedge^u_{i = 1}\psi_i,\ldots,
\bigwedge^u_{i = 1}\psi_i),
$$
thus obtaining that there must be $b_2,\ldots, b_{m + 1} \in U_t$,
such that we have $\pi^m_1S^tb$ and, moreover:
$$b_{m + 1} \models_t \mu^-(\bigwedge^u_{i = 1}\psi_i,\ldots,
\bigwedge^u_{i = 1}\psi_i).
$$
Whence, again by Lemma \ref{L:modalities}.2, we get that
$$
b_{m+1} \models_t \bigwedge^u_{i = 1}\mu^-(\psi_i,\ldots, \psi_i),
$$
This, in turn, means that the set of formulas
$$
\{ S_1(a_1, x_2), \pi^m_2Sx \} \cup \{ \mu^-(\psi_1,\ldots,
\psi_1),\ldots, \mu^-(\psi_u,\ldots, \psi_u) \}
$$
is satisfiable at $M_t/b_1$. But since the set in \eqref{E:set3}
was chosen as an arbitrary subset of $\mathbb{T}$, we have that
every finite subset of the set
$$
\{ S_1(a_1, x_2), \pi^m_2Sx \} \cup \mathbb{T}
$$
is satisfiable at  $M_t/b_1$. Therefore, by compactness of
first-order logic, this set is consistent with $Th(M_t/b_1)$ and,
by $\omega$-saturation of both $M_1$ and $M_2$, it must be
satisfied in $M_t/b_1$ by some $b_2,\ldots, b_{m+1}  \in U_t$. So
for any such $b_2,\ldots, b_{m+1}$ we will have both $\pi^m_1S^tb$
and, moreover
$$
b_{m+ 1} \models_t \mathbb{T}.
$$

Thus, by independence of truth at a sequence of elements from the
choice of free variables in a formula, we will also have
$$
b_{m+1} \models_t \{ \psi(x) \mid \psi \in \mathbb{T} \}.
$$
This means that
$$
\{ \langle a_{m+1},b_{m+1}\rangle \} \in Rel(\{
\mu^-(\psi,\ldots,\psi)\mid \psi \in
\mathcal{L}^\Theta_x(\mathbb{M}) \},M_1, M_2),
$$
and therefore, by Lemma \ref{L:modalities}.1 we get that
$$
\{ \langle a_{m+1},b_{m+1}\rangle \} \in Rel(\{
\mu^-(\psi_1,\ldots,\psi_n)\mid \psi_1,\ldots,\psi_n \in
\mathcal{L}^\Theta_x(\mathbb{M}) \},M_1, M_2).
$$
Whence by the definition of $A_2$ we get that
$a_{m+1}\mathrel{A_2}b_{m+1}$, and thus that condition
\eqref{E:cond-E} for $\langle C,A_2, B\rangle$ is satisfied. So we
conclude that $\langle C,A_2, B\rangle \in [\mu](\{ A \})$.

\emph{Case 2.4}. $\mu \in \nu_x(\exists\forall, f)$, where $f$ is
a non-constant Boolean anti-monotone function. This case is
similar to the previous, the difference being that instead of
Lemma \ref{L:modalities}.2 one has to apply \ref{L:modalities}.3.

\emph{Case 2.5}. $\mu \in \nu_x(\forall\exists, f)$, where $f$ is
a rest non-$FTF$-function. Then, as we have shown in Section
\ref{S:bool}, $f$ is a $TFT$-function. Further, we can assume that
$\mu$ has a form
$$
\forall x_2\ldots x_{m_1 + 1}(\pi^{m_1}_1Sx \to \exists x_{m_1 +
2}\ldots x_{m_1 + m_2 + 1}(\pi^{m_1 + m_2}_{m_1 + 1}Sx \wedge
\phi(P_1(x_{m_1 + m_2 + 1}),\ldots,P_n(x_{m_1 + m_2 + 1}))))
$$
 in the
assumptions of \eqref{E:Aguard} and \eqref{E:Eguard}, where $\phi$
defines $f$ for $P_1(x_{m_1 + m_2 + 1}),\ldots,P_n(x_{m_1 + m_2 +
1})$, and that $\mu^-$ has the form:
$$
\exists x_{m_1 + 2}\ldots x_{m_1 + m_2 + 1}(\pi^{m_1 + m_2}_{m_1 +
1}Sx \wedge \phi(P_1(x_{m_1 + m_2 + 1}),\ldots,P_n(x_{m_1 + m_2 +
1})))
$$
under the same assumptions.

Consider $A_2$ as defined in lemma. We have of course

\begin{equation}\label{E:A25}
A_2 \in Rel(\{ \mu^-(\psi_1,\ldots, \psi_n)\mid \psi_1,\ldots,
\psi_n \in \mathcal{L}^\Theta_x(\mathbb{M}) \},M_1, M_2).
\end{equation}

Therefore, by induction hypothesis, for some $C \in \{ A \} \cup
[\mu^0](\{ A \})$ the couple $\langle C,A_2\rangle$ is in
$[\mu^-](\{ A \})$.

Now assume that
$$
B \in Rel(\{ \mu(\psi_1,\ldots, \psi_n)\mid \psi_1,\ldots, \psi_n
\in \mathcal{L}^\Theta_x(\mathbb{M}) \},M_1, M_2).
$$
To show that $\langle C,A_2, B\rangle$ is in $[\mu](\{ A \})$, we
only need to verify condition \eqref{E:cond-A}.

So assume that $\{ r,t \} = \{ 1,2 \}$ and that we have $a_1 \in
U_r$, $\bar{b}_{m+1} \in U_t$, and $a_1\mathrel{B}b_1$. Moreover,
assume that $\pi^m_1S^tb$. Then consider the set
$$
\mathbb{F} = Fa(\{ \mu^-(\psi_1,\ldots,\psi_n)\mid
\psi_1,\ldots,\psi_n \in \mathcal{L}^\Theta_{x_{m_1 + m_2 +
1}}(\mathbb{M}) \}, M_t, b_{m+1}).
$$
This set is non-empty, since we have $\top, \bot \in
\mathcal{L}^\Theta_{x_{m_1 + m_2 + 1}}(\mathbb{M})$, and $\phi$
above defines a rest Boolean function, hence a function which is
false for at least one $n$-tuple of Boolean values. Therefore, we
get:
$$
\bot \Leftrightarrow \mu^-(Bool_1,\ldots,Bool_n) \in \mathbb{F},
$$
for appropriate $Bool_1,\ldots,Bool_n \in \{ \top, \bot \}$. Now,
take an arbitrary finite subset

\begin{equation}\label{E:set5}
\{ \mu^-(\psi^1_1,\ldots, \psi^1_n),\ldots, \mu^-(\psi^u_1,\ldots,
\psi^u_n) \} \subseteq \mathbb{F}.
\end{equation}

 We have then

\begin{equation}\label{E:star1}
b_{m+1} \not\models_t \bigvee^u_{i = 1}\mu^-(\psi^i_1,\ldots,
\psi^i_n)
\end{equation}.

Consider the latter disjunction. We can build for it the following
chain of logical equivalents. First, using the definition of
$\mu^-$ and the distributivity of $\exists$ over $\vee$ we
transform it into
$$
\exists x_{m_1 + 2}\ldots x_{m_1 + m_2 + 1}(\pi^{m_1 + m_2}_{m_1 +
1}Sx \wedge \bigvee^u_{i = 1} \phi(\psi^i_1(x_{m_1 + m_2 +
1}),\ldots,\psi^i_n(x_{m_1 + m_2 + 1}))).
$$
Further, using Lemma \ref{L:Boolean}.8, we get that the latter
formula is equivalent to
$$
\exists x_{m_1 + 2}\ldots x_{m_1 + m_2 + 1}(\pi^{m_1 + m_2}_{m_1 +
1}Sx \wedge \bigvee^u_{i = 1}(K^i_1 \vee,\ldots, \vee K^i_{i_k}
\vee L^i_1 \vee,\ldots, \vee L^i_{i_m})),
$$
where $K$'s are conjunctions of $\psi$'s and $L$'s are
conjunctions of negated $\psi$'s. Then, using the laws of
classical propositional logic we get the following logical
equivalents for our formula:
$$
\exists x_{m_1 + 2}\ldots x_{m_1 + m_2 + 1}(\pi^{m_1 + m_2}_{m_1 +
1}Sx \wedge \bigvee^u_{i = 1}(K^i_1 \vee,\ldots, \vee K^i_{i_k})
\vee \bigvee^u_{i = 1}(L^i_1 \vee,\ldots, \vee L^i_{i_m})),
$$
and further, by De Morgan laws we push the negations out of $L$'s,
getting:
$$
\exists x_{m_1 + 2}\ldots x_{m_1 + m_2 + 1}(\pi^{m_1 + m_2}_{m_1 +
1}Sx \wedge \bigvee^u_{i = 1}(K^i_1 \vee,\ldots, \vee K^i_{i_k})
\vee \bigvee^u_{i = 1}(\neg\tilde{L}^i_1 \vee,\ldots, \vee
\neg\tilde{L}^i_{i_m})),
$$
where the $\tilde{L}$'s are the respective disjunctions of
$\psi$'s. Further applications of De Morgan laws yield:
$$
\exists x_{m_1 + 2}\ldots x_{m_1 + m_2 + 1}(\pi^{m_1 + m_2}_{m_1 +
1}Sx \wedge \bigvee^u_{i = 1}(K^i_1 \vee,\ldots, \vee K^i_{i_k})
\vee \bigvee^u_{i = 1}\neg(\tilde{L}^i_1 \wedge,\ldots, \wedge
\tilde{L}^i_{i_m})),
$$
and
$$
\exists x_{m_1 + 2}\ldots x_{m_1 + m_2 + 1}(\pi^{m_1 + m_2}_{m_1 +
1}Sx \wedge \bigvee^u_{i = 1}(K^i_1 \vee,\ldots, \vee K^i_{i_k})
\vee \neg\bigwedge^u_{i = 1}(\tilde{L}^i_1 \wedge,\ldots, \wedge
\tilde{L}^i_{i_m})).
$$
 We now set
$$
\Phi := \bigvee^u_{i = 1}(K^i_1 \vee,\ldots, \vee K^i_{i_k}),
$$
and
$$
\Psi := \bigwedge^u_{i = 1}(\tilde{L}^i_1 \wedge,\ldots, \wedge
\tilde{L}^i_{i_m}),
$$
thus getting the next logical equivalent to our formula in the
following form:
$$
\exists x_{m_1 + 2}\ldots x_{m_1 + m_2 + 1}(\pi^{m_1 + m_2}_{m_1 +
1}Sx \wedge (\Phi \vee \neg\Psi)).
$$

 Note that since all the $\psi$'s, by their choice,
are in $\mathcal{L}^\Theta_{x_{m_1 + m_2 + 1}}(\mathbb{M})$ and we
have $\wedge, \vee \in \mathbb{M}$, we also get that $\Phi, \Psi
\in \mathcal{L}^\Theta_{x_{m_1 + m_2 + 1}}(\mathbb{M})$. Moreover,
since $\phi$ defines a $TFT$ function, then, by Lemma
\ref{L:Boolean}.5, there exist $\tau_1, \ldots, \tau_n \in \{
\Phi, \Psi, \Phi \vee \Psi, \top, \bot \}$, for which we have the
last formula in the above series of logical equivalents equivalent
to

$$
\exists x_{m_1 + 2}\ldots x_{m_1 + m_2 + 1}(\pi^{m_1 + m_2}_{m_1 +
1}Sx \wedge \phi(\tau_1,\ldots,\tau_n)),
$$

and further, to

$$
\mu^-(\tau_1,\ldots,\tau_n).
$$
By transitivity of logical equivalence, we get that
\eqref{E:star1} is equivalent to
$$
b_{m+1} \not\models_t \mu^-(\tau_1,\ldots,\tau_n),
$$

whence, by definition of $\mu$ and the choice of $b_{m+1}$ we
infer that
$$
b_1 \not\models_t \mu(\tau_1,\ldots,\tau_n).
$$
Therefore, by $a_1\mathrel{B}b_1$ and the fact that
$\tau_1,\ldots,\tau_n \in \mathcal{L}^\Theta_{x_{m_1 + m_2 +
1}}(\mathbb{M})$, we get that
$$
a_1 \not\models_r \mu(\tau_1,\ldots,\tau_n),
$$
thus obtaining that there must be $a_2,\ldots, a_{m + 1} \in U_r$,
such that we have $\pi^m_1S^ra$ and, moreover:
$$a_{m + 1} \not\models_r \mu^-(\tau_1,\ldots,\tau_n).
$$
Whence, using the above chain of logical equivalents in the
reverse direction, we get that
$$
a_{m+1} \not\models_r\bigvee^u_{i = 1}\mu^-(\psi^i_1,\ldots,
\psi^i_n).
$$
This, in turn, means that the set of formulas
$$
\{ S_1(a_1, x_2), \pi^m_2Sx \} \cup \{ \neg \mu^-(\psi^1_1,\ldots,
\psi^1_n),\ldots, \neg\mu^-(\psi^u_1,\ldots, \psi^u_n) \}
$$
is satisfiable at $M_r/a_1$. But since the set in \eqref{E:set5}
was chosen as an arbitrary subset of $\mathbb{F}$, we have that
every finite subset of the set
$$
\{ S_1(a_1, x_2), \pi^m_2Sx \} \cup \{ \neg \mu^-(\psi_1,\ldots,
\psi_n) \mid \mu^-(\psi_1,\ldots, \psi_n) \in \mathbb{F} \}
$$
is satisfiable at  $M_r/a_1$. Therefore, by compactness of
first-order logic, this set is consistent with $Th(M_r/a_1)$ and,
by $\omega$-saturation of both $M_1$ and $M_2$, it must be
satisfied in $M_r/a_1$ by some $a_2,\ldots, a_{m+1}  \in U_r$. So
for any such $a_2,\ldots, a_{m+1}$ we will have both $\pi^m_1S^ra$
and, moreover
$$
a_{m+ 1} \models_r \{ \neg \mu^-(\psi_1,\ldots, \psi_n) \mid
\mu^-(\psi_1,\ldots, \psi_n) \in \mathbb{F} \}.
$$

Thus, by independence of truth at a sequence of elements from the
choice of free variables in a formula, we will also have
$$
a_{m+1} \models_r \{ \neg \mu^-(\psi_1,\ldots, \psi_n)(x) \mid
\mu^-(\psi_1,\ldots, \psi_n) \in \mathbb{F} \}.
$$
This means that
$$
\{ \langle a_{m+1},b_{m+1}\rangle \} \in Rel(\{
\mu^-(\psi_1,\ldots,\psi_n)\mid \psi \in
\mathcal{L}^\Theta_x(\mathbb{M}) \},M_1, M_2).
$$
Whence by the definition of $A_2$ we get that
$a_{m+1}\mathrel{A_2}b_{m+1}$, and thus that condition
\eqref{E:cond-A} for $\langle C,A_2, B\rangle$ is satisfied. So we
conclude that $\langle C,A_2, B\rangle \in [\mu](\{ A \})$.

\emph{Case 2.6}. $\mu \in \nu_x(\exists\forall, f)$, where $f$ is
a rest non-$TFT$-function. Then, as we have shown in Section
\ref{S:bool}, $f$ is an $FTF$-function. Further, we can assume
that $\mu$ has a form
$$
\exists x_2\ldots x_{m_1 + 1}(\pi^{m_1}_1Sx \wedge \forall x_{m_1
+ 2}\ldots x_{m_1 + m_2 + 1}(\pi^{m_1 + m_2}_{m_1 + 1}Sx \to
\phi(P_1(x_{m_1 + m_2 + 1}),\ldots,P_n(x_{m_1 + m_2 + 1}))))
$$
 in the
assumptions of \eqref{E:Aguard} and \eqref{E:Eguard}, where $\phi$
defines $f$ for $P_1(x_{m_1 + m_2 + 1}),\ldots,P_n(x_{m_1 + m_2 +
1})$, and that $\mu^-$ has the form:
$$
\forall x_{m_1 + 2}\ldots x_{m_1 + m_2 + 1}(\pi^{m_1 + m_2}_{m_1 +
1}Sx \to \phi(P_1(x_{m_1 + m_2 + 1}),\ldots,P_n(x_{m_1 + m_2 +
1})))
$$
under the same assumptions.

Consider $A_2$ as defined in lemma. We have of course

\begin{equation}\label{E:A26}
A_2 \in Rel(\{ \mu^-(\psi_1,\ldots, \psi_n)\mid \psi_1,\ldots,
\psi_n \in \mathcal{L}^\Theta_x(\mathbb{M}) \},M_1, M_2).
\end{equation}

Therefore, by induction hypothesis, for some $C \in \{ A \} \cup
[\mu^0](\{ A \})$ the couple $\langle C,A_2\rangle$ is in
$[\mu^-](\{ A \})$.

Now assume that
$$
B \in Rel(\{ \mu(\psi_1,\ldots, \psi_n)\mid \psi_1,\ldots, \psi_n
\in \mathcal{L}^\Theta_x(\mathbb{M}) \},M_1, M_2).
$$
To show that $\langle C,A_2, B\rangle$ is in $[\mu](\{ A \})$, we
only need to verify condition \eqref{E:cond-E}.

So assume that $\{ r,t \} = \{ 1,2 \}$ and that we have
$\bar{a}_{m+1} \in U_r$, $b_1 \in U_t$, and $a_1\mathrel{B}b_1$.
Moreover, assume that $\pi^m_1S^ra$. Then consider the set
$$
\mathbb{T} = Tr(\{ \mu^-(\psi_1,\ldots,\psi_n)\mid
\psi_1,\ldots,\psi_n \in \mathcal{L}^\Theta_{x_{m_1 + m_2 +
1}}(\mathbb{M}) \}, M_r, a_{m+1}).
$$
This set is non-empty, since we have $\top, \bot \in
\mathcal{L}^\Theta_{x_{m_1 + m_2 + 1}}(\mathbb{M})$, and $\phi$
above defines a rest Boolean function, hence a function which is
true for at least one $n$-tuple of Boolean values. Therefore, we
get:
$$
\top \Leftrightarrow \mu^-(Bool_1,\ldots,Bool_n) \in \mathbb{F},
$$
for appropriate $Bool_1,\ldots,Bool_n \in \{ \top, \bot \}$. Now,
take an arbitrary finite subset

\begin{equation}\label{E:set6}
\{ \mu^-(\psi^1_1,\ldots, \psi^1_n),\ldots, \mu^-(\psi^u_1,\ldots,
\psi^u_n) \} \subseteq \mathbb{T}.
\end{equation}

 We have then

\begin{equation}\label{E:star2}
a_{m+1} \models_r \bigwedge^u_{i = 1}\mu^-(\psi^i_1,\ldots,
\psi^i_n)
\end{equation}.

Consider the latter conjunction. We can build for it the following
chain of logical equivalents. First, using the definition of
$\mu^-$ and the distributivity of $\forall$ over $\wedge$ we
transform it into
$$
\forall x_{m_1 + 2}\ldots x_{m_1 + m_2 + 1}(\pi^{m_1 + m_2}_{m_1 +
1}Sx \to \bigwedge^u_{i = 1} \phi(\psi^i_1(x_{m_1 + m_2 +
1}),\ldots,\psi^i_n(x_{m_1 + m_2 + 1}))).
$$
Further, using Lemma \ref{L:Boolean}.9, we get that the latter
formula is equivalent to
$$
\forall x_{m_1 + 2}\ldots x_{m_1 + m_2 + 1}(\pi^{m_1 + m_2}_{m_1 +
1}Sx \to \bigwedge^u_{i = 1}(K^i_1 \wedge,\ldots, \wedge K^i_{i_k}
\wedge L^i_1 \wedge,\ldots, \wedge L^i_{i_m})),
$$
where $K$'s are disjunctions of $\psi$'s and $L$'s are
disjunctions of negated $\psi$'s. Then, using the laws of
classical propositional logic we get the following logical
equivalents for our formula:
$$
\forall x_{m_1 + 2}\ldots x_{m_1 + m_2 + 1}(\pi^{m_1 + m_2}_{m_1 +
1}Sx \to \bigwedge^u_{i = 1}(K^i_1 \wedge,\ldots, \wedge
K^i_{i_k}) \wedge \bigwedge^u_{i = 1}(L^i_1 \wedge,\ldots, \wedge
L^i_{i_m})),
$$
and further, by De Morgan laws we push the negations out of $L$'s,
getting:
$$
\forall x_{m_1 + 2}\ldots x_{m_1 + m_2 + 1}(\pi^{m_1 + m_2}_{m_1 +
1}Sx \to \bigwedge^u_{i = 1}(K^i_1 \wedge,\ldots, \wedge
K^i_{i_k}) \wedge \bigwedge^u_{i = 1}(\neg\tilde{L}^i_1
\wedge,\ldots, \wedge \neg\tilde{L}^i_{i_m})),
$$
where the $\tilde{L}$'s are the respective conjunctions of
$\psi$'s. Further applications of De Morgan laws yield:
$$
\forall x_{m_1 + 2}\ldots x_{m_1 + m_2 + 1}(\pi^{m_1 + m_2}_{m_1 +
1}Sx \to \bigwedge^u_{i = 1}(K^i_1 \wedge,\ldots, \wedge
K^i_{i_k}) \wedge \bigwedge^u_{i = 1}\neg(\tilde{L}^i_1
\vee,\ldots, \vee \tilde{L}^i_{i_m})),
$$
and
$$
\forall x_{m_1 + 2}\ldots x_{m_1 + m_2 + 1}(\pi^{m_1 + m_2}_{m_1 +
1}Sx \to \bigwedge^u_{i = 1}(K^i_1 \wedge,\ldots, \wedge
K^i_{i_k}) \wedge \neg\bigvee^u_{i = 1}(\tilde{L}^i_1 \vee,\ldots,
\vee \tilde{L}^i_{i_m})).
$$
We now set
$$
\Phi := \bigwedge^u_{i = 1}(K^i_1 \wedge,\ldots, \wedge
K^i_{i_k}),
$$
and
$$
\Psi := \bigvee^u_{i = 1}(\tilde{L}^i_1 \vee,\ldots, \vee
\tilde{L}^i_{i_m}),
$$
thus getting the next logical equivalent to our formula in the
following form:
$$
\forall x_{m_1 + 2}\ldots x_{m_1 + m_2 + 1}(\pi^{m_1 + m_2}_{m_1 +
1}Sx \to (\Phi \wedge \neg\Psi)).
$$

 Note that since all the $\psi$'s, by their choice,
are in $\mathcal{L}^\Theta_{x_{m_1 + m_2 + 1}}(\mathbb{M})$ and we
have $\wedge, \vee \in \mathbb{M}$, we also get that $\Phi, \Psi
\in \mathcal{L}^\Theta_{x_{m_1 + m_2 + 1}}(\mathbb{M})$. Moreover,
since $\phi$ defines an $FTF$ function, then, by Lemma
\ref{L:Boolean}.6, there exist $\tau_1, \ldots, \tau_n \in \{
\Phi, \Psi, \Phi \wedge \Psi, \top, \bot \}$, for which we have
the last formula in the above series of logical equivalents
equivalent to

$$
\forall x_{m_1 + 2}\ldots x_{m_1 + m_2 + 1}(\pi^{m_1 + m_2}_{m_1 +
1}Sx \to \phi(\tau_1,\ldots,\tau_n)),
$$

and further, to

$$
\mu^-(\tau_1,\ldots,\tau_n).
$$
Therefore, we get that \eqref{E:star2} is equivalent to
$$
a_{m+1} \models_r \mu^-(\tau_1,\ldots,\tau_n),
$$

whence, by definition of $\mu$ and the choice of $a_{m+1}$ we
infer that
$$
a_1 \models_r \mu(\tau_1,\ldots,\tau_n).
$$
Therefore, by $a_1\mathrel{B}b_1$ and the fact that
$\tau_1,\ldots,\tau_n \in \mathcal{L}^\Theta_x(\mathbb{M})$, we
get that
$$
b_1 \models_t \mu(\tau_1,\ldots,\tau_n),
$$
thus obtaining that there must be $b_2,\ldots, b_{m + 1} \in U_t$,
such that we have $\pi^m_1S^tb$ and, moreover:
$$b_{m + 1} \models_t \mu^-(\tau_1,\ldots,\tau_n).
$$
Whence, using the above chain of logical equivalents in the
reverse direction, we get that
$$
b_{m+1} \models_t\bigwedge^u_{i = 1}\mu^-(\psi^i_1,\ldots,
\psi^i_n).
$$
This, in turn, means that the set of formulas
$$
\{ S_1(a_1, x_2), \pi^m_2Sx \} \cup \{ \mu^-(\psi^1_1,\ldots,
\psi^1_n),\ldots, \mu^-(\psi^u_1,\ldots, \psi^u_n) \}
$$
is satisfiable at $M_t/b_1$. But since the set in \eqref{E:set6}
was chosen as an arbitrary subset of $\mathbb{T}$, we have that
every finite subset of the set
$$
\{ S_1(a_1, x_2), \pi^m_2Sx \} \cup \mathbb{T}
$$
is satisfiable at  $M_t/b_1$. Therefore, by compactness of
first-order logic, this set is consistent with $Th(M_t/b_1)$ and,
by $\omega$-saturation of both $M_1$ and $M_2$, it must be
satisfied in $M_t/b_1$ by some $b_2,\ldots, b_{m+1}  \in U_t$. So
for any such $b_2,\ldots, b_{m+1}$ we will have both $\pi^m_1S^tb$
and, moreover
$$
b_{m+ 1} \models_t \mathbb{T}.
$$

Thus, by independence of truth at a sequence of elements from the
choice of free variables in a formula, we will also have
$$
a_{m+1} \models_t \{ \mu^-(\psi_1,\ldots, \psi_n)(x) \mid
\mu^-(\psi_1,\ldots, \psi_n) \in \mathbb{T} \}.
$$
This means that
$$
\{ \langle a_{m+1},b_{m+1}\rangle \} \in Rel(\{
\mu^-(\psi_1,\ldots,\psi_n)\mid \psi \in
\mathcal{L}^\Theta_x(\mathbb{M}) \},M_1, M_2).
$$
Whence by the definition of $A_2$ we get that
$a_{m+1}\mathrel{A_2}b_{m+1}$, and thus that condition
\eqref{E:cond-A} for $\langle C,A_2, B\rangle$ is satisfied. So we
conclude that $\langle C,A_2, B\rangle \in [\mu](\{ A \})$.
\end{proof}

\begin{corollary}\label{L:sat}
Let $\mathcal{L}^\Theta_x(\mathbb{M})$ be a standard $x$-fragment
of the correspondence language, such that $\mathbb{M} = \{
\mu_1,\ldots, \mu_s \} \supseteq \{ \wedge, \vee, \top, \bot \}$,
and let $M_1, M_2$ be saturated models. Then binary relation $A
\in W(M_1,M_2)$ such that

$$
A = \bigcup Rel(\mathcal{L}^\Theta_x(\mathbb{M}),M_1, M_2)
$$
is an $(\mathcal{L}^\Theta_x(\mathbb{M}), M_1, M_2)$-asimulation,
whenever $A$ is non-empty.
\end{corollary}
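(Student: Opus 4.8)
The plan is to verify directly the two requirements that Definition \ref{D:asim} imposes on the relation $A = \bigcup Rel(\mathcal{L}^\Theta_x(\mathbb{M}), M_1, M_2)$: first, that $A \in Rel(\mathcal{L}^\Theta_x(\emptyset), M_1, M_2)$, and second, that for every $\mu_i \in \mathbb{M}$ there are witnesses $A_1, \ldots, A_{\delta(\mu_i)}$ with $\langle A_1, \ldots, A_{\delta(\mu_i)}, A\rangle \in [\mu_i](\{ A \})$. Non-emptiness of $A$ is granted by hypothesis, so it need not be argued.

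For the first requirement I would use the elementary fact that invariance w.r.t. a relation is preserved under arbitrary unions: if $\langle p, q\rangle$ lies in a union of relations each invariant w.r.t. $\varphi(x)$, then it lies in one of them, and the invariance of that member carries the truth of $\varphi(x)$ across. Hence $A$, being the union of all relations invariant w.r.t. the whole fragment $\mathcal{L}^\Theta_x(\mathbb{M})$, is itself invariant w.r.t. $\mathcal{L}^\Theta_x(\mathbb{M})$; in other words, $A$ is the maximum element of $Rel(\mathcal{L}^\Theta_x(\mathbb{M}), M_1, M_2)$ and in particular lies in $W(M_1, M_2)$. Since $\mathcal{L}^\Theta_x(\emptyset) \subseteq \mathcal{L}^\Theta_x(\mathbb{M})$, invariance w.r.t. the larger set yields invariance w.r.t. the smaller, so $A \in Rel(\mathcal{L}^\Theta_x(\emptyset), M_1, M_2)$, as needed.

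For the second requirement I fix $\mu_i$ and split on $\delta(\mu_i)$. When $\delta(\mu_i) \geq 1$ the argument is a one-line appeal to Lemma \ref{L:mu-sat} with $\mu := \mu_i$ and $B := A$. The sole hypothesis to discharge is $A \in Rel(\{ \mu_i(\psi_1, \ldots, \psi_n)\mid \psi_1, \ldots, \psi_n \in \mathcal{L}^\Theta_x(\mathbb{M}) \}, M_1, M_2)$; but each such $\mu_i(\psi_1, \ldots, \psi_n)$ belongs to $\mathcal{L}^\Theta_x(\mathbb{M})$ because the fragment is closed under applications of $\mu_i \in \mathbb{M}$, and $A$ is invariant w.r.t. the entire fragment by the previous paragraph. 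Lemma \ref{L:mu-sat} then delivers some $C \in \{ A \} \cup [\mu_i^0](\{ A \})$ with $\langle C, A_2, \ldots, A_{\delta(\mu_i)}, A\rangle \in [\mu_i](\{ A \})$, and setting $A_1 := C$ gives precisely the tuple demanded by Definition \ref{D:asim}.

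The degree-$0$ connectives must be handled separately, since Lemma \ref{L:mu-sat} assumes $\delta(\mu) \geq 1$. Here the required tuple collapses to the single relation $A$, so the condition reads $A \in [f](\{ A \})$ with $f$ the propositional core of $\mu_i$. If $f$ is constant or non-constant monotone, then $[f](\{ A \})$ equals $W(M_1, M_2)$ or $\{ A \}$ and the membership is immediate. The delicate case, which I expect to be the main obstacle, is $f$ anti-monotone or rest, for then $[f](\{ A \})$ is $\{ A^{-1} \}$ or $\{ A \cap A^{-1} \}$ and membership forces $A = A^{-1}$. To obtain this I would note, via Lemma \ref{L:Boolean}.4 (resp. Lemma \ref{L:Boolean}.7), that applying $\mu_i$ to formulas drawn from $\{ \psi, \top, \bot \}$ produces $\neg\psi$ up to equivalence for every $\psi$ in the fragment; since $\top, \bot, \mu_i \in \mathbb{M}$, the fragment is thereby closed under negation. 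Consequently a singleton $\{ \langle p, q\rangle \}$ is invariant w.r.t. the fragment exactly when $p$ and $q$ satisfy the same fragment-formulas, a symmetric condition, so that $\langle p, q\rangle \in A$ implies $\langle q, p\rangle \in A$; hence $A = A^{-1}$ and $A \cap A^{-1} = A$. This secures $A \in [f](\{ A \})$ in the remaining cases, and the verification of Definition \ref{D:asim} is complete.
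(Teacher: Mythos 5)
Your proposal is correct and follows essentially the same route as the paper's own proof: appeal to Lemma \ref{L:mu-sat} with $B := A$ for connectives of positive degree, and for degree~$0$ a case split on the Boolean type of the core, using Lemma \ref{L:Boolean}.4 and \ref{L:Boolean}.7 to show the fragment is closed under negation and hence $A = A^{-1} = A \cap A^{-1}$. The only difference is that you spell out the (easy) verification that $A \in Rel(\mathcal{L}^\Theta_x(\emptyset),M_1,M_2)$ and the hypothesis of Lemma \ref{L:mu-sat}, which the paper simply asserts.
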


\begin{proof}
By conditions of the lemma we have that $A$ is non-empty and that
$A \in Rel(\mathcal{L}_x(\emptyset),M_1, M_2)$. So we only need to
show that for every $i$ such that $1 \leq i \leq s$ there exist
$A_1,\ldots, A_{\delta(\mu_i)}$ such that
$$
\langle A_1,\ldots, A_{\delta(\mu_i)}, A\rangle \in [\mu](\{ A
\}).
$$
Well, given that by the assumption of the lemma we clearly have
$$
A \in Rel(\mathcal{L}^\Theta_x(\mathbb{M}),M_1, M_2)
$$
then, assuming $\delta(\mu_i) > 0$, it follows from Lemma
\ref{L:mu-sat} that if $\mu_i^0,\ldots, \mu_i^{\delta(\mu_i) - 1}$
is the set of ancestors of $\mu$ and for $1 \leq j \leq
\delta(\mu_i)$ binary relation $A_j \in W(M_1, M_2)$ is such that
$$
A_j = \bigcup Rel(\{ \mu^{j-1}(\psi_1,\ldots, \psi_n)\mid
\psi_1,\ldots \psi_n \in \mathcal{L}^\Theta_x(\mathbb{M}) \},M_1,
M_2),
$$
then for some $C \in \{ A \} \cup [\mu^0](\{ A \})$ the tuple
$\langle C,A_2,\ldots, A_{\delta(\mu)}, A\rangle$ is in $[\mu](\{
A \})$.

On the other hand, if $\delta(\mu_i) = 0$, then if $\mu_i$ defines
$\top$ or $\bot$, we certainly have $A \in W(M_1, M_2) =
[\mu_i](A)$. If $\mu_i$ defines a non-constant Boolean monotone
function, then we have $A \in \{ A \} = [\mu_i](A)$. Finally, if
$\mu_i$ defines either a non-constant anti-monotone function or a
rest function, then, in presence of $\wedge, \vee, \top$, and
$\bot$, one can show, using respectively either Lemma
\ref{L:Boolean}.4 or Lemma \ref{L:Boolean}.7, that for every $\psi
\in \mathcal{L}^\Theta_x(\mathbb{M})$, formula $\neg\psi$ is also
in $\mathcal{L}^\Theta_x(\mathbb{M})$. Therefore, we have:
$$
A = \bigcup Rel(\mathcal{L}^\Theta_x(\mathbb{M}),M_1, M_2) =
\bigcup Rel(\{\neg\psi\mid \psi\in\mathcal{L}^\Theta_x(\mathbb{M})
\},M_1, M_2) = A^{-1},
$$
so that we get
$$
A = A^{-1} = A \cap A^{-1}.
$$
But, since $[\mu_i](A)$ is either $\{ A^{-1} \}$ or $\{ A \cap
A^{-1} \}$, in both cases we get $A \in [\mu_i](A)$.
\end{proof}

\section{The main result}\label{S:Main}

Thus far we have only dealt with asimulations ``locally'', i.e. as
subsets of $W(M_1,M_2)$. We now give the global definition of
asimulation as a class of binary relations:

\begin{definition}
$1$. Let $\mathcal{L}^\Theta_x(\mathbb{M})$ be a standard
fragment. Then the class
$A\sigma(\mathcal{L}^\Theta_x(\mathbb{M}))$ defined as follows:

$$
A\sigma(\mathcal{L}^\Theta_x(\mathbb{M})) := \bigcup\{
A\sigma(\mathcal{L}^\Theta_x(\mathbb{M}), M_1, M_2)\mid M_1, M_2
\text{ are }\Theta\text{-models}\}
$$
is called the class of
$\mathcal{L}^\Theta_x(\mathbb{M})$-asimulations.

$2$. We say that a formula $\varphi(x)$ is invariant w.r.t.
$\mathcal{L}^\Theta_x(\mathbb{M})$-asimulations iff it is
invariant w.r.t. to the class
$A\sigma(\mathcal{L}^\Theta_x(\mathbb{M}))$.
\end{definition}

We are now able to formulate and prove our main result:
\begin{theorem}\label{L:final}
Let $\varphi(x)$ be a formula in the correspondence language such
that $\Sigma_\varphi \subseteq \Theta$ and let $\{ \wedge, \vee,
\bot,\top \} \subseteq \mathbb{M}$. Then $\varphi(x)$ is logically
equivalent to a formula in a standard fragment
$\mathcal{L}^\Theta_x(\mathbb{M})$ iff $\varphi(x)$ is invariant
w.r.t. $\mathcal{L}^\Theta_x(\mathbb{M})$-asimulations.
\end{theorem}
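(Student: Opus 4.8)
The plan is to split the biconditional into its two directions, with the left-to-right (``easy'') direction being essentially immediate from Corollary \ref{L:cor} and the right-to-left (``hard'') direction following the classical Van Benthem template, adapted to the asymmetric setting of asimulations.

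For the easy direction, suppose $\varphi(x)$ is logically equivalent to some $\psi(x) \in \mathcal{L}^\Theta_x(\mathbb{M})$. I would simply invoke Corollary \ref{L:cor}: for any two $\Theta$-models $M_1, M_2$ and any $A \in A\sigma(\mathcal{L}^\Theta_x(\mathbb{M}), M_1, M_2)$, the formula $\varphi(x)$ is invariant w.r.t. $A$. Since $A\sigma(\mathcal{L}^\Theta_x(\mathbb{M}))$ is by definition the union of all such local asimulation classes, $\varphi(x)$ is invariant w.r.t. the entire class, as required. Note this uses nothing about standardness beyond what Corollary \ref{L:cor} already supplies.

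For the hard direction, assume $\varphi(x)$ is invariant w.r.t. $\mathcal{L}^\Theta_x(\mathbb{M})$-asimulations. I would set $\Gamma(x) = \{\psi(x) \in \mathcal{L}^\Theta_x(\mathbb{M}) \mid \varphi(x) \models \psi(x)\}$ and aim to show $\Gamma(x) \models \varphi(x)$; combined with the trivial $\varphi(x) \models \Gamma(x)$, this yields, by compactness and closure of the fragment under $\wedge$ (since $\wedge \in \mathbb{M}$), a finite conjunction $\bigwedge \Gamma_0 \in \mathcal{L}^\Theta_x(\mathbb{M})$ equivalent to $\varphi(x)$. To establish $\Gamma(x) \models \varphi(x)$, take any pointed $\Theta$-model $M_2, b \models \Gamma(x)$. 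I would first verify that the set $\{\varphi(x)\} \cup \{\neg\psi(x) \mid \psi \in \mathcal{L}^\Theta_x(\mathbb{M}),\ M_2, b \not\models \psi\}$ is finitely satisfiable: otherwise compactness plus closure under $\vee$ would produce a fragment formula $\psi_1 \vee \ldots \vee \psi_k$ entailed by $\varphi(x)$, hence in $\Gamma(x)$, yet false at $(M_2, b)$, contradicting $M_2, b \models \Gamma(x)$. Since all these formulas are $\Theta$-formulas (using $\Sigma_\varphi \subseteq \Theta$), the set has a $\Theta$-model $M_1, a$ with $M_1, a \models \varphi(x)$ and $Tr(\mathcal{L}^\Theta_x(\mathbb{M}), M_1, a) \subseteq Tr(\mathcal{L}^\Theta_x(\mathbb{M}), M_2, b)$.

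The concluding step manufactures an actual asimulation linking $a$ and $b$. I would pass, via Lemma \ref{L:ext}, to $\omega$-saturated elementary extensions $M_1'$ and $M_2'$; truth of all formulas is preserved, so $M_1', a \models \varphi(x)$ and the inclusion of fragment-theories persists. Setting $A = \bigcup Rel(\mathcal{L}^\Theta_x(\mathbb{M}), M_1', M_2')$, the inclusion $Tr(\mathcal{L}^\Theta_x(\mathbb{M}), M_1', a) \subseteq Tr(\mathcal{L}^\Theta_x(\mathbb{M}), M_2', b)$ says precisely that the singleton $\{\langle a, b\rangle\}$ belongs to $Rel(\mathcal{L}^\Theta_x(\mathbb{M}), M_1', M_2')$, so $a \mathrel{A} b$ and $A$ is non-empty. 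Corollary \ref{L:sat} then certifies that $A$ is an $(\mathcal{L}^\Theta_x(\mathbb{M}), M_1', M_2')$-asimulation, hence a member of $A\sigma(\mathcal{L}^\Theta_x(\mathbb{M}))$. Applying invariance of $\varphi(x)$ w.r.t. this class to $a \mathrel{A} b$ and $M_1', a \models \varphi(x)$ gives $M_2', b \models \varphi(x)$, and therefore $M_2, b \models \varphi(x)$. I expect the main conceptual point to be exactly this last step: the heavy technical work has been pushed into Corollary \ref{L:sat} (and through it Lemma \ref{L:mu-sat}), whose hypotheses are met only after moving to $\omega$-saturated models. Saturation is what upgrades the maximal fragment-preserving relation from something that merely respects atomic data into a genuine asimulation; the consistency argument and the final compactness reduction are then routine given closure of the fragment under $\wedge$ and $\vee$.
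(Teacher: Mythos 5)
Your proposal is correct and follows essentially the same route as the paper's proof: the easy direction via Corollary \ref{L:cor}, and the hard direction via the consequence set $Con(\varphi(x))$, a compactness argument producing two pointed models linked by an inclusion of fragment-theories, passage to $\omega$-saturated elementary extensions by Lemma \ref{L:ext}, and Corollary \ref{L:sat} to certify that the maximal fragment-preserving relation is an asimulation through which invariance transfers $\varphi(x)$. The only deviations are cosmetic: you obtain the linked pair by directly satisfying $\{\varphi(x)\}$ together with the negations of all fragment formulas false at the target point, whereas the paper reaches the same conclusion by a diagonal choice of separating formulas over countable models plus L\"{o}wenheim--Skolem, and you swap which of the two pointed models carries $\varphi(x)$ and which carries $Con(\varphi(x))$ (your assignment in fact matches the official direction of Definition \ref{D:invariance} more literally than the paper's own write-up).
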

\begin{proof}
The left-to-right direction of the Theorem immediately follows
from Corollary \ref{L:cor}. We consider the other direction.

Assume that $\varphi(x)$ is invariant w.r.t.
$\mathcal{L}^\Theta_x(\mathbb{M})$-asimulations. We may assume
that $\varphi(x)$ is satisfiable, for $\bot$ is clearly invariant
with respect to $\mathcal{L}^\Theta_x(\mathbb{M})$-asimulations
and we have $\bot \in \mathbb{M}$. Throughout this proof, we will
write $Con(\varphi(x))$ for the following set:
$$
\{ \psi(x) \in \mathcal{L}^\Theta_x(\mathbb{M}) \mid \varphi(x)
\models \psi(x) \}
$$

Our strategy will be to show that $Con(\varphi(x)) \models
\varphi(x)$. Once this is done, we will apply compactness of
first-order logic and conclude that $\varphi(x)$ is equivalent to
a finite conjunction of formulas in
$\mathcal{L}^\Theta_x(\mathbb{M})$ and hence, since we have
$\wedge \in \mathbb{M}$, to a formula in
$\mathcal{L}^\Theta_x(\mathbb{M})$.

To show this, take any $\Theta$-model $M_1$ and  $a \in U_1$ such
that $a \models_1 Con(\varphi(x))$. Then, of course, we also have
$Con(\varphi(x)) \subseteq Tr(\mathcal{L}^\Theta_{x}(\mathbb{M}),
M_1, a)$. Such a model exists, because $\varphi(x)$ is
satisfiable, and $Con(\varphi(x))$ will be satisfied in any model
satisfying $\varphi(x)$. Then we can also choose a $\Theta$-model
$M_2$ and $b \in U_2$ such that $b \models_2 \varphi(x)$ and

\begin{equation}\label{E:subset}
Tr(\mathcal{L}^\Theta_{x}(\mathbb{M}), M_2, b) \subseteq
Tr(\mathcal{L}^\Theta_{x}(\mathbb{M}), M_1, a),
\end{equation}

so that we get

\begin{equation}\label{E:inclusion}
\{ \langle a,b\rangle \} \in
Rel(\mathcal{L}^\Theta_{x}(\mathbb{M}), M_1, M_2).
\end{equation}

For suppose otherwise. Then for any $\Theta$-model $M$ such that
$U \subseteq \omega$ and any $c \in U$ such that $M, c \models
\varphi(x)$ we can choose a formula $\chi_{(M, c)}$ in
$\mathcal{L}^\Theta_{x}(\mathbb{M})$ such that $\chi_{(M, c)}$ is
in $Tr(\mathcal{L}^\Theta_{x}(\mathbb{M}), M, c)$ but not in
$Tr(\mathcal{L}^\Theta_{x}(\mathbb{M}), M_1, a)$. Then consider
the set
\[
S = \{\,\varphi(x)\,\} \cup \{\,\neg \chi_{(M, c)}\mid M, c
\models \varphi(x)\,\}
\]
Let $\{\,\varphi(x), \neg\chi_1\ldots , \neg\chi_q\,\}$ be a
finite subset of this set. If this set is unsatisfiable, then we
must have $\varphi(x) \models \chi_1\vee\ldots \vee \chi_q$, but
then we will also have
$$
\chi_1\vee\ldots \vee \chi_q \in Con(\varphi(x)) \subseteq
Tr(\mathcal{L}^\Theta_{x}(\mathbb{M}), M_1, a),
$$
and hence $\chi_1\vee\ldots \vee \chi_q$ will be true at $(M_1,
a)$. But then at least one of $\chi_1,\ldots, \chi_q$ must also be
true at $(M_1, a)$, which contradicts the choice of these
formulas. Therefore, every finite subset of $S$ is satisfiable,
and, by compactness, $S$ itself is satisfiable as well. But then,
by the L\"{o}wenheim-Skolem property, we can take a
$\Sigma_\varphi$-model $M'$ such that $U' \subseteq \omega$ and $g
\in U'$ such that $S$ is true at $(M',g)$ and this will be a model
for which we will have both $M', g \models \chi_{(M',g)}$ by
choice of $\chi_{(M',g)}$ and $M',g \not\models \chi_{(M',g)}$ by
satisfaction of $S$, a contradiction.

Therefore, we will assume in the following that some
$\Theta$-model $M_2$ and some $b \in U_2$ are such that $a
\models_1 Con(\varphi(x))$, $b \models_2 \varphi(x)$, and,
moreover \eqref{E:subset} and thus \eqref{E:inclusion} are
satisfied. According to Lemma \ref{L:ext}, there exist
$\omega$-saturated elementary extensions $M'$, $M''$ of $M_1$ and
$M_2$, respectively. We have:
\begin{align}
&M_1, a \models \varphi(x) \Leftrightarrow M', a \models
\varphi(x)\label{E:m1}\\
&M'', b \models \varphi(x)\label{E:m2}
\end{align}
Also, since $M_1$, $M_2$ are elementarily equivalent to $M'$,
$M''$, respectively, we have by \eqref{E:subset}:
\[
Tr(\mathcal{L}^\Theta_{x}(\mathbb{M}), M'', b) =
Tr(\mathcal{L}^\Theta_{x}(\mathbb{M}), M_2, b) \subseteq
Tr(\mathcal{L}^\Theta_{x}(\mathbb{M}), M, a) =
Tr(\mathcal{L}^\Theta_{x}(\mathbb{M}), M', a).
\]
Therefore, we have
$$
\{ \langle a,b\rangle \} \in
Rel(\mathcal{L}^\Theta_{x}(\mathbb{M}), M', M''),
$$
and further
$$
\langle a,b\rangle \in A = \bigcup
Rel(\mathcal{L}^\Theta_{x}(\mathbb{M}), M', M'').
$$
Therefore, $A$ is non-empty, and by $\omega$-saturation of $M'$,
$M''$ and Corollary \ref{L:sat}, $A$ is a
$\mathcal{L}^\Theta_{x}(\mathbb{M})$-asimulation. But then, by
\eqref{E:m2} and invariance of $\varphi(x)$ w.r.t.
$A\sigma(\mathcal{L}^\Theta_x(\mathbb{M}))$, we get that $M', a
\models \varphi(x)$, and further, by \eqref{E:m1} we conclude that
$M_1, a \models \varphi(x)$. Therefore, $\varphi(x)$ in fact
follows from $Con(\varphi(x))$.
\end{proof}

\section{Conclusion}\label{S:Con}

The generalization of the original result by J. van Benthem
achieved by Theorem \ref{L:final}, even if not exactly sweeping,
appears to be reasonably wide. Not only does it include all the
guarded connectives of degree $\leq 1$, but it also covers the
group of regular connectives of degree $2$, which seems to be rich
enough for many practical purposes. For example, given that among
the binary Boolean functions there are no functions which are both
$TFT$ and $FTF$, it follows that for every non-constant binary
Boolean function $f$ there exists at least one regular guarded
connective of degree $2$ with $f$ as a core, which consequently
can be handled by asimulations.

We would like to keep this paper within reasonable space limits
and so only presented here the main semantic characterization
result without indicating the standard ramifications that these
type of results tend to have. However, we do not see any obstacles
to getting these ramifications proved by an appropriate
modification of the above proofs. Thus, one can rather
straightforwardly prove for an arbitrary standard fragment
$\mathcal{L}^\Theta_{x}(\mathbb{M})$ that a formula $\varphi(x)$
is equivalent to a formula $\psi(x) \in
\mathcal{L}^\Theta_{x}(\mathbb{M})$ \emph{over a first-order
definable class $\varkappa$ of models} iff $\varphi(x)$ is
invariant w.r.t. to the class
$$
\bigcup\{ A\sigma(\mathcal{L}^\Theta_x(\mathbb{M}), M_1, M_2)\mid
M_1, M_2 \in \varkappa \},
$$
arguing along the lines of \cite[Theorem 7]{Ol13}, \cite[Theorem
6]{Ol14}, or \cite[Theorem 5]{Ol15}.

In much the same way, there seem to be no principal difficulties
in obtaining a `parametrized' version of Theorem \ref{L:final}
similar to \cite[Theorem 2]{Ol13}, \cite[Theorem 2]{Ol14}, or
\cite[Theorem 1]{Ol15}.

Another limitation of the above presentation is the finite
cardinality of $\mathbb{M}$ in the standard fragment
$\mathcal{L}^\Theta_{x}(\mathbb{M})$. It appears that a
generalization of the above proofs at least to reasonably small
infinite cardinalities is possible and straightforward.

One could also think of generalizing Theorem \ref{L:final} onto
the connectives guarded by relations of arity greater than $2$.
This can be interesting in connection with the possible
achievement of model-theoretic characterizations of e.g. sets of
standard translations induced by relevance logics. This problem
does not seem to be very difficult, although we cannot provide any
such generalization offhand.

Also, our main result can be easily extended onto non-standard
guarded fragments of special form. Generally speaking, for every
given guarded connective $\mu \in \mathbb{M}$ one must also demand
that $\mu^1,\ldots, \mu^{\delta(\mu) - 1} \in \mathbb{M}$. For
regular guarded connectives this condition can be weakened in
various ways. For instance, when dealing with modalities it is
sufficient that the series of $\mu$'s ancestors present in
$\mathbb{M}$ starts with $\mu^2$ and every gap in it contains at
most $1$ ancestor.

However, the most natural and tricky question is whether Theorem
\ref{L:final} in its most general form can be extended onto
guarded fragments, containing at least some types of non-standard
connectives, that is to say, without adding any other conditions
on the form of these fragments. It is not exactly clear  how many
non-standard guarded connectives of degree $2$ can be taken on
board in this way, although we conjecture that their number should
not be very big. Extension of Theorem \ref{L:final} onto the
degrees exceeding $2$, if possible at all, appears to require
methods different from the ones employed in the present paper.
Indeed, when considering guarded connectives of higher degrees,
one can no longer rely on distributivity properties which turned
out to be pivotal for our arguments about the guarded connectives
of degree $2$.

\section{Acknowledgements}

To be inserted

\bibliographystyle{habbrv}

\bibliography{intbib}

 }
\end{document}